\documentstyle[11pt,amssymb,amsmath,amscd,amsbsy,amsfonts,amsthm,color]{article}

\input xy
\xyoption{all}

\pagestyle{plain}
\textwidth=17.5cm
\oddsidemargin=-1cm
\evensidemargin=-1cm
\topmargin=-1cm
\textheight=23cm

\newtheorem{thm}{Theorem}[section]
\newtheorem{lem}[thm]{Lemma}
\newtheorem{prop}[thm]{Proposition}
\newtheorem{cor}[thm]{Corollary}
\newtheorem{rem}[thm]{Remark}
\newtheorem{dfn}[thm]{Definition}

\DeclareMathOperator{\Z}{\mathbb{Z}}
\DeclareMathOperator{\Gm}{{\mathbb G}_m}

\DeclareMathOperator{\DM}{\mathcal{DM}}

\DeclareMathOperator{\X}{\mathfrak{X}}
\DeclareMathOperator{\A}{\mathcal{A}}

\DeclareMathOperator{\Pa}{\mathrm{P}}

\DeclareMathOperator{\bock}{\mathrm{B}}

\DeclareMathOperator{\Hom}{\mathrm{Hom}}

\DeclareMathOperator{\Ima}{im}
\DeclareMathOperator{\coker}{coker}
\DeclareMathOperator{\seb}{SB}

\title{\textsc{A Serre-type spectral sequence for motivic cohomology}}
\author{Fabio Tanania}
\date{}

\begin{document}

\maketitle

\begin{abstract}
In this paper, we construct and study a Serre-type spectral sequence for motivic cohomology associated to a map of bisimplicial schemes with motivically cellular fiber. Then, we show how to apply it in order to approach the computation of the motivic cohomology of the Nisnevich classifying space of projective general linear groups. This naturally yields an explicit description of the motive of a Severi-Brauer variety in terms of twisted motives of its \v{C}ech simplicial scheme.
\end{abstract}

\section{Introduction}
\label{sec:introduction}

One of the greatest features of topology is the plenty of powerful computational tools. The Serre spectral sequence associated to a fibration is a notable example that allows to extract information about the cohomology of the total space, once known the cohomology of the base and of the fiber. As a particular case, when the fiber is a sphere, the Serre spectral sequence gives back the Gysin long exact sequence associated to a sphere bundle.

The development of motivic homotopy theory made it possible to efficiently import topological techniques into the algebro-geometric world. This translation is usually not straightforward nor even univoque (e.g. unlike the topological picture, in motivic homotopy theory we have several classifying spaces, each one corresponding to a different Grothendieck topology). Most of the times, in fact, it is not clear how the topologically inspired tool should look like in the motivic world and what assumptions are needed in order to make the translation successful.

In this paper, we would like to propose the construction of a Serre-type spectral sequence for motivic cohomology. This spectral sequence arises from a Postnikov system in a certain triangulated category of motives associated to a morphism of bisimplicial schemes whose fiber is motivically cellular, i.e. a direct sum of Tate motives. As in topology, our Serre spectral sequence reconstructs the motivic cohomology of the total space out of the motivic cohomology of the base and the cellular structure of the fiber. It is a natural generalisation of the Gysin long exact sequence that was first introduced by Smirnov and Vishik in \cite{smirnov.vishik} for computing the motivic cohomology of the Nisnevich classifying space of orthogonal groups, and subsequently studied by the author in \cite{tanania.b} for the general case of morphisms of simplicial schemes with reduced Tate fiber (the motivic analogues of sphere bundles). For constructing our spectral sequence, we need to work with bisimplicial schemes instead of simplicial ones. Indeed, we want to allow simplicial fibers that are in some sense infinite-dimensional. We also want to point out that a motivic Serre spectral sequence of different nature has been recently developed by Asok, D\'eglise and Nagel in \cite{asok}. A significant difference resides in the fact that, while the latter is achieved by filtering with respect to the $\delta$-homotopy $t$-structure whose slices are homotopy modules, in our spectral sequence the slices are, instead, Tate motives. 

The Gysin long exact sequence for motivic cohomology was successfully exploited for computing the motivic cohomology ring of the Nisnevich classifying space $BG$ for some linear algebraic groups $G$, which in turn produced new subtle invariants for $G$-torsors. These new invariants, unlike the usual ones coming from the motivic cohomology of the \'etale classifying space $B_{\acute et}G$, take values in a more informative object, namely the motivic cohomology of the \v{C}ech simplicial scheme of the torsor under study.

More precisely, the Gysin sequence was used for orthogonal groups in \cite{smirnov.vishik}, for unitary groups in \cite{tanania.a} and for spin groups in \cite{tanania.b}. Unfortunately, some algebraic groups are out of its range of application, e.g. the projective general linear group $PGL_n$. The cohomology of its classifying space is notoriously difficult to study even in topology where, neverthless, some important results are known thanks to the use of the Serre spectral sequence. For example, the singular cohomology of $BPU_n$ has been computed in low degrees by Antieau and Williams in \cite{antieau.williams} and by Gu in \cite{gu1}. Moreover, the singular cohomology groups and the Chow groups of the \'etale classifying space of $PGL_p$ for an odd prime $p$ were completely computed by Vistoli in \cite{vistoli}. The Nisnevich classifying space of $PGL_n$ was first studied by Rolle in \cite{rolle}. In this paper, we show how to apply our spectral sequence to obtain some information about the motivic cohomology of $BPGL_n$. Indeed, we use the Serre spectral sequence developed here for the obvious map $BGL_n \rightarrow BPGL_n$ with fiber $B\Gm$ in order to compute the motivic cohomology of $BPGL_n$ in low motivic weights.

As noted before, investigating classifying spaces from a cohomological point of view is essential for the program of classifying torsors of linear algebraic groups. For example, the importance of studying Nisnevich classifying spaces for the classification of quadratic forms was highlighted by Smirnov and Vishik in \cite{smirnov.vishik}. In the case of projective general linear groups, torsors are central simple algebras and a better understanding of the Nisnevich classifying space of $PGL_n$ provides information about them. 

Motivic descriptions of geometric objects related to central simple algebras have been widely investigated in the last decades. For example, Karpenko shows in \cite{karpenko} that the Chow motive of the Severi-Brauer variety associated to a central simple algebra is the direct sum of twisted copies of the motive of the Severi-Brauer variety associated to the underlying division algebra, while the latter is indecomposable. In \cite{kahn.levine}, Kahn and Levine obtain a Postnikov tower in the Voevodsky's triangulated category of motives for the Severi-Brauer variety associated to a division algebra of prime degree. In \cite{shinder}, Shinder studies the slice filtration for the motive of the group of units of a division algebra of prime degree, which involves the motive of the associated \v{C}ech simplicial scheme. In this paper, we show how to apply the techniques developed here to obtain a description of the motive of a Severi-Brauer variety associated to any central simple algebra in terms of the respective \v{C}ech simplicial scheme. The latter description also induces a spectral sequence, whose first differential is fully understood, strongly converging to the motivic cohomology of the Severi-Brauer variety starting from the motivic cohomology of its \v{C}ech simplicial scheme.

As we have already pointed out, a complete description of the singular cohomology of $BPU_n$ seems to be still out of reach. Neverthless, in \cite{gu2} and \cite{gu3} Gu has constructed non-trivial torsion classes in the cohomology ring of $BPU_n$ and in the Chow ring of $B_{\acute et}PGL_n$ over the complex numbers, by using Steenrod operations and the cycle class map going from Chow groups to singular cohomology. These important results shed new light on the structure of these rings. In this paper, by following the lead of \cite{gu2} and \cite{gu3}, we show that there are similar non-trivial torsion classes in the motivic cohomology of the Nisnevich classifying space of $PGL_n$. By exploiting the homomorphism in motivic cohomology induced by the canonical map $BPGL_n \rightarrow B_{\acute et}PGL_n$ (that replaces the cycle class map over general fields), we then generalise Gu's results on the torsion classes in the Chow ring of $B_{\acute et}PGL_n$ to fields of characteristic not dividing $n$ containing a primitive $n$th root of unity.\\

\textbf{Outline.} In Section 2, we report the main notations we need in this paper. In Section 3, we recall a few things about Postnikov systems in triangulated categories and induced spectral sequences. Section 4 is devoted to the introduction of the triangulated category of motives over a bisimplicial scheme that is modeled on the one constructed for simplicial schemes by Voevodsky in \cite{voevodsky.simplicial}. Sections 5 and 6 contain the main results of this paper, i.e. the construction of the Serre spectral sequence for motivic cohomology (see Theorem \ref{Serre2}) and the investigation of its multiplicative structure (see Theorem \ref{mult}). In Section 7, we apply the latter to compute the motivic cohomology of the Nisnevich classifying space of $PGL_n$ in low motivic weights (see Theorem \ref{bpg}). Section 8 provides a Postnikov system for the motive of a Severi-Brauer variety (see Proposition \ref{pssb}). Finally, in Section 9, we find torsion classes in the motivic cohomology of $BPGL_n$ generalising some results from \cite{gu2} and \cite{gu3} (see Corollary \ref{gentor}).\\

\textbf{Acknowledgements.} I would like to thank Alexander Vishik and Olivier Haution for helpful conversations on the topic of this paper. I am also grateful to the referee for very useful comments that helped improving the overall exposition, and fixing some mistakes.
 
 \section{Notation}
 \label{sec:notation}
 
 Here we fix some notations we use throughout this paper.
 
 \begin{tabular}{c|c}
 	$k$ & infinite perfect field\\
 	$R$ & commutative ring with identity\\
 	$Y_{\bullet,\bullet}$ & smooth bisimplicial scheme over $k$\\
 	$d(Y_{\bullet,\bullet})$ & diagonal simplicial scheme of $Y_{\bullet,\bullet}$\\
 	$\DM^-_{eff}(k,R)$ & triangulated category of motives over $k$ with $R$-coefficients \\
 	$\DM^-_{eff}(Y_{\bullet,\bullet},R)$ & triangulated category of motives over $Y_{\bullet,\bullet}$ with $R$-coefficients\\
 	$\DM_{coh}^-(Y_{\bullet,\bullet},R)$ & localizing subcategory of coherent motives over $Y_{\bullet,\bullet}$ with $R$-coefficients\\
 	$T$ & unit object in $\DM^-_{eff}(k,R)$\\
 	$H^{**}(-,R)$ & motivic cohomology with $R$-coefficients\\
 	$H^{**}(-)$ & motivic cohomology with $\Z$-coefficients\\
 	$CH^*(-)$ & Chow groups, i.e. $CH^q(-) \cong H^{2q,q}(-)$\\
 	${\mathcal H}_s(k)$ & simplicial homotopy category over $k$\\
 	${\mathcal H}(k)$ & $A^1$-homotopy category over $k$\\
 	$BG_{\bullet}$ & Nisnevich classifying space of the simplicial algebraic group $G_{\bullet}$\\
 	$B_{\acute et}G$ & \'etale classifying space of the algebraic group $G$\\
 	$PGL_n$ & projective general linear group\\
 	$A$ & central simple algebra over $k$\\
 	$\seb(A)$ & Severi-Brauer variety associated to $A$\\
 	$\X_A$ & motive of the \v{C}ech simplicial scheme of $\seb(A)$\\
 \end{tabular}\\
 
 \section{Some general facts about spectral sequences}\label{spec}
 
 Let us start by recalling some well known facts about Postnikov systems in triangulated categories, spectral sequences associated to them and convergence issues (see \cite{boardman}, \cite{gelfand.manin} and \cite{mccleary}).
 
 Throughout this section, we denote by $\mathcal{C}$ a triangulated category, by $\A$ an abelian category and by $H:\mathcal{C} \rightarrow \A$ a cohomological functor, i.e. an additive contravariant functor sending distinguished triangles in $\mathcal{C}$ into long exact sequences in $\A$.
 
 \begin{dfn}\label{ec}
 	An exact couple in $\A$ is a triangle
 	$$
 	\xymatrix{
 		D  \ar@{->}[rr]^{i} & & D \ar@{->}[dl]^{j}\\
 		& E \ar@{->}[ul]^{k}&
 	}
 	$$
 	which is exact at each vertex.
 \end{dfn}
 
 The morphism $d$ defined as the composition $jk$ is a differential, i.e. $d^2=0$. Set $E'=\ker(d)/\Ima(d)$ and $D'=\Ima(i)$. One can define morphisms $j':D' \rightarrow E'$ and $k':E' \rightarrow D'$ respectively by $j'(i(x))=j(x)$ and $k'([y])=k(y)$ for any $x \in D$ and $y \in \ker(d)$. The triangle obtained in this way
 $$
 \xymatrix{
 	D'  \ar@{->}[rr]^{i} & & D' \ar@{->}[dl]^{j'}\\
 	& E' \ar@{->}[ul]^{k'}&
 }
 $$
 is again an exact couple, called the derived couple. Reiterating this construction, one gets a sequence of objects $E_r$ in $\A$, endowed with differentials $d_r$, each of which is the homology of the previous one. More precisely, we can give the following definition.
 
 \begin{dfn}
 	The sequence $\{E_r,d_r\}_{r \geq 1}$ constructed inductively by
 	$$E_r=\ker(d_{r-1})/\Ima(d_{r-1})$$
 	is called the spectral sequence associated to the exact couple in Definition \ref{ec}.
 \end{dfn}
 
 In practical situations, one often encounters bigraded exact couples, which naturally give rise to bigraded spectral sequences $\{E_r^{s,t},d_r^{s,t}\}_{r \geq 1}$. We now provide the definition of the limit page of a bigraded spectral sequence.
 
 \begin{dfn}
 	Let $\{E_r^{s,t},d_r^{s,t}\}_{r \geq 1}$ be a bigraded spectral sequence and suppose there is an integer $r(s,t)$ such that $E_r^{s,t} \cong E_{r(s,t)}^{s,t}$ for any $r \geq r(s,t)$, then we say that the spectral sequence abuts to $E_{\infty}^{s,t}=E_{r(s,t)}^{s,t}$.
 \end{dfn}
 
 At this point, let us recall some notions about filtrations and covergence of spectral sequences from \cite{boardman}.
 
 An increasing filtration of an object $G$ in $\A$ is a diagram of the following shape 
 $$\dots \hookrightarrow F^1 \hookrightarrow F^2 \hookrightarrow \dots \hookrightarrow F^m \hookrightarrow F^{m+1} \hookrightarrow \dots \hookrightarrow G.$$
 
 \begin{dfn}
 	The increasing filtration $\{F^m\}_{m \in \Z}$ of  $G$ is said to be:\\
 	1) exhaustive if $G=\varinjlim_m F^m$;\\
 	2) Hausdorff if $\varprojlim_m F^m=0$;\\
 	3) complete if $\varprojlim^1_m F^m=0$.
 \end{dfn}
 
 In practice, one is often interested in filtrations of graded objects. So, for any $F^s$ in the filtration we denote by $F^{s,t}$ its graded component in degree $t$.
 
 \begin{dfn}
 	A spectral sequence associated to an exact couple is called:\\
 	1) weakly convergent to $G$ if there exists an increasing filtration of $G$ which is exhaustive and such that $E^{s,t}_{\infty} \cong F^{s,t}/F^{s-1,t}$ for any $s$;\\
 	2) convergent to $G$ if it is weakly convergent and the filtration of $G$ is Hausdorff;\\
 	3) strongly convergent to $G$ if it is weakly convergent and the filtration of $G$ is complete Hausdorff.
 \end{dfn}
 
 We now recall the definition of Postnikov system in a triangulated category (see \cite{gelfand.manin}). 
 
 \begin{dfn}\label{ps}
 	A Postnikov system for an object $X$ in $\mathcal{C}$ is a diagram
 	$$
 	\xymatrix{
 		\dots \ar@{->}[r] & X_{i+1} \ar@{->}[r] \ar@{->}[d]	 &X_i \ar@{->}[r] \ar@{->}[d] &  \dots \ar@{->}[r]   & X_2 \ar@{->}[r] \ar@{->}[d]	 & X=X_1  \ar@{->}[d]  \\
 		& Y_{i+1} \ar@{->}[ul]^{[1]} &	Y_i \ar@{->}[ul]^{[1]} & & Y_2 \ar@{->}[ul]^{[1]}  &	Y_1 \ar@{->}[ul]^{[1]} 
 	}
 	$$
 	where all the triangles are distinguished triangles in $\mathcal{C}$.
 \end{dfn}
 
 Associated to a Postnikov system one can always construct an exact couple by applying a cohomological functor $H$. More precisely, we have the following bigraded exact couple
 $$
 \xymatrix{
 	D  \ar@{->}[rr]^{i} & & D \ar@{->}[dl]^{j}\\
 	& E \ar@{->}[ul]^{k}&
 }
 $$
 where $D^{s,t}=H^t(X_s): =H(X_s[-t])$, $E^{s,t}=H^t(Y_s): =H(Y_s[-t])$ and the morphisms $i:D^{s,t} \rightarrow D^{s+1,t}$, $j:D^{s,t} \rightarrow E^{s-1,t+1}$ and $k:E^{s,t} \rightarrow D^{s,t}$ are induced by the morphisms in the Postnikov system.
 
 As usual, we obtain an increasing filtration of the object $H^t(X)$ in $\A$ given by 
 $$F^{1,t} \hookrightarrow F^{2,t} \hookrightarrow \dots \hookrightarrow F^{s-1,t} \hookrightarrow F^{s,t} \hookrightarrow \dots \hookrightarrow H^t(X)$$
 where $F^{s,t}=\ker(H^t(X) \rightarrow H^t(X_s))$ and the morphism $H^t(X) \rightarrow H^t(X_s)$ is the one induced by the Postnikov system. Moreover, observe that the filtration $\{F^{s,t}\}_{s \geq 1}$ just introduced is complete Hausdorff, since it is bounded from below, but not necessarily exhaustive.	Anyway, we have the following result that guarantees the strong convergence of the spectral sequence just constructed, provided that a certain condition holds.
 
 \begin{thm}\label{SC}
 	If $\varinjlim_s H^*(X_s) \cong 0$, then the spectral sequence associated to the Postnikov system in Definition \ref{ps} is strongly convergent to $H^*(X)$.
 \end{thm}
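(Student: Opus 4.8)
The plan is to check, for the filtration $\{F^{s,t}\}_s$ of $H^t(X)$ already at hand, the two remaining conditions in the definition of strong convergence: since the completeness and Hausdorff properties have already been recorded (the filtration is bounded below), what is left is exhaustiveness of the filtration together with the isomorphism $E_\infty^{s,t}\cong F^{s,t}/F^{s-1,t}$.

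First I would prove exhaustiveness. The maps $H^t(X)=H^t(X_1)\to H^t(X_s)$ are the canonical maps from the initial term of the direct system $\{H^t(X_s)\}_s$, with transition maps $i$, to its terms, and $F^{s,t}$ is the kernel of the $s$-th one. Because an element of a filtered colimit indexed by the positive integers vanishes exactly when it vanishes at some finite stage, one gets $\bigcup_s F^{s,t}=\ker\!\big(H^t(X)\to\varinjlim_s H^t(X_s)\big)$, and by the hypothesis $\varinjlim_s H^*(X_s)\cong 0$ this is all of $H^t(X)$; hence $H^t(X)=\varinjlim_s F^{s,t}$.

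Next I would identify $E_\infty^{s,t}$ with the associated graded, running the standard analysis of the spectral sequence of the exact couple $D\xrightarrow{i}D\xrightarrow{j}E\xrightarrow{k}D$. Since the Postnikov system is indexed by positive integers, $E^{s',t'}=H^{t'}(Y_{s'})=0$ for $s'\leq 0$, so the differential out of $E_r^{s,t}$ vanishes once $r\geq s$, and the relevant groups of ``cycles'' stabilise, as $k(e)$ cannot be lifted back along $i$ past $H^t(X_1)$. The ``boundaries'' $B_r^{s,t}=j\big(\ker i^{r-1}\big)$ form an increasing chain whose union is $j$ applied to the $i$-power-torsion elements of $H^{t-1}(X_{s+1})$; the hypothesis forces these to be all of $H^{t-1}(X_{s+1})$, so the union equals $j\big(H^{t-1}(X_{s+1})\big)=\ker\!\big(k\colon E^{s,t}\to D^{s,t}\big)$ by exactness. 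Plugging both computations, together with $\Ima k=\ker i$, into the formula for $E_\infty^{s,t}$ identifies it with $\ker\!\big(i\colon H^t(X_s)\to H^t(X_{s+1})\big)\cap\Ima\!\big(i^{s-1}\colon H^t(X)\to H^t(X_s)\big)$, which the map $i^{s-1}$ exhibits as the appropriate graded piece of the filtration $\{F^{s,t}\}$ (with the filtration index matched to the indexing of the exact couple). This, with exhaustiveness and the complete Hausdorff property, yields strong convergence to $H^*(X)$.

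I expect this last step to be the main obstacle. The hypothesis only supplies \emph{pointwise} annihilation of elements of $H^{t-1}(X_{s+1})$ by powers of $i$, not a uniform bound, so one must argue carefully that the $B_r^{s,t}$ genuinely exhaust $\ker k$ in the colimit and, in particular, that each $E_r^{s,t}$ stabilises, so that $E_\infty^{s,t}$ in the sense fixed earlier exists and coincides with $F^{s,t}/F^{s-1,t}$. This is precisely where the full strength of $\varinjlim_s H^*(X_s)\cong 0$ --- rather than only the resulting exhaustiveness of the filtration --- is used.
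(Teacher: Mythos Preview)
The paper does not prove this theorem; its entire proof is the one-line citation ``See \cite[Theorem 6.1]{boardman}.'' So your proposal is a genuine attempt at a direct argument where the paper offers none.

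Your exhaustiveness step is correct and is indeed the essential use of the hypothesis. Your analysis of cycles (stabilising for $r\geq s$) and boundaries (exhausting $\ker k$ in the colimit) is also the standard one, and your identification of the stable quotient with $F^{s,t}/F^{s-1,t}$ via $i^{s-1}$ is right.

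However, the concern you flag in your last paragraph is a real gap, not merely a technicality to be filled in: under the bare hypothesis $\varinjlim_s H^*(X_s)\cong 0$, the pages $E_r^{s,t}$ need \emph{not} stabilise at any finite $r$. The cycles $Z_r^{s,t}$ do stabilise for $r\geq s$, but the boundaries $B_r^{s,t}=j(\ker i^{r-1})$ form an increasing chain which need not become constant --- every element of $H^{t-1}(X_{s+1})$ is $i$-power torsion, yet there is no uniform bound. Hence $E_\infty^{s,t}$ in the sense of the paper's Definition~3.3 (existence of a finite $r(s,t)$ at which the page stabilises) need not exist, and your argument cannot be completed as stated within that definition.

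Boardman handles exactly this by defining $E_\infty^{s,t}=\bigcap_r Z_r^{s,t}\big/\bigcup_r B_r^{s,t}$ unconditionally and proving his Theorem~6.1 for half-plane spectral sequences with exiting differentials under the assumption $\varinjlim D=0$. If you adopt that definition of $E_\infty$, your computations (stable cycles equal $k^{-1}(\Ima i^{s-1})$, union of boundaries equals $\ker k$) assemble into a proof --- essentially rederiving Boardman's result. It is also worth noting that in the paper's sole application (Theorem~\ref{Serre2}) a stronger hypothesis is verified: for each bidegree there is an $l$ with $H^{p,q}(N^l)=0$, which \emph{does} force genuine finite-page stabilisation, so the subtlety disappears there.
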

 \begin{proof}
 	See \cite[Theorem 6.1]{boardman}.
 \end{proof}
 
 \section{Motives over a bisimplicial scheme}
 
 For technical reasons, in this paper we need to work over bisimplicial schemes. To this end, we need a triangulated category of motives over a bisimplicial scheme. The triangulated category of motives over a simplicial scheme was introduced and studied by Voevodsky in \cite{voevodsky.simplicial}. More recently, Cisinski and D\'eglise constructed in \cite{cisinski.deglise} a triangulated category of motives over an arbitrary diagram of schemes indexed by a small category. We point out that the constructions and results we need from \cite{voevodsky.simplicial} extend to the bisimplicial case in a straightforward way. In this section we briefly summarise them. 
 
 Let $Y_{\bullet,\bullet}$ be a smooth bisimplicial scheme over $k$. Following \cite[Section 2]{voevodsky.simplicial}, we define $Sm/Y_{\bullet,\bullet}$ in the following way.
 
 \begin{dfn}
 	Denote by $Sm/Y_{\bullet,\bullet}$ the category whose objects are triples $(U,i,h)$, where $i$ and $h$ are non-negative integers and $U$ is a smooth scheme over $Y_{i,h}$, and whose morphisms from $(U,i,h)$ to $(V,j,k)$ are triples $(f,\phi,\psi)$, where $\phi:[j] \rightarrow [i]$ and $\psi:[k] \rightarrow [h]$ are simplicial maps and $f:U \rightarrow V$ is a morphism of schemes such that the square
 	$$
 	\xymatrix{
 		U \ar@{->}[r]^{f} \ar@{->}[d] & V \ar@{->}[d]\\
 		Y_{i,h} \ar@{->}[r]_{Y_{\phi,\psi}} & Y_{j,k}
 	}
 	$$
 	commutes.
 \end{dfn}
 
 We can also define presheaves on $Y_{\bullet,\bullet}$ following \cite[Definition 2.1]{voevodsky.simplicial}.
 
 \begin{dfn}
 	A presheaf of sets (respectively with transfers) on $Y_{\bullet,\bullet}$ consists of a collection $\{F_{i,h}\}_{i,h \geq 0}$ of presheaves of sets (respectively with transfers) on $Sm/Y_{i,h}$ together
 	with a morphism of presheaves of sets (respectively with transfers) $F_{\phi,\psi}: Y_{\phi,\psi}^*(F_{j,k}) \rightarrow  F_{i,h}$ for any simplicial maps $\phi:[j] \rightarrow [i]$ and $\psi:[k] \rightarrow [h]$, such that $F_{id,id} =id$ and $F_{\phi\alpha,\psi\beta} :Y_{\phi\alpha,\psi\beta}^*(F_{m,n}) \rightarrow F_{i,h}$ is equal to the composition of $Y_{\phi,\psi}^*F_{\alpha,\beta} : Y_{\phi,\psi}^*Y_{\alpha,\beta}^*(F_{m,n})  \rightarrow  Y_{\phi,\psi}^*(F_{j,k})$ and $F_{\phi,\psi} : Y_{\phi,\psi}^*(F_{j,k})  \rightarrow  F_{i,h}$, where $\alpha : [m]  \rightarrow  [j]$ and $\beta : [n]  \rightarrow  [k]$ are simplicial maps.
 \end{dfn}
 
 Denote by $PShv(Y_{\bullet,\bullet})$ the category of presheaves of sets on $Y_{\bullet,\bullet}$ and by $PST(Y_{\bullet,\bullet},R)$ the abelian category of presheaves of $R$-modules with transfers on $Y_{\bullet,\bullet}$.
 Note that $PShv(Y_{\bullet,\bullet})$ is nothing but the category of contravariant functors from $Sm/Y_{\bullet,\bullet}$ to $Sets$.
 
 If $F = \{F_{i,h}\}_{i,h \geq 0}$ is a presheaf of sets on $Y_{\bullet,\bullet}$, then $R_{tr}F = \{R_{tr}F_{i,h}\}_{i,h \geq 0}$ is a presheaf with transfers on $Y_{\bullet,\bullet}$. In particular, denote by $R_{tr}(U,i,h)$ the presheaf with transfers associated to the representable presheaf of sets corresponding to $(U,i,h)$.
 
 Let $SmCor(Y_{\bullet,\bullet},R)$ be the full subcategory of $PST(Y_{\bullet,\bullet},R)$ whose objects are arbitrary direct sums of objects of the form $R_{tr}(U,i,h)$.
 
 \begin{lem}
 	The category $PST(Y_{\bullet,\bullet},R)$ is naturally equivalent to the category of $R$-linear contravariant functors from $SmCor(Y_{\bullet,\bullet},R)$ to the category of $R$-modules that preserve coproducts.
 \end{lem}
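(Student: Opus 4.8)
The plan is to produce mutually quasi-inverse functors between the two categories; the argument is the bisimplicial incarnation of the additive Yoneda lemma used by Voevodsky in the simplicial case \cite{voevodsky.simplicial}, with the single simplicial index replaced throughout by the pair $(i,h)$.

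The main ingredient is the Yoneda-type identification. By the standard adjunction defining $R_{tr}$ (left adjoint to the forgetful functor from $PST(Y_{\bullet,\bullet},R)$ to $PShv(Y_{\bullet,\bullet})$) together with the ordinary Yoneda lemma for $Sm/Y_{\bullet,\bullet}$, in which the triple $(U,i,h)$ represents the presheaf of sets $F \mapsto F_{i,h}(U)$, one gets for every $F$ in $PST(Y_{\bullet,\bullet},R)$ a natural isomorphism of $R$-modules $\Hom_{PST(Y_{\bullet,\bullet},R)}(R_{tr}(U,i,h),F)\cong F_{i,h}(U)$. The key point, which I would check carefully, is that this isomorphism is natural in $(U,i,h)$ with respect to \emph{all} morphisms of $SmCor(Y_{\bullet,\bullet},R)$ between representables — finite correspondences over the various $Y_{i,h}$ together with the morphisms induced by the bisimplicial structure maps — since these morphisms carry exactly the transfer and gluing data built into the definition of a presheaf with transfers on $Y_{\bullet,\bullet}$.

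I would then define the functors. Let $\Phi$ send $F$ to the restriction to $SmCor(Y_{\bullet,\bullet},R)$ of $\Hom_{PST(Y_{\bullet,\bullet},R)}(-,F)$; this is $R$-linear and contravariant, and it carries coproducts to products of $R$-modules, because $\Hom\bigl(\bigoplus_\alpha X_\alpha,F\bigr)\cong\prod_\alpha\Hom(X_\alpha,F)$ in any category and, by construction, $SmCor(Y_{\bullet,\bullet},R)$ is closed under the coproducts of $PST(Y_{\bullet,\bullet},R)$. Conversely, let $\Psi$ send an $R$-linear contravariant $G$ on $SmCor(Y_{\bullet,\bullet},R)$ that carries coproducts to products to the collection defined by $(\Psi G)_{i,h}(U):=G(R_{tr}(U,i,h))$; the functoriality of $G$ along the morphisms $R_{tr}(U,i,h)\to R_{tr}(V,j,k)$ coming from finite correspondences and from the structure maps of $Y_{\bullet,\bullet}$ equips $\Psi G$ with the structure of a presheaf with transfers on $Y_{\bullet,\bullet}$, all the axioms of that definition being formal consequences of $G$ being a functor.

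It remains to see that $\Phi$ and $\Psi$ are mutually quasi-inverse. For $F$ in $PST(Y_{\bullet,\bullet},R)$, the Yoneda identification gives $(\Psi\Phi F)_{i,h}(U)=\Hom_{PST(Y_{\bullet,\bullet},R)}(R_{tr}(U,i,h),F)\cong F_{i,h}(U)$, naturally in all the relevant morphisms, so $\Psi\Phi\cong\mathrm{id}$. For $G$ as above, the same identification gives $\Phi\Psi G\cong G$ on representables, and on an arbitrary object $\bigoplus_\alpha X_\alpha$ of $SmCor(Y_{\bullet,\bullet},R)$ both $\Phi\Psi G$ and $G$ send it to $\prod_\alpha G(X_\alpha)$ — the former because $\Hom$ turns coproducts into products, the latter by the hypothesis on $G$ — so $\Phi\Psi\cong\mathrm{id}$. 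The only real work, as so often with statements of this type, is bookkeeping rather than computation: one must verify that all these isomorphisms are natural in the strong sense needed to upgrade them to equivalences of the full functor categories, including their compatibility with the bisimplicial structure maps, which goes exactly as in \cite{voevodsky.simplicial}.
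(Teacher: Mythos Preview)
Your proposal is correct and takes essentially the same approach as the paper: the paper's proof consists solely of the reference ``See \cite[Lemma 2.3]{voevodsky.simplicial}'', and you have written out precisely the bisimplicial incarnation of that Yoneda-type argument, replacing the single simplicial index by the pair $(i,h)$ throughout.
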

 \begin{proof}
 	See \cite[Lemma 2.3]{voevodsky.simplicial}.
 \end{proof}
 
 The previous result allows, as usual, to construct left resolutions $Lres(F)$ consisting of representable presheaves with transfers for any $F$ in $PST(Y_{\bullet,\bullet},R)$.
 
 For any non-negative integers $i$ and $h$ denote by $r_{i,h} : SmCor(Y_{i,h},R)  \rightarrow SmCor(Y_{\bullet,\bullet},R)$ the functor that sends $U$ to $R_{tr}(U,i,h)$. These functors induce in the standard way pairs of adjoint functors
 \begin{align*}
 	PST&(Y_{\bullet,\bullet},R)\\
 	r_{i,h,\#} \uparrow & \downarrow r_{i,h}^*\\
 	PST&(Y_{i,h},R).
 \end{align*}
 
 There are similar functors $r_{i,\bullet} : SmCor(Y_{i,\bullet},R)  \rightarrow SmCor(Y_{\bullet,\bullet},R)$ sending $R_{tr}(U,h)$ to $R_{tr}(U,i,h)$, which induce pairs of adjoint functors
 \begin{align*}
 	PST&(Y_{\bullet,\bullet},R)\\
 	r_{i,\bullet,\#} \uparrow & \downarrow r_{i,\bullet}^*\\
 	PST&(Y_{i,\bullet},R),
 \end{align*}
 
 and $r_{\bullet,h} : SmCor(Y_{\bullet,h},R)  \rightarrow SmCor(Y_{\bullet,\bullet},R)$ sending $R_{tr}(U,i)$ to $R_{tr}(U,i,h)$, which induce pairs of adjoint functors
 \begin{align*}
 	PST&(Y_{\bullet,\bullet},R)\\
 	r_{\bullet,h,\#} \uparrow & \downarrow r_{\bullet,h}^*\\
 	PST&(Y_{\bullet,h},R).
 \end{align*}
 
 Finally, we can also consider the diagonal functor $d : SmCor(d(Y_{\bullet,\bullet}),R)  \rightarrow SmCor(Y_{\bullet,\bullet},R)$ that sends $R_{tr}(U,i)$ to $R_{tr}(U,i,i)$. As usual, this functor induces a pair of adjoint functors
 \begin{align*}
 	PST&(Y_{\bullet,\bullet},R)\\
 	d_{\#} \uparrow & \downarrow d^*\\
 	PST&(d(Y_{\bullet,\bullet}),R).
 \end{align*}
 
 As in \cite[Section 3]{voevodsky.simplicial}, we can define the tensor product of presheaves with transfers $F$ and $G$ on $Y_{\bullet,\bullet}$ in the following way
 $$(F \otimes G)_{i,h} = (F_{i,h} \otimes G_{i,h}) = h_0(Lres(F_{i,h}) \otimes Lres(G_{i,h} )).$$
 Let $D(Y_{\bullet,\bullet},R)$ be the derived category of complexes on $PST(Y_{\bullet,\bullet},R)$ bounded from above. Then, the tensor product just defined induces a tensor triangulated structure on $D(Y_{\bullet,\bullet},R)$ given by
 $$K \stackrel{L}{\otimes} L = Lres(K) \otimes Lres(L)$$
 for all complexes of presheaves with transfers $K$ and $L$.
 
 The unit of this tensor structure is the constant presheaf with transfers denoted also by $R$ whose components are the constant presheaves with transfers on each $Y_{i,h}$.
 
 \begin{lem}\label{tens}
 	Consider the bisimplicial object $LR_{\bullet,\bullet}$ in $SmCor(Y_{\bullet,\bullet},R)$ with terms 
 	$$LR_{i,h} =R_{tr}(Y_{i,h},i,h)$$
 	and the obvious structure morphisms. Let $LR_*$ be the total complex of the corresponding double complex. Then there is a natural quasi-isomorphism
 	$$LR_* \rightarrow R.$$
 \end{lem}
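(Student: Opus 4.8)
The plan is to reduce the claim to the analogous statement for simplicial schemes from \cite{voevodsky.simplicial}, applied twice. Recall that $LR_{\bullet,\bullet}$ is a bisimplicial object in $SmCor(Y_{\bullet,\bullet},R)$, so it gives rise to a double complex in $PST(Y_{\bullet,\bullet},R)$ with $(i,h)$-term $R_{tr}(Y_{i,h},i,h)$, the horizontal and vertical differentials being the alternating sums of the face maps in the two simplicial directions. There is a canonical augmentation $LR_{i,h}=R_{tr}(Y_{i,h},i,h)\rightarrow R$ induced by the structure map $Y_{i,h}\rightarrow Y_{i,h}$, compatible with all face and degeneracy maps, hence assembling into an augmentation of the total complex $LR_*\rightarrow R$; the content of the lemma is that this is a quasi-isomorphism.

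First I would check the statement after evaluating at an arbitrary object $(W,m,n)$ of $Sm/Y_{\bullet,\bullet}$, which suffices since quasi-isomorphisms of complexes of presheaves are detected sectionwise. Evaluating, the double complex becomes the double complex of abelian groups whose $(i,h)$-term is $R_{tr}(Y_{i,h},i,h)(W,m,n)=\bigoplus_{\phi:[i]\to[m],\,\psi:[h]\to[n]} Cor_{Y_{i,h}}\big((W,Y_{\phi,\psi}\circ\text{(str. map)}),Y_{i,h}\big)$ — in other words, after unwinding, the bisimplicial abelian group whose value records finite correspondences from $W$ to the pullbacks $Y_{i,h}\times_{Y_{m,n}}W$ along the two cosimplicial directions $[m]$ and $[n]$. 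This is precisely the bisimplicial object obtained by applying, in each of the two variables separately, the simplicial construction of \cite[Lemma 3.2 (or the relevant lemma)]{voevodsky.simplicial}.

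The key step is then to run the simplicial result of Voevodsky in each direction and invoke the Eilenberg–Zilber theorem (equivalently, a spectral-sequence-of-a-double-complex argument). Fixing the second index $h$, the column $i\mapsto R_{tr}(Y_{i,h},i,h)(W,m,n)$ is the simplicial object computing $R_{tr}(Y_{\bullet,h}\times_{Y_{m,\bullet}}W)$ in the first variable; by the simplicial case its associated complex is quasi-isomorphic (sectionwise) to the constant presheaf, i.e. the augmentation to $R_{tr}(Y_{?,h})\to\cdots$ in that single variable is a quasi-isomorphism. Taking the total complex and filtering by columns gives a spectral sequence whose $E_1$-page is the complex obtained by the \emph{other} simplicial direction applied to the already-collapsed first direction, and a second application of the simplicial lemma collapses that too, yielding $R$ in degree $0$. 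Since the double complex is concentrated in the second quadrant and bounded appropriately, the spectral sequence converges and the total complex is quasi-isomorphic to $R$; naturality of all the augmentations shows this identification is realized by the map $LR_*\rightarrow R$ in the statement.

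The main obstacle I anticipate is purely bookkeeping: making precise the identification of the evaluated bisimplicial group with an iterated application of Voevodsky's simplicial construction, keeping track of the two sets of cosimplicial indices $\phi:[i]\to[m]$, $\psi:[h]\to[n]$ and the compatibilities forced by the definition of $Sm/Y_{\bullet,\bullet}$ and of presheaves with transfers on $Y_{\bullet,\bullet}$. Once that identification is in place, the homological algebra (filtration of the double complex, Eilenberg–Zilber) is routine, and no genuinely new input beyond \cite{voevodsky.simplicial} is needed.
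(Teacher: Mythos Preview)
Your proposal is correct and aligns with the paper's approach, which simply refers to \cite[Lemma 3.9]{voevodsky.simplicial} as extending verbatim to the bisimplicial setting. Your iterated application of the simplicial lemma via a double-complex spectral sequence is a valid way to make this extension explicit; alternatively one may observe directly that evaluating at a connected $(W,m,n)$ yields the bisimplicial $R$-module $R[\Delta[m]\times\Delta[n]]$, whose total complex is contractible by the same argument as in Voevodsky's proof.
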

 \begin{proof}
 	See \cite[Lemma 3.9]{voevodsky.simplicial}.
 \end{proof}
 
 Let $W_{i,h}^{el}(Y_{\bullet,\bullet},R)$ be the class of complexes on $PST(Y_{\bullet,\bullet},R)$ obtained as $r_{i,h,\#}(W^{el}(Y_{i,h},R))$, where $W^{el}(Y_{i,h},R)$ is defined in \cite[Section 4]{voevodsky.simplicial}.
 
 Let $W(Y_{\bullet,\bullet},R)$ be the smallest localizing subcategory of $D(Y_{\bullet,\bullet},R)$ containing all $W_{i,h}^{el}(Y_{\bullet,\bullet},R)$. A morphism in $D(Y_{\bullet,\bullet},R)$ is called an $A^1$-equivalence if its cone lives in $W(Y_{\bullet,\bullet},R)$.
 
 \begin{dfn}
 	The triangulated category $\DM^-_{eff}(Y_{\bullet,\bullet},R)$ of motives over $Y_{\bullet,\bullet}$ is the localization of $D(Y_{\bullet,\bullet},R)$ with respect to $A^1$-equivalences.
 \end{dfn}
 
 What follows consists of a bunch of properties of the restriction functors whose simplicial analogues can be found in \cite[Sections 3 and 4]{voevodsky.simplicial}.
 
 The family $\{r_{i,h}^*\}_{i,h \geq 0}$ induces a family of restriction functors from $D(Y_{\bullet,\bullet},R)$ to $D(Y_{i,h},R)$, with respective left adjoints $Lr_{i,h,{\#}}$, that respect $A^1$-equivalences. Hence, we get a family of restriction functors $\{r_{i,h}^*\}_{i,h \geq 0}$ from $\DM_{eff}^-(Y_{\bullet,\bullet},R)$ to $\DM_{eff}^-(Y_{i,h},R)$ that is moreover conservative. The same is true also for the families of functors $\{r_{i,\bullet}^*\}_{i \geq 0}$ and $\{r_{\bullet,h}^*\}_{h \geq 0}$.
 
 The diagonal functor $d^*$ also induces a functor from $D(Y_{\bullet,\bullet},R)$ to $D(d(Y_{\bullet,\bullet}),R)$, with left adjoint $Ld_{\#}$, respecting $A^1$-equivalences. Therefore, we get a diagonal restriction functor $d^*$ from $\DM_{eff}^-(Y_{\bullet,\bullet})$ to $\DM_{eff}^-(d(Y_{\bullet,\bullet}))$.
 
 The tensor product on $D(Y_{\bullet,\bullet},R)$ respects $A^1$-equivalences, making $\DM^-_{eff}(Y_{\bullet,\bullet},R)$ into a tensor triangulated category. All the restriction functors introduced above respect this tensor structure.
 
 There are also standard functoriality properties. If $p:Y_{\bullet,\bullet} \rightarrow Y'_{\bullet,\bullet}$ is a morphism of smooth bisimplicial schemes, then we get a pair of adjoint functors
 \begin{align*}
 	\DM_{eff}^-&(Y_{\bullet,\bullet},R)\\
 	Lp^* \uparrow & \downarrow Rp_*\\
 	\DM_{eff}^-&(Y'_{\bullet,\bullet},R).
 \end{align*}
 If $p$ is smooth, then $Lp^*=p^*$ and there is also the following adjunction
 \begin{align*}
 	\DM_{eff}^-&(Y_{\bullet,\bullet},R)\\
 	Lp_{\#} \downarrow & \uparrow p^*\\
 	\DM_{eff}^-&(Y'_{\bullet,\bullet},R).
 \end{align*}
 In particular, we have a pair of adjoint functors
 \begin{align*}
 	\DM_{eff}^-&(Y_{\bullet,\bullet},R)\\
 	Lc_{\#} \downarrow & \uparrow c^*\\
 	\DM_{eff}^-&(k,R)
 \end{align*}
 where $c:Y_{\bullet,\bullet} \rightarrow Spec(k)$ is the projection to the base. 
 
 \begin{dfn}
 	A Tate object $T(q)[p]$ in $\DM^-_{eff}(Y_{\bullet,\bullet},R)$ is defined as $c^*(T(q)[p])$. In general, for any motive $M$ in $\DM^-_{eff}(k,R)$, we also denote by $M$ its image $c^*(M)$ in $\DM^-_{eff}(Y_{\bullet,\bullet},R)$.
 \end{dfn}
 
 \begin{rem}\label{tcomp}
 	\normalfont
 	Note that by \cite[Theorem 11.1.13]{cisinski.deglise} (which holds true over any diagram of schemes) the unit object $T$ is compact (and so are all $T(q)[p]$) in $\DM^-_{eff}(Y_{\bullet,\bullet},R)$.
 \end{rem}
 
 A smooth bisimplicial scheme $Y_{\bullet,\bullet}$ induces a bisimplicial object $R_{tr}(Y_{\bullet,\bullet})$ in $SmCor(k,R)$. Then, one can define the motive $M(Y_{\bullet,\bullet})$ of $Y_{\bullet,\bullet}$ in $\DM^{-}_{eff}(k,R)$ as the total complex of the double complex $R_{tr}(Y_{*,*})$ associated to $R_{tr}(Y_{\bullet,\bullet})$. It is an immediate consequence of Lemma \ref{tens} that $$M(Y_{\bullet,\bullet}) \cong Lc_{\#}T.$$
 The latter definition naturally extends to the bisimplicial case the definition of the motive of a simplicial scheme given in \cite[Section 5]{voevodsky.simplicial}. Note that, by the Eilenberg-Zilber theorem, $M(Y_{\bullet,\bullet})$ and $M(d(Y_{\bullet,\bullet}))$ are isomorphic in $\DM^{-}_{eff}(k,R)$. In particular, they have the same motivic cohomology.
 
 The most important result that we need in the following sections is the following.
 
 \begin{prop}
 	Let $Y_{\bullet,\bullet}$ be a smooth bisimplicial scheme. Then, there is an isomorphism
 	$$\Hom_{\DM^{-}_{eff}(Y_{\bullet,\bullet},R)}(T(q')[p'],T(q)[p]) \cong \Hom_{\DM^{-}_{eff}(k,R)}(M(Y_{\bullet,\bullet})(q')[p'],T(q)[p])$$
 	for all integers $q$, $q'$, $p$ and $p'$.
 \end{prop}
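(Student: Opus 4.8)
The plan is to deduce the isomorphism from the adjunction $(Lc_{\#},c^*)$ recorded above, together with the projection formula for the (smooth) structure morphism $c\colon Y_{\bullet,\bullet}\to \mathrm{Spec}(k)$ and the identification $M(Y_{\bullet,\bullet})\cong Lc_{\#}T$. By definition of the Tate objects in $\DM^-_{eff}(Y_{\bullet,\bullet},R)$ one has $T(q')[p']=c^*(T(q')[p'])$ and $T(q)[p]=c^*(T(q)[p])$, so
\begin{align*}
\Hom_{\DM^-_{eff}(Y_{\bullet,\bullet},R)}(T(q')[p'],T(q)[p])
&=\Hom_{\DM^-_{eff}(Y_{\bullet,\bullet},R)}(c^*(T(q')[p']),c^*(T(q)[p]))\\
&\cong\Hom_{\DM^-_{eff}(k,R)}(Lc_{\#}c^*(T(q')[p']),T(q)[p]),
\end{align*}
the last step being the $(Lc_{\#},c^*)$-adjunction. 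It then remains to identify $Lc_{\#}c^*(T(q')[p'])$ with $M(Y_{\bullet,\bullet})(q')[p']$.

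For this I would apply the projection formula $Lc_{\#}(c^*M\otimes N)\cong M\otimes Lc_{\#}N$ with $M=T(q')[p']$ and $N=T$ the unit of $\DM^-_{eff}(Y_{\bullet,\bullet},R)$: since $c^*(T(q')[p'])\otimes T=c^*(T(q')[p'])$ and $Lc_{\#}T\cong M(Y_{\bullet,\bullet})$ by the consequence of Lemma \ref{tens} noted above, this yields $Lc_{\#}c^*(T(q')[p'])\cong T(q')[p']\otimes M(Y_{\bullet,\bullet})=M(Y_{\bullet,\bullet})(q')[p']$, which finishes the argument.

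The one point requiring care, and the step I expect to be the main obstacle, is the validity of the projection formula in the bisimplicial context. Since $c$ is assembled from the smooth structure maps $Y_{i,h}\to\mathrm{Spec}(k)$ and all the restriction functors respect the tensor structure, this should follow from the general formalism of \cite{cisinski.deglise}; alternatively it can be checked directly on generators, using that $Lc_{\#}$ sends the representable motive $R_{tr}(U,i,h)$ to the motive $M(U)$ of $U$ viewed as a smooth $k$-scheme and is compatible with tensor products of representables concentrated in a fixed bidegree, exactly as in \cite[Section 5]{voevodsky.simplicial}.

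If one prefers to stay closer to the cited reference, the same computation can be carried out by resolving the unit: Lemma \ref{tens} gives a quasi-isomorphism $LR_*\to T$ in $\DM^-_{eff}(Y_{\bullet,\bullet},R)$, where $LR_*$ is the total complex of the double complex with terms $R_{tr}(Y_{i,h},i,h)=r_{i,h,\#}$ of the unit of $\DM^-_{eff}(Y_{i,h},R)$; note that this double complex has only finitely many summands in each total degree, so no $\varprojlim^1$ term intervenes. Applying $\Hom_{\DM^-_{eff}(Y_{\bullet,\bullet},R)}(-(q')[p'],T(q)[p])$, using the $(r_{i,h,\#},r_{i,h}^*)$-adjunction together with $r_{i,h}^*c^*=c_{i,h}^*$ to descend to $\DM^-_{eff}(Y_{i,h},R)$, and then the scheme-level identification $\Hom_{\DM^-_{eff}(Y_{i,h},R)}(T(q')[p'],T(q)[p])\cong\Hom_{\DM^-_{eff}(k,R)}(M(Y_{i,h})(q')[p'],T(q)[p])$ (itself an instance of the same adjunction-plus-projection-formula principle one level down), one reassembles the answer from the total complex of the $M(Y_{i,h})(q')[p']$, which by definition computes $M(Y_{\bullet,\bullet})(q')[p']$. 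This is the bisimplicial transcription of Voevodsky's argument, and in either route the crux is the same compatibility of $Lc_{\#}$ with the tensor structure.
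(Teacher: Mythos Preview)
Your argument is correct. The paper's own ``proof'' is merely the citation ``See \cite[Proposition 5.3]{voevodsky.simplicial}'', and what you have written is precisely the bisimplicial transcription of Voevodsky's argument there: the $(Lc_{\#},c^*)$-adjunction together with the projection formula (equivalently, the resolution of the unit via Lemma~\ref{tens}) to identify $Lc_{\#}c^*(T(q')[p'])$ with $M(Y_{\bullet,\bullet})(q')[p']$. Your cautionary remark about the projection formula in the bisimplicial setting is well placed, and both of the justifications you sketch (the Cisinski--D\'eglise formalism, or the direct check on representables as in Voevodsky's Section~5) are adequate; the paper implicitly relies on the same extension when it asserts throughout Section~4 that Voevodsky's results carry over verbatim.
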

 \begin{proof}
 	See \cite[Proposition 5.3]{voevodsky.simplicial}.
 \end{proof}
 
 \section{A Serre spectral sequence for motivic cohomology} 
 
 The main purpose of this section is to construct Postnikov systems in a suitable triangulated category of motives and to study the associated spectral sequences. More precisely, we set our triangulated category $\mathcal{C}$ to be $\DM^-_{eff}(Y_{\bullet,\bullet},R)$, our abelian category $\A$ to be the category of left $H^{**}(Y_{\bullet,\bullet},R)$-modules and our cohomological functor $H$ to be motivic cohomology $H^{**}(-,R)$.
 
 For all $i \geq 0$ denote simply by
 $$r_i^*:\DM_{eff}^-(Y_{\bullet,\bullet},R) \rightarrow \DM_{eff}^-(Y_{i,\bullet},R)$$
 the restriction functors $r_{i,\bullet}^*$ introduced in the last section, and by $Lr_{i,\#}$ the respective left adjoint functors. The image of a motive $N$ in $\DM_{eff}^-(Y_{\bullet,\bullet},R)$ under $r_i^*$ is simply denoted by $N_i$.
 
 Now let us recall some facts about coherence from \cite{smirnov.vishik} and adapt them to the bisimplicial case we are interested in. 
 
 \begin{dfn} 
 	A smooth coherent morphism of smooth bisimplicial schemes is a smooth morphism $\pi:X_{\bullet,\bullet} \rightarrow Y_{\bullet,\bullet}$ such that there is a cartesian square of simplicial schemes
 	$$
 	\xymatrix{
 		X_{j,\bullet} \ar@{->}[r]^{\pi_j} \ar@{->}[d]_{X_ {\theta}} & Y_{j,\bullet}  \ar@{->}[d]^{Y_ {\theta} }\\
 		X_{i,\bullet}  \ar@{->}[r]_{\pi_{i}} & Y_{i,\bullet} 
 	}
 	$$
 	for any simplicial map $\theta:[i] \rightarrow [j]$.
 \end{dfn} 
 
 \begin{dfn}
 	A motive $N$ in $\DM_{eff}^-(Y_{\bullet,\bullet},R)$ is said to be coherent if all simplicial morphisms $\theta:[i] \rightarrow [j]$ induce structural isomorphisms $N_ \theta :LY_ {\theta} ^*(N_i) \rightarrow N_j$ in $\DM_{eff}^-(Y_{j,\bullet},R)$. The full subcategory of $\DM_{eff}^-(Y_{\bullet,\bullet},R)$ consisting of coherent motives is denoted by $\DM_{coh}^-(Y_{\bullet,\bullet},R)$.
 \end{dfn}
 
 \begin{rem}\label{cohloc}
 	\normalfont
 	Note that $\DM_{coh}^-(Y_{\bullet,\bullet},R)$ is a localizing subcategory of $\DM_{eff}^-(Y_{\bullet,\bullet},R)$. Since $L\pi_{\#}$ maps coherent motives to coherent ones for any smooth coherent morphism $\pi$, we have that $M(X_{\bullet,\bullet} \xrightarrow{\pi} Y_{\bullet,\bullet})$ is an object in $\DM_{coh}^-(Y_{\bullet,\bullet},R)$, where $M(X_{\bullet,\bullet} \xrightarrow{\pi} Y_{\bullet,\bullet})$ is the image $L\pi_{\#}(T)$ of the unit Tate motive in $\DM_{eff}^-(X_{\bullet,\bullet},R)$.
 \end{rem}
 
 \begin{prop}\label{filtr}
 	For any motive $N$ in $\DM_{coh}^-(Y_{\bullet,\bullet},R)$ there exists a functorial increasing filtration 
 	$$(N)_{\leq 0} \rightarrow (N)_{\leq 1} \rightarrow \dots \rightarrow (N)_{\leq n-1} \rightarrow (N)_{\leq n} \rightarrow \dots \rightarrow N$$
 	with graded pieces $(N)_n=Cone((N)_{\leq n-1} \rightarrow (N)_{\leq n}) \cong Lr_{n,\#}r^*_n(N)[n]$ which converges in the sense that
 	$$\bigoplus_n (N)_{\leq n} \xrightarrow{id-sh} \bigoplus_n (N)_{\leq n} \rightarrow N$$
 	extends to a distinguished triangle, where $sh:(N)_{\leq n-1} \rightarrow (N)_{\leq n}$ is the map from the filtration.
 \end{prop}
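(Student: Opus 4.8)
The plan is to build $N$ as the total object of an explicit simplicial object assembled out of the restriction functors, and then let $(N)_{\leq n}$ be its $n$-th skeleton. Working already at the level of complexes of presheaves with transfers on $Y_{\bullet,\bullet}$ (so that realisations of simplicial objects are available before one localises), I would first form the simplicial object $\mathbf{L}_\bullet N$ in $\DM_{eff}^-(Y_{\bullet,\bullet},R)$ whose object of $n$-simplices is $Lr_{n,\#}r_n^*(N)$, with structure maps induced by the simplicial structure of $Y_{\bullet,\bullet}$ together with the counits of the adjunctions $(Lr_{n,\#},r_n^*)$. There is a canonical augmentation $\mathbf{L}_\bullet N \to N$. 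The first, and main, step is to prove that this augmentation identifies the (unnormalised) total complex of $\mathbf{L}_\bullet N$ with $N$ in $\DM_{eff}^-(Y_{\bullet,\bullet},R)$.

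For $N = T$ this follows from Lemma~\ref{tens} together with the simplicial instance of \cite[Lemma 3.9]{voevodsky.simplicial} applied fibrewise over each $Y_{i,\bullet}$, which reduces the total complex of $[i]\mapsto Lr_{i,\#}r_i^*(T)$ to the total complex of the double complex $LR_{\bullet,\bullet}$. For an arbitrary coherent $N$ I would reduce to that case: since $r_n^*$ is monoidal and $Lr_{n,\#}$ satisfies the projection formula $Lr_{n,\#}(r_n^*(A)\otimes B)\cong A\otimes Lr_{n,\#}(B)$, and since coherence of $N$ means precisely that all structure maps $LY_\theta^*(N_i)\to N_j$ are isomorphisms, the augmented simplicial object $\mathbf{L}_\bullet N\to N$ is obtained from $\mathbf{L}_\bullet T\to T$ by a diagram-wise tensoring with $N$; applying the case $N=T$ term by term then yields the claim. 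Making this reduction rigorous — in particular, passing from $\mathbf{L}_\bullet T$ to $\mathbf{L}_\bullet N$ compatibly with all faces and degeneracies, which is where coherence is genuinely used — is the step I expect to be the main obstacle.

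Granting this, I would set $(N)_{\leq n}$ to be the $n$-th skeleton of $\mathbf{L}_\bullet N$, equivalently the stupid truncation in cohomological degrees $\geq -n$ of its total complex. These assemble into the desired increasing filtration, which is functorial in $N$ because $r_n^*$, $Lr_{n,\#}$ and skeletal truncation all are. The standard computation of the associated graded of the stupid filtration of a total complex then gives $Cone((N)_{\leq n-1}\to (N)_{\leq n})\cong Lr_{n,\#}r_n^*(N)[n]$, the shift $[n]$ coming from the $n$-simplices sitting in cohomological degree $-n$; note that using the \emph{unnormalised} simplicial object is exactly why the whole of $Lr_{n,\#}r_n^*(N)$ occurs, and not merely a non-degenerate quotient. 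Finally, the total complex of a simplicial object is the homotopy colimit of its skeleta, so that
$$N \cong \hc_n\,(N)_{\leq n},$$
and the homotopy colimit of the sequence $\{(N)_{\leq n}\}_n$ along the skeletal inclusions $sh$ is computed, as always in a triangulated category admitting countable coproducts, by the telescope distinguished triangle on $\bigoplus_n (N)_{\leq n}$ displayed in the statement. This establishes the asserted convergence.
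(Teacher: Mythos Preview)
Your proposal is correct and follows the same route as the paper, which simply cites \cite[Propositions 3.1.6 and 3.1.8]{smirnov.vishik} and observes that those arguments carry over verbatim to the bisimplicial setting; your sketch is precisely an unpacking of that bar-type resolution and its stupid filtration. One small terminological point: the $n$-th skeleton of a simplicial object and the stupid truncation of its unnormalised chain complex are not literally the same thing, but since you make clear you are working with the latter (and this is exactly what yields the full $Lr_{n,\#}r_n^*(N)[n]$ as graded piece), the argument goes through.
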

 \begin{proof}
 	The proofs of \cite[Propositions 3.1.6 and 3.1.8]{smirnov.vishik} extend verbatim to the bisimplicial case. 
 \end{proof}
 
 The next proposition is a generalisation of \cite[Proposition 3.1.5]{smirnov.vishik}. Indeed, it allows to construct Postnikov systems for coherent motives with simplicial components which are direct sums of Tate motives satisfying some specific conditions. The proof follows the guidelines of \cite[Proposition 3.1.5]{smirnov.vishik} and essentially reproduces the same arguments in our more general context. Before proceeding, we need to define a strict order relation on the bidegrees $(q)[p]$.
 
 \begin{dfn}\label{ord}
 	We set $(q)[p] \prec (q')[p']$ if and only if one of the following two conditions is satisfied:\\
 	1) $q<q'$;\\
 	2) $q=q'$ and $p<p'$.
 \end{dfn}
 
 For any $j \geq 0$, let $T^j$ be the possibly infinite direct sum $\bigoplus_{I_j} T(q_j)[p_j]$ in $\DM_{eff}^-(k,R)$ such that $(q_j)[p_j] \prec (q_{j+1})[p_{j+1}]$ and let $N$ in $\DM_{coh}^-(Y_{\bullet,\bullet},R)$ be such a motive that its simplicial components $N_i$ in $\DM_{eff}^-(Y_{i,\bullet},R)$ are isomorphic to the direct sum $\bigoplus_{j \geq 0} T^j$.
 
 Note that in $\DM_{eff}^-(Y_{1,\bullet},R)$ the automorphism group $Aut(\bigoplus_{j \geq 0} T^j)$ consists of invertible upper triangular matrices, since
 $$\Hom_{\DM_{eff}^-(Y_{1,\bullet},R)}(T,T(q)[p]) \cong H^{p,q}(Y_{1,\bullet},R) \cong 0$$ 
 for any $(q)[p] \prec (0)[0]$.
 
 Recall that, since $N$ is coherent, for any simplicial map $\theta:[i] \rightarrow [j]$, the structural map $N_{\theta}:LY^*_{\theta}(N_i) \rightarrow N_j$ is an isomorphism. In particular, we have two automorphisms
 $$N_{\partial_0}:LY^*_{\partial_0}(N_0) \cong \bigoplus_{j \geq 0} T^j \rightarrow N_1 \cong \bigoplus_{j \geq 0} T^j$$ 
 and
 $$N_{\partial_1}:LY^*_{\partial_1}(N_0) \cong \bigoplus_{j \geq 0} T^j \rightarrow N_1 \cong \bigoplus_{j \geq 0} T^j.$$ 
 
 \begin{dfn}
 	Denote by $\omega^N$ the composition $N_{\partial_1} \circ N_{\partial_0}^{-1}$ belonging to $Aut(N_1)$. For any $j \geq 0$, consider the homomorphism $Aut(\bigoplus_{j \geq 0} T^j) \rightarrow Aut(T^j)$ that sends each invertible upper triangular matrix to its $j$th diagonal entry. We denote the images of $\omega^N$ under these homomorphisms by $\omega^N_j$.
 \end{dfn}
 
 \begin{rem}
 	\normalfont
 	Suppose that $I_j$ is a finite set of order $n_j$ and let $CC(Y_{\bullet,\bullet})$ be the bisimplicial set obtained by applying to $Y_{\bullet,\bullet}$ the connected components functor $CC$ that sends a connected scheme to the point and respects coproducts. Denote by $|CC(Y_{\bullet,\bullet})|$ its geometric realization.
 	
 	If the nonabelian cohomology $H^1(|CC(Y_{\bullet,\bullet})|,GL_{n_j}(R))$ is trivial, then $\omega^N_j$ is the identity.
 	
 	In fact, $\omega^N_j$ is an automorphism of $T^j$ in $\DM_{eff}^-(Y_{1,\bullet},R)$, thus an invertible element of 
 	$$\Hom_{\DM_{eff}^-(Y_{1,\bullet},R)}(T^j,T^j) \cong \prod_{I_j} \bigoplus_{I_j}  \Hom_{\DM_{eff}^-(Y_{1,\bullet},R)}(T,T)\cong \prod_{I_j} \bigoplus_{I_j} H^{0,0}(Y_{1,\bullet},R)$$
 	by Remark \ref{tcomp}.
 	
 	Note that $H^{0,0}(Y_{1,\bullet},R)$ is the free $R$-module of rank given by the number of connected components of $Y_{1,\bullet}$. Hence, if we fix a connected component of $Y_{1,\bullet}$, $\omega^N_j$ restricts to an invertible element of
 	$$\prod_{I_j} \bigoplus_{I_j} R \cong M_{n_j}(R),$$
 	i.e. to an element of $GL_{n_j}(R)$.
 	This way one can see $\omega^N_j$ as a morphism of groupoids
 	$$\Pi_1(|CC(Y_{\bullet,\bullet})|) \rightarrow GL_{n_j}(R),$$
 	so as an element of $H^1(|CC(Y_{\bullet,\bullet})|,GL_{n_j}(R))$.
 \end{rem}
 
 \begin{prop}\label{post}
 	
 	Let $N$ be a motive in $\DM_{coh}^-(Y_{\bullet,\bullet},R)$ such that its simplicial components $N_i$ in $\DM_{eff}^-(Y_{i,\bullet},R)$ are isomorphic to the direct sum $\bigoplus_{j \geq 0} T^j$, where $T^j$ is the possibly infinite direct sum $\bigoplus_{I_j} T(q_j)[p_j]$ such that $(q_j)[p_j] \prec (q_{j+1})[p_{j+1}]$ for all $j$.
 	
 	If $\omega^N_j$ is trivial for any $j \geq 0$, then there exists a Postnikov system in $\DM_{eff}^-(Y_{\bullet,\bullet},R)$
 	$$
 	\xymatrix{
 		\dots \ar@{->}[r] & N^{j+1} \ar@{->}[r] \ar@{->}[d]	 &N^j \ar@{->}[r] \ar@{->}[d] &  \dots \ar@{->}[r]   & N^1 \ar@{->}[r] \ar@{->}[d]	 &N=N^0 \ar@{->}[d] \\
 		& T^{j+1} \ar@{->}[ul]^{[1]}  &	T^j \ar@{->}[ul]^{[1]} & & T^1 \ar@{->}[ul]^{[1]}  &	T^0 \ar@{->}[ul]^{[1]} & 
 	}
 	$$
 	such that the simplicial components $N^j_i$ are isomorphic to the direct sum $\bigoplus_{k \geq j} T^k$ and the morphisms $r_i^*(N^j \rightarrow T^j)$ are the natural projections $\bigoplus_{k \geq j} T^k \rightarrow T^j$ in $\DM_{eff}^-(Y_{i,\bullet},R)$.
 \end{prop}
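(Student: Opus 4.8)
The plan is to follow closely the strategy of \cite[Proposition 3.1.5]{smirnov.vishik}, peeling off one direct summand $T^j$ at a time from the bottom of the order $\prec$ and forming iterated fibres. The key step is to produce a morphism $f^0 \colon N \to T^0$ in $\DM_{eff}^-(Y_{\bullet,\bullet},R)$ whose restriction $r_i^*(f^0)$ is, under suitable identifications of the components, the natural projection $\bigoplus_{k \geq 0} T^k \to T^0$ for every $i \geq 0$. Once this is done, one sets $N^1 := Cone(f^0)[-1]$, obtains the distinguished triangle $N^1 \to N \to T^0 \xrightarrow{[1]} N^1[1]$, and repeats the construction with $N^1$, $T^1$ in place of $N$, $T^0$; splicing the resulting triangles yields the Postnikov system.

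To construct $f^0$ I would argue by obstruction theory along the functorial filtration $\{(N)_{\leq n}\}$ of Proposition \ref{filtr}. Since its graded pieces are $(N)_n \cong Lr_{n,\#}r_n^*(N)[n]$ and $T^0 = c^*(T^0)$ restricts on $Y_{n,\bullet}$ to $T^0$ again, adjunction gives $\Hom_{\DM_{eff}^-(Y_{\bullet,\bullet},R)}((N)_n, T^0[m]) \cong \Hom_{\DM_{eff}^-(Y_{n,\bullet},R)}(N_n, T^0[m-n])$, which by compactness of the Tate objects (Remark \ref{tcomp}) is a product of motivic cohomology groups of $Y_{n,\bullet}$ and therefore commutes with the possibly infinite decomposition $N_n \cong \bigoplus_{k \geq 0} T^k$. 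Building $f^0$ then amounts to extending the projection $\mathrm{pr}_0 \in \Hom((N)_{\leq 0}, T^0) \cong \Hom_{\DM_{eff}^-(Y_{0,\bullet},R)}(N_0, T^0)$ successively over each $(N)_{\leq n}$ via the distinguished triangles $(N)_{\leq n-1} \to (N)_{\leq n} \to (N)_n \xrightarrow{[1]}$. The obstruction to extending over $(N)_{\leq n}$ lies in $\Hom_{\DM_{eff}^-(Y_{n,\bullet},R)}(N_n, T^0[1-n])$, and the extension, when it exists, is unique because $\Hom_{\DM_{eff}^-(Y_{n,\bullet},R)}(N_n, T^0[-n]) = 0$ for $n \geq 1$. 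For $n \geq 2$ the obstruction group vanishes: each summand contributes $\Hom_{\DM_{eff}^-(Y_{n,\bullet},R)}(T(q_k)[p_k], T(q_0)[p_0+1-n])$, a motivic cohomology group of $Y_{n,\bullet}$ sitting in negative weight or in negative cohomological degree because $(q_0)[p_0] \preceq (q_k)[p_k]$ and $1-n \leq -1$, hence zero. For $n = 1$ the obstruction is the image of $\mathrm{pr}_0$ under the connecting map $\Hom((N)_{\leq 0}, T^0) \to \Hom((N)_1[-1], T^0) \cong \Hom_{\DM_{eff}^-(Y_{1,\bullet},R)}(N_1, T^0)$; unwinding the first attaching map of the skeletal filtration through the coherence isomorphisms $N_{\partial_0}$, $N_{\partial_1}$ (exactly as in \cite[Proposition 3.1.5]{smirnov.vishik}) identifies this element with $\mathrm{pr}_0 \circ (\mathrm{id} - \omega^N)$ up to sign. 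Now $\omega^N$ is upper triangular and $\Hom_{\DM_{eff}^-(Y_{1,\bullet},R)}(T^k, T^0) = 0$ for all $k \geq 1$ since $(q_0)[p_0] \prec (q_k)[p_k]$, so $\mathrm{pr}_0 \circ \omega^N = \omega^N_0 \circ \mathrm{pr}_0 = \mathrm{pr}_0$ by the hypothesis $\omega^N_0 = \mathrm{id}$, and the $n=1$ obstruction vanishes too. Hence $f^0$ exists, and by convergence of the filtration together with the vanishing $\Hom((N)_{\leq n}, T^0[-1]) = 0$ for all $n$ it is the unique morphism restricting to $\mathrm{pr}_0$ over $(N)_{\leq 0}$; coherence of $N$ and of $T^0$ then identifies each $r_i^*(f^0)$ with the projection $\bigoplus_{k \geq 0} T^k \to T^0$.

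With $f^0$ in hand, $T^0 = c^*(T^0)$ is coherent (all of its structural maps being identities), so $N^1 := Cone(f^0)[-1]$ is coherent because $\DM_{coh}^-(Y_{\bullet,\bullet},R)$ is a triangulated subcategory of $\DM_{eff}^-(Y_{\bullet,\bullet},R)$ (Remark \ref{cohloc}). Applying the exact functors $r_i^*$ to $N^1 \to N \to T^0 \xrightarrow{[1]}$ and using that $r_i^*(f^0)$ is a split epimorphism gives $N^1_i \cong \bigoplus_{k \geq 1} T^k$ compatibly; writing $\omega^N$ in block form with respect to $N_1 \cong T^0 \oplus N^1_1$ and using $\Hom_{\DM_{eff}^-(Y_{1,\bullet},R)}(T^k, T^0) = 0$ for $k \geq 1$ together with $\omega^N_0 = \mathrm{id}$ shows that $\omega^N$ restricts to $\omega^{N^1}$ on $N^1_1$ with the same diagonal entries, so $\omega^{N^1}_j = \omega^N_j$ is trivial for $j \geq 1$. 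Therefore the construction applies verbatim to $N^1$, producing $f^1 \colon N^1 \to T^1$ and $N^2 := Cone(f^1)[-1]$, and inductively the motives $N^j$ with the asserted simplicial components together with the maps $N^j \to T^j$ and $N^{j+1} \to N^j$; splicing the triangles $N^{j+1} \to N^j \to T^j \xrightarrow{[1]} N^{j+1}[1]$ produces the required Postnikov system, and $r_i^*(N^j \to T^j) = \mathrm{pr}_j$ by construction.

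The step I expect to be the main obstacle is the construction of $f^0$ — precisely, verifying that the $n=1$ obstruction is governed by $\omega^N$, which requires carefully unwinding the first attaching map of the skeletal filtration of Proposition \ref{filtr} through the coherence data as in \cite[Proposition 3.1.5]{smirnov.vishik}, and keeping track of the possibly infinite direct sums via compactness of the Tate objects. The vanishing of the higher ($n \geq 2$) obstructions is a routine consequence of the bidegree ordering $\prec$, and the assembly of the tower is formal.
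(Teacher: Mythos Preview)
Your proposal is correct and follows essentially the same approach as the paper. The only cosmetic differences are that the paper pushes everything forward via $Lc_{\#}$ and works in $\DM_{eff}^-(k,R)$ (which is equivalent to your direct computations by adjunction), and packages the $n=0,1$ steps of your obstruction theory as a three-term exact sequence $0 \to \Hom(Lc_{\#}N,T^0) \to \Hom(Lc_{\#,0}N_0,T^0) \to \Hom(Lc_{\#,1}N_1,T^0)$ which it then compares termwise with the analogous sequence for $T^0$ in place of $N$; the triviality of $\omega^N_0$ is invoked exactly to identify the rightmost maps of these two sequences, whence $\Hom_{\DM_{eff}^-(Y_{\bullet,\bullet},R)}(N,T^0) \cong \Hom_{\DM_{eff}^-(Y_{\bullet,\bullet},R)}(T^0,T^0)$ and $f^0$ is the preimage of the identity.
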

 \begin{proof}
 	To construct the aimed Postnikov system we just need to produce morphisms $N^j \rightarrow T^j$ where each $N^j$ is defined as the cone of the previous morphism, namely $N^j=Cone(N^{j-1} \rightarrow T^{j-1})[-1]$. We proceed by induction.
 	
 	Notice that each simplicial component of $N$ is isomorphic to $\bigoplus_{j \geq 0} T^j$ and $T^0$ is the direct sum of possibly infinite $T(q_0)[p_0]$ such that $(q_0)[p_0] \prec (q_j)[p_j]$ for any $j \geq 1$ by hypothesis. By applying the triangulated functor $Lc_{\#}$ to the filtration of Proposition \ref{filtr}, one gets a filtration $(Lc_{\#}N)_{\leq n}$ for $Lc_{\#}N$ with graded pieces $(Lc_{\#}N)_n \cong \bigoplus_{j \geq 0} \bigoplus_{I_j} M(Y_{n,\bullet})(q_j)[p_j+n]$. Following the lines of the proof of \cite[Proposition 3.1.5]{smirnov.vishik}, we denote by $(Lc_{\#}N)_{>n}$ the cone $Cone((Lc_{\#}N)_{\leq n} \rightarrow Lc_{\#}N)$ and by $(Lc_{\#}N)_{m \geq *>n}$ the cone $Cone((Lc_{\#}N)_{\leq n} \rightarrow (Lc_{\#}N)_{\leq m})$ for any $m>n$. Now, note that 
 	$$(Lc_{\#}N)_{>n} \cong Cone(\bigoplus_{m>n}(Lc_{\#}N)_{m \geq *>n} \xrightarrow{id-sh} (Lc_{\#}N)_{m \geq *>n} )$$ 
 	and moreover $(Lc_{\#}N)_{m \geq *>n}$ is an extension of $(Lc_{\#}N)_k$ for $n<k \leq m$. Therefore, we have that
 	$$\Hom_{\DM^{-}_{eff}(k,R)}((Lc_{\#}N)_{>0},T^0) \cong 0,$$
 	$$\Hom_{\DM^{-}_{eff}(k,R)}((Lc_{\#}N)_{>1},T^0) \cong 0,$$
 	$$\Hom_{\DM^{-}_{eff}(k,R)}((Lc_{\#}N)_{>1},T^0[1]) \cong 0,$$
 	since $\Hom_{\DM^{-}_{eff}(k,R)}(M(Y_{n,\bullet}),T(q)[p]) \cong 0$ for any $n \geq 0$ and any $(q)[p] \prec (0)[0]$. We deduce from these remarks and by applying the cohomological functor $\Hom_{\DM^{-}_{eff}(k,R)}(-,T^0)$ to the distinguished triangle
 	$$(Lc_{\#}N)_0 \rightarrow Lc_{\#}N \rightarrow (Lc_{\#}N)_{>0} \rightarrow (Lc_{\#}N)_0[1]$$
 	that there exists an exact sequence
 	$$0 \rightarrow \Hom_{\DM^{-}_{eff}(k,R)}(Lc_{\#}(N),T^0) \rightarrow \Hom_{\DM^{-}_{eff}(k,R)}(Lc_{\#,0}(N_0),T^0)$$
 	$$\rightarrow \Hom_{\DM^{-}_{eff}(k,R)}(Lc_{\#,1}(N_1),T^0).$$
 	Repeating the same arguments for $T^0$ in $\DM_{coh}^-(Y_{\bullet,\bullet},R)$ one gets a similar sequence 
 	$$0 \rightarrow \Hom_{\DM^{-}_{eff}(k,R)}(Lc_{\#}(T^0),T^0) \rightarrow \Hom_{\DM^{-}_{eff}(k,R)}(Lc_{\#,0}(T^0),T^0)$$
 	$$\rightarrow \Hom_{\DM^{-}_{eff}(k,R)}(Lc_{\#,1}(T^0),T^0).$$
 	At this point we want to produce an isomorphism between $\Hom_{\DM^{-}_{eff}(k,R)}(Lc_{\#}(N),T^0)$ and\\
 	$\Hom_{\DM^{-}_{eff}(k,R)}(Lc_{\#}(T^0),T^0)$. In order to do so, we need to identify the last morphisms of the two exact sequences. Since in the exact sequences only the $0$th and the $1$st simplicial components appear, it is enough to get a compatibility between the coherent system $(N_i,N_{\theta})$ and the one of $T^0$ for $i=0,1$ and simplicial maps $\theta:[0] \rightarrow [1]$, where $N_{\theta}$ is the structural isomorphism $LY^*_{\theta}(N_0) \rightarrow N_1$ in $\DM_{eff}^-(Y_{1,\bullet},R)$. In other words, we want a commutative diagram
 	$$
 	\xymatrix{
 		N_1 \cong \bigoplus_{j \geq 0} T^j \ar@{->}[r]^{\omega^N} \ar@{->}[d] & N_1 \cong \bigoplus_{j \geq 0} T^j \ar@{->}[d]\\
 		T^0  \ar@{->}[r]_{id} & T^0.
 	}
 	$$
 	Indeed, we have such a commutative diagram since by hypothesis $\omega^N_0$ is trivial. Hence, the two exact sequences above coincide. Then, we have that
 	$$\Hom_{\DM^{-}_{eff}(Y_{\bullet,\bullet},R)}(N,T^0) \cong \Hom_{\DM^{-}_{eff}(k,R)}(Lc_{\#}(N),T^0) \cong$$
 	$$\Hom_{\DM^{-}_{eff}(k,R)}(Lc_{\#}(T^0),T^0) \cong \Hom_{\DM^{-}_{eff}(Y_{\bullet,\bullet},R)}(T^0,T^0)$$
 	by adjunctions and the identity of $T^0$ provides the pursued morphism $N \rightarrow T^0$ whose restriction on each simplicial component is given by the natural projection $\bigoplus_{k \geq 0}T^k \rightarrow T^0$. It follows that $N^1_i$ is isomorphic to $\bigoplus_{k \geq 1}T^k$ for any $i$. This proves the induction basis.
 	
 	Now, suppose we have a morphism from $N^k$ to $T^k$ for any $0 \leq k \leq j-1$, where each $N^k$ is defined as $Cone(N^{k-1} \rightarrow T^{k-1})[-1]$. We denote by $N^j$ the cone $Cone(N^{j-1} \rightarrow T^{j-1})[-1]$. Notice that the simplicial components of $N^j$ are all isomorphic to $\bigoplus_{l \geq j} T^l$ and $T^j$ is the direct sum of possibly infinite $T(q_j)[p_j]$ such that $(q_j)[p_j] \prec (q_l)[p_l]$ for any $l \geq j+1$ by hypothesis. Therefore, by applying the same arguments of the induction basis to $N^j$, using the fact that $\omega^{N^j}_0=\omega^N_j$ is trivial by hypothesis, there exists a morphism $N^j \rightarrow T^j$. This completes the proof.
 \end{proof}
 
 \begin{rem}
 	\normalfont
 	We point out that, in the case it exists an integer $k$ such that $I_j$ is empty for all $j \geq k$, the previous result provides a finite Postnikov system for $N$ in $\DM_{eff}^-(Y_{\bullet,\bullet},R)$.
 \end{rem}
 
 We want to apply Proposition \ref{post} to produce a spectral sequence for morphisms having motivically cellular fibers, i.e. fibers whose motives are direct sums of Tate motives satisfying certain conditions. First, we need to construct suitable Postnikov systems. The next result is a generalisation of \cite[Proposition 4.2]{tanania.b}.
 
 \begin{prop} \label{Serre}
 	Let $\pi:X_{\bullet,\bullet} \rightarrow Y_{\bullet,\bullet}$ be a smooth coherent morphism of smooth bisimplicial schemes over $k$, $A_{\bullet}$ a smooth simplicial $k$-scheme and, for any $j \geq 0$, $T^j$ the possibly infinite direct sum of Tate motives $\bigoplus_{I_j} T(q_j)[p_j]$ in $\DM_{eff}^-(k,R)$ such that $(q_j)[p_j] \prec (q_{j+1})[p_{j+1}]$. Moreover, suppose the following conditions hold:\\
 	1) over the $0$th simplicial component $\pi$ is isomorphic to the projection $Y_{0,\bullet} \times A_{\bullet} \rightarrow Y_{0,\bullet}$;\\
 	2) $\omega^{M(X_{\bullet,\bullet} \rightarrow Y_{\bullet,\bullet})}_j$ is trivial for any $j \geq 0$;\\
 	3) $M(A_{\bullet}) \cong \bigoplus_{j \geq 0} T^j \in \DM_{eff}^-(k,R)$.\\
 	Then, there exists a Postnikov system in $\DM_{eff}^-(Y_{\bullet,\bullet},R)$
 	$$
 	\xymatrix{
 		\dots \ar@{->}[r] & N^{j+1} \ar@{->}[r] \ar@{->}[d]	 &N^j \ar@{->}[r] \ar@{->}[d] &  \dots \ar@{->}[r]   & N^2 \ar@{->}[r] \ar@{->}[d]	 &N^1 \ar@{->}[r] \ar@{->}[d]  &M(X_{\bullet,\bullet} \xrightarrow{\pi} Y_{\bullet,\bullet})=N^0 \ar@{->}[d]\\
 		& T^{j+1} \ar@{->}[ul]^{[1]}  &	T^j \ar@{->}[ul]^{[1]} & & T^2 \ar@{->}[ul]^{[1]}  &	T^1 \ar@{->}[ul]^{[1]} & T^0 \ar@{->}[ul]^{[1]}
 	}
 	$$
 	such that the simplicial components $N^j_i$ are isomorphic to the direct sum $\bigoplus_{k \geq j} T^k$ and the morphisms $r_i^*(N^j \rightarrow T^j)$ are the natural projections $\bigoplus_{k \geq j} T^k \rightarrow T^j$ in $\DM_{eff}^-(Y_{i,\bullet},R)$.
 \end{prop}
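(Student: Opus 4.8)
The plan is to obtain the Postnikov system by applying Proposition~\ref{post} to the motive $N:=M(X_{\bullet,\bullet}\xrightarrow{\pi}Y_{\bullet,\bullet})=L\pi_{\#}(T)$. Since $\pi$ is smooth coherent, $N$ lies in $\DM_{coh}^-(Y_{\bullet,\bullet},R)$ by Remark~\ref{cohloc}, and the triviality of $\omega^N_j$ for all $j$ is exactly condition~2. Thus the only point left to check in order to invoke Proposition~\ref{post} is that each simplicial component $N_i=r_i^*(N)$ in $\DM_{eff}^-(Y_{i,\bullet},R)$ is isomorphic to $\bigoplus_{j\geq 0}T^j$; granting this, Proposition~\ref{post} immediately delivers the asserted Postnikov system together with the descriptions of the $N^j_i$ and of the morphisms $r_i^*(N^j\to T^j)$.

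To compute $N_0$, I would use that the restriction functor $r_0^*$ is compatible with $L\pi_{\#}$ — this is base change along the cartesian squares built into the definition of a smooth coherent morphism, and works exactly as in the simplicial setting treated in \cite[Sections 3 and 4]{voevodsky.simplicial} and \cite[Proposition 3.1.5]{smirnov.vishik} — to conclude $N_0\cong L(\pi_0)_{\#}(T)=M(X_{0,\bullet}\xrightarrow{\pi_0}Y_{0,\bullet})$. By condition~1 this equals $M(Y_{0,\bullet}\times A_{\bullet}\to Y_{0,\bullet})$; since $Y_{0,\bullet}\times A_{\bullet}=Y_{0,\bullet}\times_{Spec(k)}A_{\bullet}$ with $A_{\bullet}$ smooth, smooth base change along the structure map $c_{Y_{0,\bullet}}:Y_{0,\bullet}\to Spec(k)$ identifies it with $c_{Y_{0,\bullet}}^*(M(A_{\bullet}))$. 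As $c_{Y_{0,\bullet}}^*$ is triangulated, commutes with arbitrary coproducts (being a left adjoint), and sends each $T(q)[p]$ over $k$ to $T(q)[p]$ over $Y_{0,\bullet}$ by definition, condition~3 then yields $N_0\cong\bigoplus_{j\geq 0}T^j$.

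For a general index $i$ I would pass through the coherence of $N$: for a simplicial map $\theta:[0]\to[i]$ the structural morphism $N_{\theta}:LY_{\theta}^*(N_0)\to N_i$ is an isomorphism, and $LY_{\theta}^*$ is triangulated, preserves coproducts, and satisfies $LY_{\theta}^*\circ c_{Y_{0,\bullet}}^*\cong c_{Y_{i,\bullet}}^*$ (both composites being the pullback from $\DM_{eff}^-(k,R)$, since $c_{Y_{0,\bullet}}\circ Y_{\theta}=c_{Y_{i,\bullet}}$), hence carries $\bigoplus_{j\geq 0}T^j$ to $\bigoplus_{j\geq 0}T^j$. Therefore $N_i\cong\bigoplus_{j\geq 0}T^j$ for every $i$, and Proposition~\ref{post} applies, proving the statement. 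One could instead observe that the coherence squares base-change condition~1 levelwise, so that each $\pi_i$ is the projection $Y_{i,\bullet}\times A_{\bullet}\to Y_{i,\bullet}$, and then run the argument for $N_0$ at every level.

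The step I expect to be the main obstacle — indeed the only part that is not purely formal — is the base-change identity $r_0^*\circ L\pi_{\#}\cong L(\pi_0)_{\#}\circ r_0^*$, together with smooth base change over $Spec(k)$: these rest on the cartesian squares in the notion of smooth coherent morphism and on the compatibility of the restriction functors with $A^1$-localization and with left Kan extension, all of which carry over from the simplicial case of \cite{voevodsky.simplicial} and \cite{smirnov.vishik} without change. The ordering hypothesis $(q_j)[p_j]\prec(q_{j+1})[p_{j+1}]$ plays no independent role here: it is used only inside Proposition~\ref{post}, where it forces $Aut(\bigoplus_{j\geq 0}T^j)$ to consist of upper triangular matrices and makes the relevant $\Hom$-groups vanish.
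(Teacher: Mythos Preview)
Your proposal is correct and follows essentially the same approach as the paper: verify that the hypotheses of Proposition~\ref{post} hold for $N=M(X_{\bullet,\bullet}\xrightarrow{\pi}Y_{\bullet,\bullet})$ by identifying each $N_i$ with $\bigoplus_{j\geq 0}T^j$, then invoke that proposition. The paper takes exactly the route you sketch in your final sentence---using the coherence cartesian squares to see that each $\pi_i$ is the projection $Y_{i,\bullet}\times A_{\bullet}\to Y_{i,\bullet}$, hence $N_i\cong M(A_{\bullet})$---rather than your primary route via the structural isomorphisms $N_\theta$, but the two are formally equivalent and your discussion of base change is accurate.
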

 \begin{proof}
 	By coherence of $\pi$, we have that $\pi_i:Y_{i,\bullet} \times A_{\bullet} \cong X_{i,\bullet} \rightarrow Y_{i,\bullet}$ is the projection onto the first factor for any $i$. It follows that the coherent motive $N^0$ (see Remark \ref{cohloc}) has simplicial components given by $N^0_i \cong M(A_{\bullet})$ in $\DM_{eff}^-(Y_{i,\bullet},R)$ for any $i$. Therefore, Proposition \ref{post} implies the existence of the aimed Postnikov system in $\DM_{eff}^-(Y_{\bullet,\bullet},R)$, and the proof is complete. 
 \end{proof} 
 
 Recall from Section \ref{spec} that, once constructed a Postnikov system in a triangulated category and considered a suitable cohomological functor, one can obtain a spectral sequence which may converge if some extra requirements are met. The following theorem just states the existence of a strongly convergent spectral sequence related to the Postnikov system of Proposition \ref{Serre}.
 
 \begin{thm}\label{Serre2}
 	Let $\pi:X_{\bullet,\bullet} \rightarrow Y_{\bullet,\bullet}$ be a smooth coherent morphism of smooth bisimplicial schemes over $k$ and $A_{\bullet}$ a smooth simplicial $k$-scheme satisfying all conditions of Proposition \ref{Serre}. Moreover, for any bidegree $(q)[p]$, suppose there is an integer $l$ such that $(q)[p] \prec (q_l)[p_l]$. Then, there exists a strongly convergent spectral sequence
 	$$E_1^{p,q,s}=\prod_{I_s} H^{p-p_s,q-q_s}(Y_{\bullet,\bullet},R) \Longrightarrow H^{p,q}(X_{\bullet,\bullet},R).$$
 \end{thm}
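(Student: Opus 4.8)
The plan is to feed the Postnikov system of Proposition~\ref{Serre} into the formalism of Section~\ref{spec}, read off the $E_1$-page from the $T^s$, and then verify the hypothesis of Theorem~\ref{SC}. Concretely, I would regard the diagram $\dots\to N^{j+1}\to N^{j}\to\dots\to N^{1}\to N^{0}=M(X_{\bullet,\bullet}\xrightarrow{\pi}Y_{\bullet,\bullet})$ of Proposition~\ref{Serre}, together with the triangles $N^{j+1}\to N^{j}\to T^{j}\to N^{j+1}[1]$, as a Postnikov system in $\DM^-_{eff}(Y_{\bullet,\bullet},R)$ in the sense of Definition~\ref{ps}, with $X_s=N^{s-1}$ and $Y_s=T^{s-1}$, and apply the cohomological functor $H^{**}(-,R)$. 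By Section~\ref{spec} this produces an exact couple and hence a spectral sequence which --- $H^{**}$ being itself bigraded --- carries a third index $E_r^{p,q,s}$ with $s$-th column $H^{p,q}(T^s,R)$; the induced filtration on $H^{p,q}(N^0,R)$ is $F^{s}=\ker\bigl(H^{p,q}(N^0,R)\to H^{p,q}(N^s,R)\bigr)$, which is bounded below and therefore complete Hausdorff, so only exhaustiveness is in question.

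To identify the $E_1$-term, recall that $T^s\cong\bigoplus_{I_s}T(q_s)[p_s]$ in $\DM^-_{eff}(Y_{\bullet,\bullet},R)$; since $\Hom$ turns coproducts into products,
$$
E_1^{p,q,s}=H^{p,q}(T^s,R)=\prod_{I_s}\Hom_{\DM^-_{eff}(Y_{\bullet,\bullet},R)}\bigl(T(q_s)[p_s],T(q)[p]\bigr),
$$
and by the comparison isomorphism recorded above (the bisimplicial form of \cite[Proposition~5.3]{voevodsky.simplicial}) each factor is $\Hom_{\DM^-_{eff}(k,R)}\bigl(M(Y_{\bullet,\bullet})(q_s)[p_s],T(q)[p]\bigr)$, which equals $H^{p-p_s,q-q_s}(Y_{\bullet,\bullet},R)$ for $q\geq q_s$ (untwisting via the cancellation theorem) and vanishes for $q<q_s$, consistently with the vanishing of motivic cohomology in negative weights. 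This gives $E_1^{p,q,s}\cong\prod_{I_s}H^{p-p_s,q-q_s}(Y_{\bullet,\bullet},R)$, as claimed.

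For convergence, by Theorem~\ref{SC} it suffices that $\varinjlim_s H^{**}(N^s,R)\cong 0$, and I would prove the stronger statement that, for each bidegree $(q)[p]$, $H^{p,q}(N^s,R)=0$ once $s\geq l$, where $l$ is the integer furnished by the hypothesis with $(q)[p]\prec(q_l)[p_l]$; this clearly kills the colimit. Since $N^s$ is assembled from coherent motives by cones and $\DM^-_{coh}(Y_{\bullet,\bullet},R)$ is triangulated (Remark~\ref{cohloc}), $N^s$ is coherent, so Proposition~\ref{filtr} applies; applying the triangulated, coproduct-preserving functor $Lc_{\#}$ to the filtration there yields a filtration of $Lc_{\#}N^s$ with graded pieces $(Lc_{\#}N^s)_n\cong\bigoplus_{k\geq s}\bigoplus_{I_k}M(Y_{n,\bullet})(q_k)[p_k+n]$ (computed as in the proof of Proposition~\ref{post}) and a distinguished triangle $\bigoplus_n(Lc_{\#}N^s)_{\leq n}\xrightarrow{id-sh}\bigoplus_n(Lc_{\#}N^s)_{\leq n}\to Lc_{\#}N^s$. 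Rewriting $H^{p,q}(N^s,R)\cong\Hom_{\DM^-_{eff}(k,R)}(Lc_{\#}N^s,T(q)[p])$ by the adjunction $(Lc_{\#},c^*)$ and applying $\Hom_{\DM^-_{eff}(k,R)}(-,T(q)[p])$ to this triangle produces a Milnor sequence
$$
0\to\varprojlim^1_n\Hom\bigl((Lc_{\#}N^s)_{\leq n},T(q)[p-1]\bigr)\to H^{p,q}(N^s,R)\to\varprojlim_n\Hom\bigl((Lc_{\#}N^s)_{\leq n},T(q)[p]\bigr)\to 0,
$$
so it is enough that $\Hom\bigl((Lc_{\#}N^s)_{\leq n},T(q)[p']\bigr)=0$ for all $n$ and all $p'\leq p$. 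As each $(Lc_{\#}N^s)_{\leq n}$ is a finite iterated extension of the graded pieces, this reduces to $\Hom_{\DM^-_{eff}(k,R)}\bigl(M(Y_{n,\bullet})(q_k)[p_k+n],T(q)[p']\bigr)\cong H^{\,p'-p_k-n,\,q-q_k}(Y_{n,\bullet},R)=0$ for all $k\geq s\geq l$, all $n\geq 0$ and all $p'\leq p$; and this holds because $k\geq l$ forces $(q)[p]\prec(q_k)[p_k]$, so that either $q_k>q$, making the weight negative, or $q_k=q$ and $p<p_k$, making the cohomological degree $p'-p_k-n$ negative in weight $0$ --- in both cases the group vanishes.

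Hence $\varinjlim_s H^{**}(N^s,R)=0$, Theorem~\ref{SC} applies, and the spectral sequence converges strongly to $H^{**}(N^0,R)$; finally, $N^0=L\pi_{\#}(T)$ gives $Lc_{\#}N^0\cong M(X_{\bullet,\bullet})$ (as $c\circ\pi$ is the structure morphism of $X_{\bullet,\bullet}$), whence $H^{**}(N^0,R)\cong H^{**}(X_{\bullet,\bullet},R)$ by the adjunction $(Lc_{\#},c^*)$, which identifies the abutment. I expect the convergence step to be the main obstacle: Proposition~\ref{Serre} controls only the \emph{simplicial components} of $N^s$, and converting that into the genuine vanishing $H^{**}(N^s,R)=0$ for $s\gg 0$ forces one through the filtration of Proposition~\ref{filtr} and the $\varprojlim^1$-computation above, and it is precisely there that the range hypothesis on the $(q_l)[p_l]$ enters.
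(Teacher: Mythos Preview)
Your proposal is correct and follows essentially the same approach as the paper: build the exact couple from the Postnikov system of Proposition~\ref{Serre}, identify $E_1^{p,q,s}$ via $\Hom_{\DM^-_{eff}(Y_{\bullet,\bullet},R)}(T^s,T(q)[p])\cong\prod_{I_s}H^{p-p_s,q-q_s}(Y_{\bullet,\bullet},R)$, and verify $\varinjlim_s H^{**}(N^s,R)=0$ by pushing the filtration of Proposition~\ref{filtr} through $Lc_{\#}$ and using the weight/degree vanishing forced by $(q)[p]\prec(q_l)[p_l]$. The only difference is that you spell out the passage from ``vanishing on each graded piece $(Lc_{\#}N^s)_n$'' to ``vanishing on $Lc_{\#}N^s$'' via a Milnor $\varprojlim^1$-sequence, whereas the paper just writes ``Therefore''; your extra care there is warranted and harmless.
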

 \begin{proof}
 	We start by applying the construction of the exact couple associated to a Postnikov system of Section \ref{spec} to the cohomological functor $\Hom_{\DM_{eff}^-(Y_{\bullet,\bullet},R)}(-,T(q))$, for any $q$. This way, we get a spectral sequence with $E_1$-page given by 
 	$$E_1^{p,q,s}=\Hom_{\DM_{eff}^-(Y_{\bullet,\bullet},R)}(T^s,T(q)[p]) \cong \prod_{I_s} H^{p-p_s,q-q_s}(Y_{\bullet,\bullet},R).$$
 	The filtration we are considering is defined by $F^m=\ker(H^{**}(X_{\bullet,\bullet},R) \rightarrow H^{**}(N^m,R))$. In order to get the strong convergence we need to check that $\varinjlim_m H^{**}(N^m,R) \cong 0$. Since all the $N^m$ are coherent motives, by Proposition \ref{filtr} we have filtrations $(N^m)_{\leq n}$ with graded pieces $(N^m)_n \cong Lr_{n,\#}r^*_n(N^m)[n]$. Hence, we have filtrations $(Lc_{\#}N^m)_{\leq n}$ with graded pieces
 	$$(Lc_{\#}N^m)_n \cong \bigoplus_{k \geq m} \bigoplus_{I_k} M(Y_{n,\bullet})(q_k)[p_k+n].$$ 
 	Now, fix a bidegree $(q)[p]$, then by hypothesis there exists an integer $l$ such that $(q)[p] \prec (q_l)[p_l]$, from which it follows that
 	$$\Hom_{\DM_{eff}^-(k,R)}((Lc_{\#}N^l)_n,T(q)[p]) \cong 0$$
 	for any $n$. Therefore,
 	$$\Hom_{\DM_{eff}^-(k,R)}(Lc_{\#}(N^l),T(q)[p]) \cong 0$$ 
 	from which we deduce by adjunction that $H^{p,q}(N^l,R) \cong 0$ that implies, in particular, the triviality of $\varinjlim_m H^{**}(N^m,R)$. Hence, by Theorem \ref{SC} we obtain the result.
 \end{proof}
 
 The next result assures that the spectral sequence just constructed is functorial.
 
 \begin{prop}\label{Serre 3}
 	Let $\pi:X_{\bullet,\bullet} \rightarrow Y_{\bullet,\bullet}$ and $\pi':X'_{\bullet,\bullet} \rightarrow Y'_{\bullet,\bullet}$ be smooth coherent morphisms of smooth bisimplicial schemes over $k$ and $A_{\bullet}$ a smooth simplicial $k$-scheme that satisfies all conditions from Proposition \ref{Serre} with respect to $\pi'$ and such that the following square is cartesian with all morphisms smooth
 	$$
 	\xymatrix{
 		X_{\bullet,\bullet} \ar@{->}[r]^{\pi} \ar@{->}[d]_{p_X} & Y_{\bullet,\bullet} \ar@{->}[d]^{p_Y}\\
 		X'_{\bullet,\bullet} \ar@{->}[r]_{\pi'} & Y'_{\bullet,\bullet}
 	}
 	$$
 	Then, the induced morphism $Lp_{Y\#}M(X_{\bullet,\bullet}\xrightarrow{\pi}Y_{\bullet,\bullet}) \rightarrow M(X'_{\bullet,\bullet}\xrightarrow{\pi'}Y'_{\bullet,\bullet})$ in $\DM_{eff}^-(Y'_{\bullet,\bullet},R)$ extends uniquely to a morphism of Postnikov systems where, for any $j \geq 0$, $Lp_{Y\#}T^j \rightarrow T^j$ is given by $\bigoplus_{I_j} M(p_Y)(q_j)[p_j]$.
 \end{prop}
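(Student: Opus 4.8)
The plan is to push the Postnikov system of Proposition~\ref{Serre} for $\pi$ forward along $Lp_{Y\#}$ and then to construct, by an induction in the style of the proof of Proposition~\ref{post}, a morphism from it to the one for $\pi'$. Write $N^j$ (resp.\ $\bar N^j$) for the layers and $\alpha^j:N^j\to T^j$ (resp.\ $\bar\alpha^j:\bar N^j\to T^j$) for the Postnikov projections of the system of Proposition~\ref{Serre} attached to $\pi$ (resp.\ $\pi'$), and $p_{Y,i}:Y_{i,\bullet}\to Y'_{i,\bullet}$ for the morphism induced by $p_Y$ on the $i$-th simplicial component. Since $Lp_{Y\#}$ is triangulated, applying it to the system for $\pi$ yields a Postnikov system in $\DM^-_{eff}(Y'_{\bullet,\bullet},R)$ for $Lp_{Y\#}M(X_{\bullet,\bullet}\xrightarrow{\pi}Y_{\bullet,\bullet})$ with $j$-th layer $Lp_{Y\#}T^j$. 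Using that $T^j=p_Y^*T^j$ together with the projection formula for the smooth morphism $p_Y$ (valid here by \cite{cisinski.deglise}), one identifies $Lp_{Y\#}T^j\cong\bigoplus_{I_j}Lp_{Y\#}(T)(q_j)[p_j]$, and the counit of the adjunction $(Lp_{Y\#},p_Y^*)$ on $T=p_Y^*T$ provides the canonical collapse map $Lp_{Y\#}(T)\to T$; this exhibits $\psi^j:=\bigoplus_{I_j}M(p_Y)(q_j)[p_j]:Lp_{Y\#}T^j\to T^j$ as the prescribed layer map. Finally, the base map $\phi^0$ is the one of the statement: intrinsically it is $L\pi'_\#$ applied to the collapse map $Lp_{X\#}(T)\to T$, read through the base change isomorphism $Lp_{Y\#}L\pi_\#\cong L\pi'_\#Lp_{X\#}$ attached to the cartesian square.

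I would then build the morphism of Postnikov systems by induction, with the hypothesis that $\phi^j:Lp_{Y\#}N^j\to\bar N^j$ has been produced such that its restriction to the $0$-th simplicial component is, summand by summand, the collapse map: $r_0^*\phi^j=\bigoplus_{k\geq j}M(p_{Y,0})(q_k)[p_k]$ under the identifications $r_0^*Lp_{Y\#}\cong Lp_{Y,0,\#}r_0^*$ and $\bar N^j_0\cong\bigoplus_{k\geq j}T^k$. The base case $j=0$ holds because conditions (1)--(3) of Proposition~\ref{Serre} and the coherence of $\pi,\pi'$ force $\pi_0$ and $\pi'_0$ to be the projections with fibre $A_\bullet$, so $\phi^0$ reads on the $0$-th component as the collapse map $Lp_{Y,0,\#}M(A_\bullet)\to M(A_\bullet)$ through $M(A_\bullet)\cong\bigoplus_{k\geq 0}T^k$. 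For the inductive step I first check that the square with vertical maps $\phi^j,\psi^j$ and horizontal maps $\alpha^j,\bar\alpha^j$ commutes. Under the adjunction $(Lp_{Y\#},p_Y^*)$ and $p_Y^*T^j=T^j$, its two composites $Lp_{Y\#}N^j\to T^j$ correspond to two maps $N^j\to T^j$ in $\DM^-_{eff}(Y_{\bullet,\bullet},R)$: one is $\alpha^j$ itself (this uses that $\psi^j$ is the counit of the adjunction, via the triangle identity), and the other is $p_Y^*\bar\alpha^j$ composed with the adjoint of $\phi^j$. Since the exact sequence exhibited in the proof of Proposition~\ref{post} shows that $\Hom_{\DM^-_{eff}(Y_{\bullet,\bullet},R)}(N^j,T^j)$ embeds via $r_0^*$ into $\Hom_{\DM^-_{eff}(Y_{0,\bullet},R)}(N^j_0,T^j)$, it suffices to compare the two maps after restriction to the $0$-th component, where the inductive hypothesis makes both equal to the projection $\bigoplus_{k\geq j}T^k\to T^j$. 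So the square commutes, and TR3 furnishes a fill-in $\phi^{j+1}:Lp_{Y\#}N^{j+1}\to\bar N^{j+1}$ on the cones, compatible also with the connecting maps. Restricting to the $0$-th component, $r_0^*\phi^{j+1}$ is a fill-in of the restricted triangle, for which $\bigoplus_{k\geq j+1}M(p_{Y,0})(q_k)[p_k]$ is also a fill-in; the uniqueness of the fill-in over $Y_{0,\bullet}$ (the Hom-vanishing recorded below) forces the two to coincide, so the inductive hypothesis propagates.

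It remains to prove uniqueness. I would show that any morphism of Postnikov systems extending $\phi^0$ coincides with the one just built: inductively, two such agree through the $j$-th stage, so their layer maps differ by a map $Lp_{Y\#}T^j\to T^j$ killed by precomposition with $Lp_{Y\#}\alpha^j$, hence factoring through the connecting map $Lp_{Y\#}T^j\to Lp_{Y\#}N^{j+1}[1]$ and lying in the image of $\Hom_{\DM^-_{eff}(Y'_{\bullet,\bullet},R)}(Lp_{Y\#}N^{j+1}[1],T^j)$, while two fill-ins at the $(j+1)$-st stage differ by an element in the image of $\Hom_{\DM^-_{eff}(Y'_{\bullet,\bullet},R)}(Lp_{Y\#}N^{j+1},T^j[-1])$. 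By the adjunction $(Lp_{Y\#},p_Y^*)$ and $p_Y^*T^j=T^j$, both groups equal $\Hom_{\DM^-_{eff}(Y_{\bullet,\bullet},R)}(N^{j+1},T^j[-1])$, which vanishes by the argument of the proof of Proposition~\ref{post}: applying $Lc_\#$ and the filtration of Proposition~\ref{filtr}, the $n$-th associated graded piece is $\bigoplus_{k\geq j+1}\bigoplus_{I_k}M(Y_{n,\bullet})(q_k)[p_k+n]$, and $\Hom_{\DM^-_{eff}(k,R)}(M(Y_{n,\bullet})(q_k)[p_k+n],T(q_j)[p_j-1])=0$ for every $n\geq 0$ because $(q_j)[p_j]\prec(q_k)[p_k]$ for all $k\geq j+1$ (a negative weight, or weight zero in a negative degree). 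The same computation over $Y_{0,\bullet}$ supplies the fill-in uniqueness used in the inductive step.

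The step I expect to be the main obstacle is the inductive compatibility of the prescribed layer maps $\psi^j$ with the inductively produced $\phi^j$: it forces one to keep track of the precise form of $r_0^*\phi^j$ at every stage and to use base change for the $\#$-functors along the squares relating $Y_{\bullet,\bullet}$ and $Y'_{\bullet,\bullet}$ to their simplicial components, as well as the projection formula identifying $Lp_{Y\#}T^j$. Once these identifications are set up, the triangulated fill-in step (TR3) and the Hom-vanishings are routine and proceed exactly as in the proof of Proposition~\ref{post}.
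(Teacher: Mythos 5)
Your proposal is correct and follows essentially the same route as the paper: an induction over the stages of the Postnikov system, with existence/uniqueness of the fill-ins coming from the vanishing of $\Hom(Lp_{Y\#}N^{j},T^{j-1})$ and $\Hom(Lp_{Y\#}N^{j},T^{j-1}[-1])$ via adjunction, the coherent filtration of Proposition \ref{filtr} and the weight ordering, and with the layer maps pinned down as $\bigoplus_{I_j}M(p_Y)(q_j)[p_j]$ by restricting to the $0$th simplicial component and using the injectivity of $r_0^*$. The only (harmless) difference is organizational: you prescribe the collapse maps up front and verify commutativity on the $0$th component, whereas the paper first produces the unique layer maps abstractly and then identifies them with the collapse maps by the same restriction argument.
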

 \begin{proof}
 	We denote by $N^j$ the objects from the Postnikov system of $\pi$ and by $N'^j$ the ones from the Postnikov system of $\pi'$. 
 	
 	First, recall that, by Proposition \ref{filtr}, there is a filtration of $Lc_{\#}N^j$ with graded pieces 
 	$$(Lc_{\#}N^j)_n \cong \bigoplus_{k \geq j} \bigoplus_{I_k} M(Y_{n,\bullet})(q_k)[p_k+n].$$ 
 	It follows that $\Hom_{\DM_{eff}^-(k,R)}((Lc_{\#}N^j)_n,T^{j-1}[-1]) \cong 0$ and $\Hom_{\DM_{eff}^-(k,R)}((Lc_{\#}N^j)_n,T^{j-1}) \cong 0$ for any $n$ since, for any $k \geq j$, we have that $(q_{j-1})[p_{j-1}] \prec (q_k)[p_k]$ by hypothesis. Therefore, 
 	$$\Hom_{\DM_{eff}^-(Y'_{\bullet,\bullet},R)}(Lp_{Y\#}N^j,T^{j-1}[-1]) \cong \Hom_{\DM_{eff}^-(Y_{\bullet,\bullet},R)}(N^j,T^{j-1}[-1]) \cong$$
 	$$\Hom_{\DM_{eff}^-(k,R)}(Lc_{\#}N^j,T^{j-1}[-1]) \cong 0$$
 	and, similarly,
 	$$\Hom_{\DM_{eff}^-(Y'_{\bullet,\bullet},R)}(Lp_{Y\#}N^j,T^{j-1}) \cong \Hom_{\DM_{eff}^-(Y_{\bullet,\bullet},R)}(N^j,T^{j-1}) \cong$$
 	$$\Hom_{\DM_{eff}^-(k,R)}(Lc_{\#}N^j,T^{j-1}) \cong 0$$
 	from which we deduce that there are no non-trivial morphisms from $Lp_{Y\#}N^j $ to either $T^{j-1}[-1]$ or $T^{j-1}$ for any $j$.
 	
 	Now, we can construct the morphism of Postnikov systems by induction on $j$. The induction basis is provided by the square of motives in $\DM_{eff}^-(Y'_{\bullet,\bullet},R)$ induced by the geometric square of the hypothesis. Suppose by induction hypothesis that there is a morphism $Lp_{Y\#}N^{j-1} \rightarrow N'^{j-1}$. It follows that there exist unique morphisms $Lp_{Y\#}T^{j-1} \rightarrow T^{j-1}$ and $Lp_{Y\#}N^j \rightarrow N'^j$ fitting into a morphism of Postnikov systems in $\DM_{eff}^-(Y'_{\bullet,\bullet},R)$
 	$$
 	\xymatrix{
 		\dots \ar@{->}[r] &Lp_{Y\#}N^j \ar@{->}[rr] \ar@{->}[dd]& & Lp_{Y\#}N^{j-1} \ar@{->}[ld] \ar@{->}[dd] \ar@{->}[r] & \dots\\
 		& & Lp_{Y\#}T^{j-1} \ar@{->}[ul]^{[1]} \ar@{->}[dd]& &\\
 		\dots \ar@{->}[r] & N'^j \ar@{->}[rr] & & N'^{j-1} \ar@{->}[ld] \ar@{->}[r] & \dots\\
 		& & T^{j-1} \ar@{->}[ul]^{[1]}& &
 	}
 	$$
 	If we restrict our previous diagram to the $0$th simplicial component we obtain in $\DM_{eff}^-(Y'_{0,\bullet},R)$ the following morphism of Postnikov systems
 	$$
 	\xymatrix{
 		\dots \ar@{->}[r] &\bigoplus_{k \geq j}Lp_{Y_0\#}T^k \ar@{->}[rr] \ar@{->}[dd]& & \bigoplus_{k \geq j-1}Lp_{Y_0\#}T^k \ar@{->}[ld] \ar@{->}[dd] \ar@{->}[r] & \dots\\
 		& & Lp_{Y_0\#}T^{j-1} \ar@{->}[ul]^{[1]} \ar@{->}[dd]& &\\
 		\dots \ar@{->}[r] & \bigoplus_{k \geq j} T^k \ar@{->}[rr] & & \bigoplus_{k \geq j-1} T^k \ar@{->}[ld] \ar@{->}[r] & \dots\\
 		& & T^{j-1} \ar@{->}[ul]^{[1]}& &
 	}
 	$$
 	where each triangle is split. By hypothesis, the morphism $Lp_{Y_0\#}T^{j-1} \rightarrow T^{j-1}$ in the previous diagram is basically given by $\bigoplus_{I_{j-1}} M(p_{Y_0})(q_{j-1})[p_{j-1}]$, while the map $Lp_{Y_0\#}N_0^{j-1} \rightarrow N_0'^{j-1}$ is given by $\bigoplus_{k \geq {j-1}}\bigoplus_{I_k} M(p_{Y_0})(q_k)[p_k]$.
 	
 	Now, note that by Remark \ref{tcomp} the morphisms $Lp_{Y\#}T^j \rightarrow T^j$ and $\bigoplus_{I_j} M(p_Y)(q_j)[p_j]$ are both in 
 	$$\Hom_{\DM_{eff}^-(Y'_{\bullet,\bullet},R)}(Lp_{Y\#}T^j,T^j) \cong \Hom_{\DM_{eff}^-(Y_{\bullet,\bullet},R)}(T^j,p_Y^*T^j) \cong$$
 	$$\Hom_{\DM_{eff}^-(Y_{\bullet,\bullet},R)}(T^j,T^j) \cong \prod_{I_j} \bigoplus_{I_j} H^{0,0}(Y_{\bullet,\bullet},R)$$
 	and, for the same reason, $(Lp_{Y_0\#}T^j \rightarrow T^j)=\bigoplus_{I_j} M(p_{Y_0})(q_j)[p_j]$ is in 
 	$$\Hom_{\DM_{eff}^-(Y'_{0,\bullet},R)}(Lp_{Y_0\#}T^j,T^j) \cong \Hom_{\DM_{eff}^-(Y_{0,\bullet},R)}(T^j,p_{Y_0}^*T^j)
 	\cong$$
 	$$\Hom_{\DM_{eff}^-(Y_{0,\bullet},R)}(T^j,T^j) \cong \prod_{I_j} \bigoplus_{I_j} H^{0,0}(Y_{0,\bullet},R).$$
 	Recall that $H^{0,0}(Y_{\bullet,\bullet},R)$ is the free $R$-module with rank equal to the number of connected components of $Y_{\bullet,\bullet}$ and, analogously, $H^{0,0}(Y_{0,\bullet},R)$ is the free $R$-module with rank equal to the number of connected components of $Y_{0,\bullet}$. Since, as in the argument at the end of \cite[Proposition 3.4]{tanania.a}, the homomorphism
 	$$r_0^*:H^{0,0}(Y_{\bullet,\bullet},R) \rightarrow  H^{0,0}(Y_{0,\bullet},R)$$
 	is injective, we deduce that $Lp_{Y\#}T^j \rightarrow T^j$ and $\bigoplus_{I_j} M(p_Y)(q_j)[p_j]$ are identified, which completes the proof.
 \end{proof}
 
 \begin{rem}\label{csta}
 	\normalfont
 	Note that we can always ``dilute" the Postnikov system of Proposition \ref{Serre} by allowing some empty sets $I_j$. In particular, if we are in the situation of Proposition \ref{Serre 3}, but without assuming that the square is cartesian (i.e. $\pi$ and $\pi'$ both satisfy the conditions of Proposition \ref{Serre} with possibly different fibers $A_{\bullet}$ and $A'_{\bullet}$ respectively), we can stretch both the Postnikov systems for $N$ and $N'$ so that they have the same set of weights (take for example the union of the two sets of weights). Then, by the same proof of the previous proposition, we still have a unique morphism of Postnikov systems. The only thing we lose, which indeed requires the square to be cartesian, is the description of the morphisms on the slices $Lp_{Y\#}T^j \rightarrow T'^j$. 
 \end{rem}
 
 We would like to finish this section by establishing a comparison between the spectral sequence here presented and the Serre spectral sequence associated to a fiber bundle in topology. Recall that in topology for a fibre sequence
 $$F \rightarrow E \rightarrow B$$
 with $\pi_1(B)$ acting trivially on $H^*(F)$ one has a spectral sequence converging to $H^*(E)$
 $$E^{s,t}_2=H^s(B,H^t(F)) \Longrightarrow H^*(E)$$
 called Serre spectral sequence (see for example \cite[Theorem 15.27]{switzer}).
 
 Analogously, our spectral sequence allows to reconstruct somehow the cohomology of the total bisimplicial scheme from the cohomology of the base and of the fiber, provided that the fiber is motivically cellular. Moreover, the triviality condition on the $\omega^{M(X_{\bullet,\bullet} \rightarrow Y_{\bullet,\bullet})}_j$ for any $j \geq 0$ is reminiscent of the topological condition on the triviality of the action of $\pi_1(B)$ on $H^*(F)$. On the other hand, the main difference between the two spectral sequences resides in how they are obtained. In fact, while the topological Serre spectral sequence is classically achieved by filtering the base, our spectral sequence is instead realized by filtering the fiber.
 
 \section{The multiplicative structure}
 
 We are now ready to discuss the multiplicative properties of the motivic Serre spectral sequence constructed in the previous section.
 
 \begin{dfn}
 	A set of bidegrees $\{(q_j)[p_j]\}_{j\geq0}$, ordered as in Definition \ref{ord}, is called collinear if $p_iq_j=q_ip_j$ for all $i,j \geq 0$. A motive $\bigoplus_{j \geq 0} T^j$ is called collinearly weighted if its set of bidegrees is collinear.
 \end{dfn}
 
 Note that, if $A_{\bullet}$ is a simplicial scheme whose motive is $M(A_{\bullet}) \cong \bigoplus_{j \geq 0} T^j$, then $p_j \geq q_j \geq 0$ for all $j \geq 0$ (with $p_0=q_0=0$). Hence, if $M(A_{\bullet})$ is collinearly weighted, its set of bidegrees is contained in a maximal collinear set of type $\{(qk)[pk]\}_{k \geq0}$ for some relatively prime $p \geq q \geq 0$.
 
 Let $\pi:X_{\bullet,\bullet} \rightarrow Y_{\bullet,\bullet}$ be a map satisfying the conditions of Proposition \ref{Serre} with a collinearly weighted $M(A_{\bullet})$, and consider the associated diagonal map $\Delta: X_{\bullet,\bullet} \rightarrow X_{\bullet,\bullet}  \times_{Y_{\bullet,\bullet}} X_{\bullet,\bullet}$. Note that the sets of weights of $M(A_{\bullet})$ and $M(A_{\bullet} \times A_{\bullet})$ are both contained in the same maximal collinear set.
 
 Now, construct the Postnikov system for $N$, according to Proposition \ref{Serre}, with $M(A_{\bullet}) \cong \bigoplus_{k \geq 0} T^k$ where $\{(q_k)[p_k]\}_{k \geq 0}$ is the corresponding maximal collinear set of weights (we are allowing some $I_k$ to be empty). Then, consider the Postnikov system for $N \otimes N$ parametrized in the same way, provided by Proposition \ref{Serre},  whose slices are given by
 $$(N \otimes N)^k/(N \otimes N)^{k+1} = Cone((N \otimes N)^{k+1} \rightarrow (N \otimes N)^k) \cong \bigoplus_{i+j=k}T^i \otimes T^j.$$
 
 By Remark \ref{csta}, the morphism $N \rightarrow N \otimes N$, induced by $\Delta$, extends uniquely to a morphism of Postnikov systems $N^k \rightarrow (N \otimes N)^k$ in $\DM_{eff}^-(Y_{\bullet,\bullet},R)$.
 
 At this point, we would like to produce some Cartan-Eilenberg systems out of these Postnikov systems in order to study the multiplicative structure of our spectral sequence. The standard reference for Cartan-Eilenberg systems and multiplicative structure of spectral sequences is \cite{douady}, but in this section we will mainly refer to \cite{rognes} where general definitions and full proofs can be found.  We start with some preliminary results.
 
 \begin{rem} \label{needit}
 	\normalfont
 	Let $M$ and $N$ be extensions of Tate motives such that the lowest bidegree of the slices of $M$ is strictly greater than the greatest bidegree of the slices of $N$ (with respect to the order in Definition \ref{ord}), then there are no non-trivial morphisms from $M$ to $N$, since the motivic cohomology groups of simplicial schemes are trivial in negative motivic weights, or motivic weight 0 and negative topological degrees. 
 \end{rem}
 
 \begin{lem}\label{eta1}
 	For any $i \leq i'$ and $j \leq j'$ satisfying $i \leq j$ and $i' \leq j'$, there exists a unique 
 	$$\eta: N^{i'+1}/N^{j'+1} \rightarrow N^{i+1}/N^{j+1}$$
 	such that the diagram
 	$$
 	\xymatrix{
 		N^{i'+1} \ar@{->}[r]\ar@{->}[d] & 	N^{i'+1}/N^{j'+1}  \ar@{->}[d]^{\eta}\\
 		N^{i+1} \ar@{->}[r] &  N^{i+1}/N^{j+1} 
 	}
 	$$ 
 	is commutative.
 \end{lem}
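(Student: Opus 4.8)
The plan is the usual diagram chase in the triangulated category $\DM^-_{eff}(Y_{\bullet,\bullet},R)$: build $\eta$ by completing a commutative square of Postnikov structure maps to a morphism of distinguished triangles, and deduce uniqueness from a vanishing of $\Hom$-groups of exactly the type already exploited in the proofs of Propositions \ref{post}, \ref{Serre} and Theorem \ref{Serre2}. To set this up I would fix, for $m\le n$, the distinguished triangle
\[
N^{n+1}\longrightarrow N^{m+1}\longrightarrow N^{m+1}/N^{n+1}\longrightarrow N^{n+1}[1]
\]
in which $N^{n+1}\to N^{m+1}$ is the composite of the structure maps $N^{k+1}\to N^{k}$ ($m\le k\le n$) from the Postnikov system of Proposition \ref{Serre}; then $N^{m+1}/N^{n+1}$ is a finite iterated extension of $T^{m+1},\dots,T^{n}$ (the zero object if $m=n$). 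Since $i\le j\le j'$ and $i\le i'\le j'$, all the indices in play are comparable, and by associativity of composition the square
\[
\xymatrix{
N^{j'+1} \ar@{->}[r] \ar@{->}[d] & N^{i'+1} \ar@{->}[d]\\
N^{j+1} \ar@{->}[r] & N^{i+1}
}
\]
of structure-map composites commutes, both ways around being the composite of all structure maps from level $j'+1$ down to level $i+1$.

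For existence I would consider $f\colon N^{i'+1}\to N^{i+1}\to N^{i+1}/N^{j+1}$, the structure map followed by the canonical map. Using the commuting square, $f$ precomposed with the structure map $N^{j'+1}\to N^{i'+1}$ equals $N^{j'+1}\to N^{j+1}\to N^{i+1}\to N^{i+1}/N^{j+1}$, whose last two arrows are consecutive in the triangle $N^{j+1}\to N^{i+1}\to N^{i+1}/N^{j+1}\to N^{j+1}[1]$ and therefore compose to $0$. Applying $\Hom_{\DM^-_{eff}(Y_{\bullet,\bullet},R)}(-,N^{i+1}/N^{j+1})$ to the triangle $N^{j'+1}\to N^{i'+1}\to N^{i'+1}/N^{j'+1}\to N^{j'+1}[1]$, exactness then shows that $f$ lies in the image of the map $\Hom(N^{i'+1}/N^{j'+1},N^{i+1}/N^{j+1})\to\Hom(N^{i'+1},N^{i+1}/N^{j+1})$ induced by $N^{i'+1}\to N^{i'+1}/N^{j'+1}$, and any preimage is a morphism $\eta$ making the square of the statement commute. (One may instead simply invoke the axiom of triangulated categories completing a morphism of the first two terms of two distinguished triangles to a morphism of triangles, cf. \cite{gelfand.manin}.)

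For uniqueness, two such $\eta,\eta'$ have a difference $\delta$ that vanishes after precomposition with $N^{i'+1}\to N^{i'+1}/N^{j'+1}$, hence by the long exact sequence of the last triangle $\delta$ comes from $\Hom_{\DM^-_{eff}(Y_{\bullet,\bullet},R)}(N^{j'+1}[1],N^{i+1}/N^{j+1})\cong\Hom_{\DM^-_{eff}(Y_{\bullet,\bullet},R)}(N^{j'+1},(N^{i+1}/N^{j+1})[-1])$, so it suffices that this group vanish. Here $N^{j'+1}$ is, via the tail of its own Postnikov system (converging as in Proposition \ref{filtr}), an extension of the Tate motives $T^{m}$ with $m\ge j'+1$, whose lowest bidegree is $(q_{j'+1})[p_{j'+1}]$, while $(N^{i+1}/N^{j+1})[-1]$ is a finite extension of $T^{i+1}[-1],\dots,T^{j}[-1]$, whose greatest bidegree is $(q_{j})[p_{j}-1]$; as $j<j'+1$ we get $(q_{j})[p_{j}-1]\prec(q_{j})[p_{j}]\prec(q_{j'+1})[p_{j'+1}]$, and Remark \ref{needit} makes the $\Hom$-group zero, so $\delta=0$. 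Everything but this vanishing is pure triangulated calculus, so the hard part is really just the bidegree bookkeeping that puts us in the situation of Remark \ref{needit}; should one wish to avoid quoting that Remark for an infinite extension, the truncation/telescope reduction used in the proof of Theorem \ref{Serre2} (passing through $Lc_{\#}$ and Proposition \ref{filtr}) brings the needed vanishing down to the triviality of motivic cohomology of simplicial schemes in negative weights and in weight $0$ with negative topological degree, which holds because $i\le j\le j'$ and the standing hypothesis $(q_m)[p_m]\prec(q_{m+1})[p_{m+1}]$.
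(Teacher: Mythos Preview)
Your argument is correct and follows the same route as the paper: complete the commutative square of Postnikov structure maps to a morphism of distinguished triangles for existence, and obtain uniqueness from the vanishing of $\Hom_{\DM^-_{eff}(Y_{\bullet,\bullet},R)}(N^{j'+1}[1],N^{i+1}/N^{j+1})$ via Remark~\ref{needit}. The paper simply states these two steps without your explicit bidegree check or the aside about reducing infinite extensions through $Lc_{\#}$ and Proposition~\ref{filtr}, but the substance is identical.
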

 \begin{proof}
 	Consider the diagram
 	$$
 	\xymatrix{
 		N^{j'+1}  \ar@{->}[r]\ar@{->}[d] & 	N^{i'+1}\ar@{->}[r]\ar@{->}[d] & N^{i'+1}/N^{j'+1} \ar@{->}[r]\ar@{->}[d]^{\eta} &N^{j'+1}[1] \ar@{->}[d]\\
 		N^{j+1}  \ar@{->}[r] & N^{i+1} \ar@{->}[r] & N^{i+1}/N^{j+1} \ar@{->}[r] & N^{j+1}[1]
 	}
 	$$ 
 	obtained by completing the leftmost commutative square to a morphism of distinguished triangles. The map $\eta$ is uniquely determined since there are no non-trivial morphisms from $N^{j'+1}[1]$ to $N^{i+1}/N^{j+1}$ by Remark \ref{needit}.
 \end{proof}
 
 \begin{lem}\label{cefunct}
 	For any $i \leq i' \leq i''$ and $j \leq j' \leq j''$ satisfying $i \leq j$, $i' \leq j'$ and $i'' \leq j''$, the composite
 	$$N^{i''+1}/N^{j''+1} \xrightarrow{\eta} N^{i'+1}/N^{j'+1} \xrightarrow{\eta} N^{i+1}/N^{j+1}$$
 	equals $\eta: N^{i''+1}/N^{j''+1} \rightarrow N^{i+1}/N^{j+1}$. 
 \end{lem}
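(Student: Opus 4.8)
The plan is to deduce the statement purely from the uniqueness clause of Lemma \ref{eta1}. Write $\eta_1 : N^{i''+1}/N^{j''+1} \rightarrow N^{i'+1}/N^{j'+1}$ and $\eta_2 : N^{i'+1}/N^{j'+1} \rightarrow N^{i+1}/N^{j+1}$ for the two maps occurring in the composite, and let $\eta : N^{i''+1}/N^{j''+1} \rightarrow N^{i+1}/N^{j+1}$ be the morphism obtained by applying Lemma \ref{eta1} directly to the pair of indices $(i,j)$ and $(i'',j'')$. First I would check that the numerical hypotheses of Lemma \ref{eta1} are satisfied for each of $\eta_1$, $\eta_2$ and $\eta$; this is immediate from $i \leq i' \leq i''$, $j \leq j' \leq j''$, $i \leq j$, $i' \leq j'$ and $i'' \leq j''$. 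By construction $\eta$ is characterised as the \emph{unique} morphism making commutative the square whose horizontal arrows are the canonical projections and whose left vertical arrow is the structure map $N^{i''+1} \rightarrow N^{i+1}$ of the Postnikov system.

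It therefore suffices to verify that $\eta_2 \circ \eta_1$ also makes that square commute, for then uniqueness forces $\eta_2 \circ \eta_1 = \eta$. To this end I would stack the defining square of $\eta_1$ on top of that of $\eta_2$, obtaining the commutative diagram
$$
\xymatrix{
	N^{i''+1} \ar@{->}[r]\ar@{->}[d] & 	N^{i''+1}/N^{j''+1}  \ar@{->}[d]^{\eta_1}\\
	N^{i'+1} \ar@{->}[r]\ar@{->}[d] &  N^{i'+1}/N^{j'+1} \ar@{->}[d]^{\eta_2}\\
	N^{i+1} \ar@{->}[r] &  N^{i+1}/N^{j+1}
}
$$
in which the rows are the natural projections and the left-hand column consists of structure maps of the Postnikov system. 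The outer rectangle is then commutative, its rows are the projections, its right vertical arrow is $\eta_2 \circ \eta_1$, and its left vertical arrow is the composite $N^{i''+1} \rightarrow N^{i'+1} \rightarrow N^{i+1}$. Since the structure maps of a Postnikov system compose canonically, this composite is precisely the single structure map $N^{i''+1} \rightarrow N^{i+1}$. Hence the outer rectangle is exactly the square characterising $\eta$, but with $\eta$ replaced by $\eta_2 \circ \eta_1$, and uniqueness in Lemma \ref{eta1} yields the claim.

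I do not expect a genuine obstacle here: the argument is formal once Lemma \ref{eta1} is in place, the uniqueness there resting in turn on the $\Hom$-vanishing of Remark \ref{needit}. The one point requiring a line of care is the identification of the left-hand column of the stacked diagram with the single Postnikov structure map $N^{i''+1} \rightarrow N^{i+1}$, i.e. associativity of composition of the structure morphisms; everything else is diagram pasting.
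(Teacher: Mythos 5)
Your argument is correct and is precisely the one the paper intends: its proof of Lemma \ref{cefunct} simply says it follows from Lemma \ref{eta1}, and your pasting of the two defining squares plus the uniqueness clause (resting on the vanishing in Remark \ref{needit}) is the standard way to fill that in. The one point you flag, that the composite $N^{i''+1}\rightarrow N^{i'+1}\rightarrow N^{i+1}$ is the structure map $N^{i''+1}\rightarrow N^{i+1}$, is indeed immediate since all of these are by definition composites of the same chain of one-step Postnikov maps.
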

 \begin{proof}
 	It follows easily from Lemma \ref{eta1}.
 \end{proof}
 
 For any $i \leq j \leq k$, denote by $\delta$ the composite $N^{i+1}/N^{j+1} \rightarrow N^{j+1}[1] \rightarrow N^{j+1}/N^{k+1}[1]$.
 
 \begin{lem}\label{cenat}
 	For any $i \leq i'$, $j \leq j'$ and $k \leq k'$ satisfying $i \leq j \leq k$ and $i' \leq j' \leq k''$, the diagram 
 	$$
 	\xymatrix{
 		N^{i'+1}/N^{j'+1} \ar@{->}[r]^{\delta}\ar@{->}[d]_{\eta} & 	N^{j'+1}/N^{k'+1}[1]  \ar@{->}[d]^{\eta}\\
 		N^{i+1}/N^{j+1} \ar@{->}[r]_{\delta} &  N^{j+1}/N^{k+1}[1] 
 	}
 	$$ 
 	is commutative.
 \end{lem}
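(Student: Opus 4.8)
The plan is to decompose each of the two occurrences of $\delta$ as the connecting morphism of a distinguished triangle followed by a shifted canonical projection, and then to push $\eta$ past these two pieces one at a time, invoking the two characterising properties of the comparison maps $\eta$ furnished by Lemma \ref{eta1}. Throughout I would fix the following local notation: for $a \le b$ write $w_{a,b}: N^{a+1}/N^{b+1} \rightarrow N^{b+1}[1]$ for the third map in the defining distinguished triangle $N^{b+1} \rightarrow N^{a+1} \rightarrow N^{a+1}/N^{b+1} \rightarrow N^{b+1}[1]$, and $c_{a,b}: N^{a+1} \rightarrow N^{a+1}/N^{b+1}$ for its canonical map; and for $a \le a'$ write $g_{a',a}: N^{a'+1} \rightarrow N^{a+1}$ for the canonical composite of structural maps of the Postnikov system. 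With this notation $\delta = c_{j,k}[1] \circ w_{i,j}$ on $N^{i+1}/N^{j+1}$ and $\delta = c_{j',k'}[1] \circ w_{i',j'}$ on $N^{i'+1}/N^{j'+1}$.

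First I would extract from the proof of Lemma \ref{eta1} the identity
$$
w_{i,j} \circ \eta \;=\; g_{j',j}[1] \circ w_{i',j'}
$$
for the left-hand vertical $\eta: N^{i'+1}/N^{j'+1} \rightarrow N^{i+1}/N^{j+1}$: indeed $\eta$ is constructed there as the third arrow of a morphism of distinguished triangles whose first vertical arrow is the canonical map $g_{j',j}$, so its fourth vertical arrow is $g_{j',j}[1]$, and the displayed relation is exactly the commutativity of the rightmost square of that triangle morphism. Secondly, I would apply Lemma \ref{eta1} with its indices $(i,j,i',j')$ replaced by $(j,k,j',k')$ — admissible because $j \le j'$, $k \le k'$, $j \le k$ and $j' \le k'$ — to obtain the comparison map $\eta: N^{j'+1}/N^{k'+1} \rightarrow N^{j+1}/N^{k+1}$, whose defining commutative square reads $\eta \circ c_{j',k'} = c_{j,k} \circ g_{j',j}$; shifting by $[1]$ gives $\eta[1] \circ c_{j',k'}[1] = c_{j,k}[1] \circ g_{j',j}[1]$. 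The vertical arrow labelled $\eta$ on the right of the square in the statement is precisely this $\eta[1]$.

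It then only remains to chain the two identities:
$$
\delta \circ \eta \;=\; c_{j,k}[1] \circ w_{i,j} \circ \eta \;=\; c_{j,k}[1] \circ g_{j',j}[1] \circ w_{i',j'} \;=\; \eta[1] \circ c_{j',k'}[1] \circ w_{i',j'} \;=\; \eta[1] \circ \delta,
$$
where on the left $\eta$ and $\delta$ denote the left and bottom arrows of the square and on the right $\eta[1]$ and $\delta$ denote its right and top arrows; this is the asserted commutativity. The computation is a routine diagram chase, so I do not expect any genuine obstacle; the one point deserving attention is the bookkeeping, namely checking that the structural map $g_{j',j}: N^{j'+1} \rightarrow N^{j+1}$ appearing as the fourth vertical of the triangle morphism defining the left-hand $\eta$ is literally the same map that intertwines the projections $c_{j',k'}$ and $c_{j,k}$ in the square defining the right-hand $\eta$. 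Since all the comparison maps involved are the unique ones provided by Lemma \ref{eta1} (uniqueness coming from Remark \ref{needit}) and $g_{j',j}$ is a fixed composite of structural maps, no ambiguity arises; compatibility of these identifications under iteration is ensured by the functoriality recorded in Lemma \ref{cefunct}.
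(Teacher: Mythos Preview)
Your proposal is correct and follows essentially the same route as the paper: factor $\delta$ through $N^{j+1}[1]$ (respectively $N^{j'+1}[1]$), verify the resulting two squares commute via Lemma~\ref{eta1}, and compose. The paper's proof is terser---it simply displays the two squares and invokes Lemma~\ref{eta1}---while you spell out explicitly that the left square is the rightmost square of the triangle morphism constructed in the proof of Lemma~\ref{eta1} and the right square is the shift of the defining square of Lemma~\ref{eta1} applied with indices $(j,k,j',k')$; but the content is identical.
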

 \begin{proof}
 	By Lemma \ref{eta1} we have two commutative squares
 	$$
 	\xymatrix{
 		N^{i'+1}/N^{j'+1} \ar@{->}[r]\ar@{->}[d]_{\eta} & 	N^{j'+1}[1] \ar@{->}[r]\ar@{->}[d] & 	N^{j'+1}/N^{k'+1}[1]  \ar@{->}[d]^{\eta}\\
 		N^{i+1}/N^{j+1} \ar@{->}[r]& 	N^{j+1}[1] \ar@{->}[r] &  N^{j+1}/N^{k+1}[1] 
 	}
 	$$ 
 	where the horizontal composites are both $\delta$ by definition.
 \end{proof}
 
 \begin{lem}\label{ceexact}
 	For any $i$, $j$ and $k$ satisfying $i \leq j \leq k$, there is a distinguished triangle
 	$$N^{j+1}/N^{k+1} \xrightarrow{\eta} N^{i+1}/N^{k+1} \xrightarrow{\eta}N^{i+1}/N^{j+1}\xrightarrow{\delta}N^{j+1}/N^{k+1}[1].$$
 \end{lem}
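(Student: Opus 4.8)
The plan is to obtain the distinguished triangle directly from the octahedral axiom applied to the relevant composable pair of Postnikov maps, and then to recognise the three structure maps of the resulting triangle as $\eta$, $\eta$ and $\delta$ by invoking the uniqueness clause of Lemma \ref{eta1} together with the definition of $\delta$. Recall from the proof of Lemma \ref{eta1} that, for any $a \leq b$, the object $N^{a+1}/N^{b+1}$ sits in a distinguished triangle $N^{b+1} \rightarrow N^{a+1} \rightarrow N^{a+1}/N^{b+1} \rightarrow N^{b+1}[1]$ whose first map is the composite of the maps of the Postnikov system. Fixing $i \leq j \leq k$, I would form the composable pair $N^{k+1} \xrightarrow{g} N^{j+1} \xrightarrow{f} N^{i+1}$ of such composites, with $fg$ the composite $N^{k+1} \rightarrow N^{i+1}$. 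Applying the octahedral axiom to $f$ and $g$, whose cones are $Cone(g) = N^{j+1}/N^{k+1}$, $Cone(f) = N^{i+1}/N^{j+1}$ and $Cone(fg) = N^{i+1}/N^{k+1}$, produces a distinguished triangle $N^{j+1}/N^{k+1} \rightarrow N^{i+1}/N^{k+1} \rightarrow N^{i+1}/N^{j+1} \rightarrow N^{j+1}/N^{k+1}[1]$, together with the standard compatibilities of these three maps with the cone maps and with $f$.

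It then remains to identify the three maps. The first one, $N^{j+1}/N^{k+1} \rightarrow N^{i+1}/N^{k+1}$, makes commutative the square with edges $N^{j+1} \rightarrow N^{j+1}/N^{k+1}$, $f \colon N^{j+1} \rightarrow N^{i+1}$ and $N^{i+1} \rightarrow N^{i+1}/N^{k+1}$; this is exactly the square appearing in Lemma \ref{eta1} for the parameters $(i',j') = (j,k)$ and $(i,j) = (i,k)$, so by the uniqueness there it must be $\eta$. The second one, $N^{i+1}/N^{k+1} \rightarrow N^{i+1}/N^{j+1}$, satisfies $(N^{i+1} \rightarrow N^{i+1}/N^{k+1} \rightarrow N^{i+1}/N^{j+1}) = (N^{i+1} \rightarrow N^{i+1}/N^{j+1})$, which is the square of Lemma \ref{eta1} for $(i',j') = (i,k)$ and $(i,j) = (i,j)$, with the left-hand edge being $\mathrm{id}_{N^{i+1}}$, hence it too is $\eta$. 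Finally, the octahedral connecting map $N^{i+1}/N^{j+1} \rightarrow N^{j+1}/N^{k+1}[1]$ factors as $N^{i+1}/N^{j+1} \rightarrow N^{j+1}[1] \rightarrow N^{j+1}/N^{k+1}[1]$, where the first arrow is the connecting map of the triangle $N^{j+1} \rightarrow N^{i+1} \rightarrow N^{i+1}/N^{j+1} \rightarrow N^{j+1}[1]$ and the second is the shift of the quotient $N^{j+1} \rightarrow N^{j+1}/N^{k+1}$; this composite is by definition $\delta$ as introduced before Lemma \ref{cenat}, so no further argument is needed for this map.

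I expect the only delicate part to be this bookkeeping: one must check that the commutative squares furnished by the octahedral axiom coincide on the nose with those characterising $\eta$ in Lemma \ref{eta1}, so that its uniqueness statement — which ultimately rests on the Hom-vanishing of Remark \ref{needit} — can legitimately be applied. Once that matching is carried out, the lemma follows immediately.
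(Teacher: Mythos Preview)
Your proof is correct and follows essentially the same route as the paper: both arguments apply the octahedral axiom to the composable pair $N^{k+1} \to N^{j+1} \to N^{i+1}$ (the paper phrases this as completing the leftmost commutative square to a morphism of distinguished triangles, which is a standard reformulation), and then identify the resulting maps via the uniqueness in Lemma~\ref{eta1} and the definition of $\delta$. Your version is in fact a bit more explicit about the bookkeeping involved in matching the octahedral compatibilities with the characterising squares of Lemma~\ref{eta1}.
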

 \begin{proof}
 	Consider the diagram
 	$$
 	\xymatrix{
 		N^{j+1}  \ar@{->}[r]\ar@{->}[d] & 	N^{i+1}\ar@{->}[r]\ar@{->}[d] & N^{i+1}/N^{j+1} \ar@{->}[r]\ar@{=}[d] &N^{j+1}[1] \ar@{->}[d]\\
 		N^{j+1}/N^{k+1}  \ar@{->}[r]_{\eta} & N^{i+1}/N^{k+1} \ar@{->}[r] & N^{i+1}/N^{j+1} \ar@{->}[r] & N^{j+1}/N^{k+1}[1]
 	}
 	$$ 
 	obtained by completing the leftmost commutative square to a morphism of distinguished triangles. Then, by Lemma \ref{eta1} the middle bottom horizontal map is also $\eta$, while the right bottom horizontal map is $\delta$ by definition.
 \end{proof}
 
 \begin{prop}
 	The motivic cohomology groups
 	$$H^{**}(s,t)=H^{**}(N^{-t+1}/N^{-s+1})$$
 	for all $s \leq t$, together with the homomorphisms $\eta^*$ and $\delta^*$ respectively induced by $\eta$ and $\delta$, constitute a cohomological Cartan-Eilenberg system \cite[Definition 6.1.1]{rognes}.
 \end{prop}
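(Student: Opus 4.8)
The plan is to check, one axiom at a time, the definition of a cohomological Cartan--Eilenberg system \cite[Definition 6.1.1]{rognes} against Lemmas \ref{eta1}--\ref{ceexact}, using that $H^{**}(-,R)$ is a cohomological functor. Recall from the start of Section \ref{spec} that $H^{**}(-,R)$ is additive and contravariant, takes values in the abelian category $\A$ of left $H^{**}(Y_{\bullet,\bullet},R)$-modules, sends distinguished triangles to long exact sequences, and satisfies $H^{**}(M[1]) \cong H^{*-1,*}(M)$. Setting $\eta^* := H^{**}(\eta)$ and $\delta^* := H^{**}(\delta)$ (these are $H^{**}(Y_{\bullet,\bullet},R)$-linear, being induced by morphisms in $\DM^-_{eff}(Y_{\bullet,\bullet},R)$ with the module action given by post-composition), the dictionary $N^{i+1}/N^{j+1} \leftrightarrow (s,t)=(-j,-i)$ — under which $i \le j$ corresponds to $s \le t$, and the boundary values are covered by $N^0=N$ and $N^{\infty}:=0$ — turns the $\eta$ of Lemma \ref{eta1} into $\eta^*: H^{**}(s,t) \to H^{**}(s',t')$ for $s' \le s$, $t' \le t$, and the connecting maps $\delta$ introduced just before Lemma \ref{cenat} into $\delta^*: H^{**}(s,t) \to H^{*+1,*}(t,u)$ for $s \le t \le u$.

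With this dictionary the verification is routine. The normalisation $\eta^* = \mathrm{id}$ when the source and target indices coincide holds because the identity makes the defining square of Lemma \ref{eta1} commute and that lemma asserts uniqueness of such a morphism. The functoriality $\eta^* \circ \eta^* = \eta^*$ for nested triples is Lemma \ref{cefunct} with $H^{**}$ applied. The naturality of $\delta^*$ with respect to $\eta^*$ is the image under $H^{**}$ of the commutative square of Lemma \ref{cenat}.

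For the exactness axiom, apply the cohomological functor $H^{**}(-,R)$ to the distinguished triangle
$$N^{j+1}/N^{k+1} \xrightarrow{\eta} N^{i+1}/N^{k+1} \xrightarrow{\eta} N^{i+1}/N^{j+1} \xrightarrow{\delta} N^{j+1}/N^{k+1}[1]$$
of Lemma \ref{ceexact}. In the $(s,t,u)$ coordinates with $s \le t \le u$ this yields the long exact sequence
$$\cdots \to H^{p,q}(t,u) \xrightarrow{\eta^*} H^{p,q}(s,u) \xrightarrow{\eta^*} H^{p,q}(s,t) \xrightarrow{\delta^*} H^{p+1,q}(t,u) \to \cdots,$$
the degree shift on $\delta^*$ coming from $H^{**}((N^{j+1}/N^{k+1})[1]) \cong H^{*-1,*}(N^{j+1}/N^{k+1})$. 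This is precisely the exact sequence demanded by \cite[Definition 6.1.1]{rognes}, so all the axioms are in place.

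I do not anticipate a genuine obstacle: the content is entirely in Lemmas \ref{eta1}--\ref{ceexact} together with the cohomological nature of $H^{**}(-,R)$. The only delicate point is bookkeeping — fixing the index translation and the degree shift so that $\delta^*$ raises the cohomological degree by one, as befits the \emph{cohomological} (rather than homological) flavour of a Cartan--Eilenberg system, and handling the boundary indices through $N^0 = N$ and $N^{\infty} = 0$.
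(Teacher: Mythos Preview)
Your proposal is correct and follows essentially the same approach as the paper, which simply records that the result is a direct consequence of Lemmas \ref{cefunct}, \ref{cenat} and \ref{ceexact}. You have merely spelled out the index dictionary and the verification of each axiom in more detail than the paper does.
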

 \begin{proof}
 	The result is a direct consequence of Lemmas \ref{cefunct}, \ref{cenat} and \ref{ceexact}.
 \end{proof}
 
 Our aim now is to produce a pairing of Cartan-Eilenberg systems. In order to do so, we need some technical lemmas. 
 
 \begin{lem}\label{fitis}
 	For any $i$, $j$ and $k$ satisfying $i+j \leq k$, there exists a unique 
 	$$f:(N \otimes N)/(N \otimes N)^{k+1} \rightarrow N/N^{i+1} \otimes N/N^{j+1}$$
 	such that the diagram
 	$$
 	\xymatrix{
 		N \otimes N \ar@{->}[r] \ar@{->}[d] & (N \otimes N)/(N \otimes N)^{k+1} \ar@{->}[ld]^{f}\\
 		N/N^{i+1} \otimes N/N^{j+1} & 
 	}
 	$$
 	is commutative.
 \end{lem}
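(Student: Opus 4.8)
The plan is to produce $f$ by a standard lifting argument against a distinguished triangle, so that the whole statement reduces to the vanishing of two $\Hom$-groups; these vanishings will in turn follow from the weights of the Tate slices involved together with Remark \ref{needit}. Write $Z := N/N^{i+1}\otimes N/N^{j+1}$, let $\phi : N\otimes N \to Z$ be the tensor product of the two canonical projections $N \to N/N^{i+1}$ and $N \to N/N^{j+1}$ (this is the left-hand vertical map of the diagram in the statement), abbreviate $P := (N\otimes N)^{k+1}$ and $Q := (N\otimes N)/(N\otimes N)^{k+1}$, and let $\iota : P \to N\otimes N$ and $p : N\otimes N \to Q$ be the morphisms in the distinguished triangle $P \xrightarrow{\iota} N\otimes N \xrightarrow{p} Q \to P[1]$. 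Applying the cohomological functor $\Hom_{\DM_{eff}^-(Y_{\bullet,\bullet},R)}(-,Z)$ to this triangle yields an exact sequence
$$\Hom(P[1],Z) \to \Hom(Q,Z) \xrightarrow{p^*} \Hom(N\otimes N,Z) \xrightarrow{\iota^*} \Hom(P,Z).$$
It therefore suffices to show that $\Hom(P,Z)=0$, which forces $\iota^*\phi = \phi\circ\iota = 0$ and hence produces a lift $f \in \Hom(Q,Z)$ with $p^*(f) = f\circ p = \phi$, and that $\Hom(P[1],Z)=0$, which makes $p^*$ injective and hence $f$ unique.

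Both vanishings follow by comparing weights. By construction of the Postnikov system in Proposition \ref{Serre}, $N/N^{i+1}$ is an iterated extension of $T^0,\dots,T^i$ and $N/N^{j+1}$ of $T^0,\dots,T^j$; since the tensor product on $\DM_{eff}^-(Y_{\bullet,\bullet},R)$ is exact, $Z$ is therefore an iterated extension of the motives $T^a\otimes T^b$ with $0\le a\le i$ and $0\le b\le j$. By collinearity each such $T^a\otimes T^b$ is a direct sum of Tate motives of bidegree $(q_a+q_b)[p_a+p_b]=(q_{a+b})[p_{a+b}]$, and $a+b\le i+j\le k$, so every Tate slice of $Z$ has bidegree $\preceq (q_k)[p_k]$. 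On the other hand, by the construction of the Postnikov system for $N\otimes N$ (again Proposition \ref{Serre}, parametrized by the maximal collinear set of weights), the degree-$l$ slice of the corresponding filtration is $\bigoplus_{a+b=l}T^a\otimes T^b$, a direct sum of Tate motives of bidegree $(q_l)[p_l]$; since $P=(N\otimes N)^{k+1}$ is built out of the slices with $l\ge k+1$, it is an iterated extension of Tate motives whose lowest bidegree is $(q_{k+1})[p_{k+1}]$. Because the weights are strictly increasing, $(q_k)[p_k]\prec(q_{k+1})[p_{k+1}]$, so the lowest bidegree of a slice of $P$ is strictly greater than the greatest bidegree of a slice of $Z$, and Remark \ref{needit} gives $\Hom(P,Z)=0$. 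The slices of $P[1]$ are Tate motives of bidegree $(q_l)[p_l+1]$ with $l\ge k+1$, which are again all $\succ(q_k)[p_k]$, so Remark \ref{needit} likewise yields $\Hom(P[1],Z)=0$.

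This produces the unique $f$ with $f\circ p=\phi$, which is precisely the commutativity asserted in the statement. The one place that requires care is the weight bookkeeping in the previous paragraph: one must check that tensoring the quotients $N/N^{i+1}$ and $N/N^{j+1}$ can create only Tate summands of collinear parameter at most $i+j\le k$, whereas $P=(N\otimes N)^{k+1}$ involves exclusively Tate summands of collinear parameter $\ge k+1$. This is exactly where the hypothesis $i+j\le k$ and the collinear weighting of $M(A_{\bullet})$ enter, and it is what forces the two $\Hom$-groups above to vanish; the remaining steps are formal triangulated-category manipulations.
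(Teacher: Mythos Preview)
Your proof is correct and follows essentially the same approach as the paper: the paper's proof is a one-line invocation of Remark \ref{needit} to kill the two $\Hom$-groups $\Hom((N\otimes N)^{k+1},Z)$ and $\Hom((N\otimes N)^{k+1}[1],Z)$, and you have simply unpacked in detail why that remark applies by tracking the collinear weights on both sides. The extra bookkeeping you include (identifying the slices of $Z$ as $T^a\otimes T^b$ with $a+b\le k$ and those of $P$ as having collinear parameter $\ge k+1$) is exactly the content the paper leaves implicit in its citation of Remark \ref{needit}.
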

 \begin{proof}
 	The map $f$ exists and is unique since there are no non-trivial morphisms from either $(N \otimes N)^{k+1}$ or $(N \otimes N)^{k+1}[1]$ to $N/N^{i+1} \otimes N/N^{j+1}$ by Remark \ref{needit}.
 \end{proof}
 
 From now on, we will denote by $\overline{\eta}$ and $\overline{\delta}$ the maps attached to the Postnikov system for $N \otimes N$.
 
 \begin{lem}\label{a}
 	For any $i\leq i'$, $j\leq j'$ and $k \leq k'$ satisfying $i+j \leq k$ and $i'+j' \leq k'$, the diagram
 	$$
 	\xymatrix{
 		(N \otimes N)/(N \otimes N)^{k'+1} \ar@{->}[r]^{\overline{\eta}} \ar@{->}[d]_{f} & (N \otimes N)/(N \otimes N)^{k+1} \ar@{->}[d]^{f}\\
 		N/N^{i'+1} \otimes N/N^{j'+1} \ar@{->}[r]_{\eta \otimes \eta} & N/N^{i+1} \otimes N/N^{j+1}
 	}
 	$$
 	is commutative and the composites equal $f$.
 \end{lem}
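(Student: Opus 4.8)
The plan is to reduce everything to the uniqueness of the structure maps furnished by Lemma \ref{fitis}. First I would observe that, since $i+j\le k\le k'$, Lemma \ref{fitis} applied to the triple $(i,j,k')$ produces a morphism $f\colon (N\otimes N)/(N\otimes N)^{k'+1}\to N/N^{i+1}\otimes N/N^{j+1}$, and that this $f$ is \emph{characterised} as the unique such morphism whose precomposition with the projection $N\otimes N\to (N\otimes N)/(N\otimes N)^{k'+1}$ is the canonical projection $N\otimes N\to N/N^{i+1}\otimes N/N^{j+1}$ (the tensor of $N\to N/N^{i+1}$ and $N\to N/N^{j+1}$). Uniqueness comes, exactly as in the proof of Lemma \ref{fitis}, from applying $\Hom(-,N/N^{i+1}\otimes N/N^{j+1})$ to the triangle $(N\otimes N)^{k'+1}\to N\otimes N\to (N\otimes N)/(N\otimes N)^{k'+1}\to (N\otimes N)^{k'+1}[1]$ and invoking Remark \ref{needit}: since $M(A_{\bullet})$ is collinearly weighted, $T^a\otimes T^b$ has bidegree $(q_{a+b})[p_{a+b}]$, so the slices of $(N\otimes N)^{k'+1}$ have lowest bidegree $(q_{k'+1})[p_{k'+1}]$ while those of $N/N^{i+1}\otimes N/N^{j+1}$ have greatest bidegree $(q_{i+j})[p_{i+j}]\prec(q_{k'+1})[p_{k'+1}]$, whence the two outer $\Hom$-groups vanish. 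Thus it suffices to check that $f\circ\overline{\eta}$ and $(\eta\otimes\eta)\circ f$ each become the canonical projection after precomposition with $N\otimes N\to (N\otimes N)/(N\otimes N)^{k'+1}$.

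For the first composite, I would note that $\overline{\eta}$ is an instance of Lemma \ref{eta1} for the Postnikov system of $N\otimes N$ with the degenerate choice of indices making the left vertical map of the defining square an identity, so the composite $N\otimes N\to (N\otimes N)/(N\otimes N)^{k'+1}\xrightarrow{\overline{\eta}}(N\otimes N)/(N\otimes N)^{k+1}$ is the canonical projection; postcomposing with the Lemma \ref{fitis}-map $f$ for the triple $(i,j,k)$ then gives the canonical projection to $N/N^{i+1}\otimes N/N^{j+1}$ by the defining property of that $f$. For the second composite, Lemma \ref{fitis} for the triple $(i',j',k')$ says that $N\otimes N\to (N\otimes N)/(N\otimes N)^{k'+1}\xrightarrow{f}N/N^{i'+1}\otimes N/N^{j'+1}$ is the tensor of the projections $N\to N/N^{i'+1}$ and $N\to N/N^{j'+1}$; postcomposing with $\eta\otimes\eta$, using that each $\eta$ is again an instance of Lemma \ref{eta1} with degenerate indices (so $\eta$ is compatible with the projections $N\to N/N^{i'+1}\to N/N^{i+1}$ and similarly on the other factor) together with bifunctoriality of $\otimes$, yields once more the canonical projection. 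By the uniqueness from the first paragraph, $f\circ\overline{\eta}=(\eta\otimes\eta)\circ f=f$, which is the claim.

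I do not anticipate a genuine obstacle here: the statement is formal once the right universal properties are in place, and the only real care needed is the index bookkeeping — keeping track of which of the triples $(i,j,k)$, $(i',j',k')$, $(i,j,k')$ each invocation of Lemma \ref{fitis} refers to, and recognising $\overline{\eta}$ and $\eta$ as special cases of Lemma \ref{eta1} in which one pair of indices is taken minimal so that the comparison square has an identity on the left. The single substantive ingredient is, as throughout this section, the bidegree vanishing of Remark \ref{needit}, which is where collinearity of $M(A_{\bullet})$ enters.
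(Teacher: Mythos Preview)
Your argument is correct and is precisely the unpacking the paper intends: the paper's own proof is the single line ``It follows easily from Lemma~\ref{fitis},'' and what you have written is exactly how that line cashes out. The only comment is that you need not invoke Lemma~\ref{eta1} separately for $\overline{\eta}$ and $\eta$ here, since in the degenerate case (top index $-\infty$, i.e.\ quotients of $N$ itself) the compatibility with the projection from $N\otimes N$ or $N$ is immediate; but this is a matter of taste, not substance.
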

 \begin{proof}
 	It follows easily from Lemma \ref{fitis}.
 \end{proof}
 
 \begin{lem}\label{b}
 	For any $i\leq i'$, $j\leq j'$ and $k \leq k'$ satisfying $i+j' \leq k$, $i'+j \leq k$ and $i'+j' \leq k'$, there exists a unique 
 	$$F:(N \otimes N)^{k+1}/(N \otimes N)^{k'+1} \rightarrow N^{i+1}/N^{i'+1} \otimes N^{j+1}/N^{j'+1} $$ 
 	such that the diagram 
 	$$
 	\xymatrix{
 		(N \otimes N)^{k+1}/(N \otimes N)^{k'+1} \ar@{->}[r]^{\overline{\eta}} \ar@{->}[d]_{F} & (N \otimes N)/(N \otimes N)^{k'+1} \ar@{->}[d]^{f}\\
 		N^{i+1}/N^{i'+1} \otimes N^{j+1}/N^{j'+1} \ar@{->}[r]_{\eta \otimes \eta} & N/N^{i'+1} \otimes N/N^{j'+1}
 	}
 	$$
 	is commutative.
 \end{lem}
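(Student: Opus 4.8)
The plan is to realize $F$ as the unique factorization through $\eta\otimes\eta$ of the composite
$$g:=f\circ\overline{\eta}\colon (N\otimes N)^{k+1}/(N\otimes N)^{k'+1}\longrightarrow N/N^{i'+1}\otimes N/N^{j'+1},$$
which is available because $f$ itself exists here by Lemma \ref{fitis} (the hypothesis $i'+j'\le k'$ is exactly what that lemma requires). So everything reduces to controlling the cone $C$ of $\eta\otimes\eta$ well enough to apply Remark \ref{needit}.

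First I would factor $\eta\otimes\eta$ as
$$N^{i+1}/N^{i'+1}\otimes N^{j+1}/N^{j'+1}\xrightarrow{1\otimes\eta}N^{i+1}/N^{i'+1}\otimes N/N^{j'+1}\xrightarrow{\eta\otimes 1}N/N^{i'+1}\otimes N/N^{j'+1}.$$
The cone of $\eta\colon N^{j+1}/N^{j'+1}\to N/N^{j'+1}$ is $N/N^{j+1}$ and the cone of $\eta\colon N^{i+1}/N^{i'+1}\to N/N^{i'+1}$ is $N/N^{i+1}$ (cf. Lemma \ref{ceexact}); since the tensor product is exact in each variable, the cones of the two maps above are $N^{i+1}/N^{i'+1}\otimes N/N^{j+1}$ and $N/N^{i+1}\otimes N/N^{j'+1}$ respectively. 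By the octahedral axiom, $C$ then sits in a distinguished triangle whose remaining two terms are a shift of $N^{i+1}/N^{i'+1}\otimes N/N^{j+1}$ and $N/N^{i+1}\otimes N/N^{j'+1}$. Now use that $M(A_{\bullet})$ is collinearly weighted, so that the slice $T^a\otimes T^b$ has index $a+b$ in the maximal collinear set of weights: the first motive is an extension of Tate motives whose slices have index at most $i'+j$, and the second is an extension of Tate motives whose slices have index at most $i+j'$. Since $i'+j\le k$ and $i+j'\le k$ by hypothesis, $C$ is an extension of Tate motives all of whose slices have index at most $k$.

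On the other hand, $(N\otimes N)^{k+1}/(N\otimes N)^{k'+1}$ is an extension of the motives $\bigoplus_{a+b=m}T^a\otimes T^b$ with $k+1\le m\le k'$, so it is an extension of Tate motives all of whose slices have index at least $k+1$. Applying Remark \ref{needit} to $C$, and to $C[-1]$ as well (whose slices have even smaller bidegrees), gives
$$\Hom_{\DM_{eff}^-(Y_{\bullet,\bullet},R)}\!\big((N\otimes N)^{k+1}/(N\otimes N)^{k'+1},C[-1]\big)=0=\Hom_{\DM_{eff}^-(Y_{\bullet,\bullet},R)}\!\big((N\otimes N)^{k+1}/(N\otimes N)^{k'+1},C\big).$$
Feeding the distinguished triangle $N^{i+1}/N^{i'+1}\otimes N^{j+1}/N^{j'+1}\xrightarrow{\eta\otimes\eta}N/N^{i'+1}\otimes N/N^{j'+1}\to C\to$ into the cohomological functor $\Hom_{\DM_{eff}^-(Y_{\bullet,\bullet},R)}\big((N\otimes N)^{k+1}/(N\otimes N)^{k'+1},-\big)$, these two vanishings make post-composition with $\eta\otimes\eta$ a bijection on morphisms out of $(N\otimes N)^{k+1}/(N\otimes N)^{k'+1}$. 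Hence there is a unique $F$ with $(\eta\otimes\eta)\circ F=g=f\circ\overline{\eta}$, which is exactly the asserted commutativity and uniqueness.

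The step I expect to be the real obstacle is the middle one: verifying that the two cones produced by the factorization of $\eta\otimes\eta$ are honestly concentrated in indices $\le i'+j$ and $\le i+j'$. This is precisely where collinearity of $M(A_{\bullet})$ is used in an essential way — without it the bidegrees appearing in a tensor product of Tate motives need not be linearly ordered, and Remark \ref{needit} would not be applicable. The rest is formal diagram chasing together with the vanishing of motivic cohomology of simplicial schemes in negative weights (and, in weight zero, in negative topological degrees).
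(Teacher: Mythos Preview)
Your argument is correct and is in fact more economical than the paper's. Both proofs agree on uniqueness: the paper's homotopy pullback $P'$ sits in the triangle
\[
N^{i+1}/N^{i'+1}\otimes N^{j+1}/N^{j'+1}\xrightarrow{\eta\otimes\eta} N/N^{i'+1}\otimes N/N^{j'+1}\to P'\to,
\]
so $P'$ is exactly your cone $C$, and the paper invokes the same vanishing $\Hom(-,P'[-1])=0$ via Remark \ref{needit}. Where you diverge is in the existence step. The paper constructs $F$ explicitly: it forms the homotopy pushout $P$ of $\eta\otimes id$ and $id\otimes\eta$, produces three auxiliary maps $g,g',g''$ by completing three separate commutative squares to morphisms of triangles, checks that $\phi g'=\psi g''=g$, and then factors $({g'\atop g''})$ through the fiber $N^{i+1}/N^{i'+1}\otimes N^{j+1}/N^{j'+1}$ of $(\phi,-\psi)$. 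You bypass all of this by also proving $\Hom(-,C)=0$, which together with $\Hom(-,C[-1])=0$ makes $(\eta\otimes\eta)_*$ a bijection and yields existence and uniqueness in one stroke. The octahedral decomposition of $C$ you use, with pieces bounded by indices $i'+j$ and $i+j'$, is precisely what makes this work and is the clean way to exploit the two separate hypotheses $i'+j\le k$ and $i+j'\le k$. The paper's route is more constructive but that extra information is not used downstream; your approach is a legitimate simplification.
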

 \begin{proof}
 	Let $P$ be the homotopy pushout
 	$$
 	\xymatrix{
 		N^{i+1}/N^{i'+1} \otimes N^{j+1}/N^{j'+1} \ar@{->}[r]^{\eta \otimes id} \ar@{->}[d]_{id \otimes \eta} &  	N/N^{i'+1} \otimes N^{j+1}/N^{j'+1}\ar@{->}[d]^{\psi}\\
 		N^{i+1}/N^{i'+1} \otimes N/N^{j'+1} \ar@{->}[r]_{\phi} & P}
 	$$
 	and consider the diagrams
 	$$
 	\xymatrix{
 		(N \otimes N)/(N \otimes N)^{k'+1} \ar@{->}[r]^{\overline{\eta}}\ar@{->}[d]_{f} & 	(N \otimes N)/(N \otimes N)^{k+1}\ar@{->}[r]^-{\overline{\delta}}\ar@{->}[d]^{f} & (N \otimes N)^{k+1}/(N \otimes N)^{k'+1}[1] \ar@{->}[r]^-{\overline{\eta}}\ar@{->}[d]^{g} &\\
 		N/N^{i'+1} \otimes N/N^{j'+1} \ar@{->}[r]_{\eta \otimes \eta} & N/N^{i+1} \otimes N/N^{j+1} \ar@{->}[r] & P[1] \ar@{->}[r] & ,
 	}
 	$$ 
 	$$
 	\xymatrix{
 		(N \otimes N)/(N \otimes N)^{k'+1} \ar@{->}[r]^{\overline{\eta}}\ar@{->}[d]_{f} & 	(N \otimes N)/(N \otimes N)^{k+1}\ar@{->}[r]^-{\overline{\delta}}\ar@{->}[d]^{f} & (N \otimes N)^{k+1}/(N \otimes N)^{k'+1}[1] \ar@{->}[r]^-{\overline{\eta}}\ar@{->}[d]^{g'} &\\
 		N/N^{i'+1} \otimes N/N^{j'+1} \ar@{->}[r]_{\eta \otimes id} & N/N^{i+1} \otimes N/N^{j'+1} \ar@{->}[r]_-{\delta \otimes id} & N^{i+1}/N^{i'+1} \otimes N/N^{j'+1}[1] \ar@{->}[r]_-{\eta \otimes id} & 
 	}
 	$$ 
 	and
 	$$
 	\xymatrix{
 		(N \otimes N)/(N \otimes N)^{k'+1} \ar@{->}[r]^{\overline{\eta}}\ar@{->}[d]_{f} & 	(N \otimes N)/(N \otimes N)^{k+1}\ar@{->}[r]^-{\overline{\delta}}\ar@{->}[d]^{f} & (N \otimes N)^{k+1}/(N \otimes N)^{k'+1}[1] \ar@{->}[r]^-{\overline{\eta}}\ar@{->}[d]^{g''} &\\
 		N/N^{i'+1} \otimes N/N^{j'+1} \ar@{->}[r]_{id \otimes \eta} & N/N^{i'+1} \otimes N/N^{j+1} \ar@{->}[r]_-{id \otimes \delta} & N/N^{i'+1} \otimes N^{j+1}/N^{j'+1}[1] \ar@{->}[r]_-{id \otimes \eta} & 
 	}
 	$$ 
 	obtained by completing the leftmost commutative squares to morphisms of distinguished triangles. Note that the maps $g$, $g'$ and $g''$ are uniquely determined since there are no non-trivial morphisms from $(N \otimes N)^{k+1}/(N \otimes N)^{k'+1}[1]$ to either $N/N^{i+1} \otimes N/N^{j+1}$, $N/N^{i+1} \otimes N/N^{j'+1}$ or $N/N^{i'+1} \otimes N/N^{j+1}$ by Remark \ref{needit}. 
 	
 	Hence, the composite
 	$$(N \otimes N)^{k+1}/(N \otimes N)^{k'+1} \xrightarrow{ ({g' \atop g''})}  (N^{i+1}/N^{i'+1} \otimes N/N^{j'+1}) \oplus (N/N^{i'+1} \otimes N^{j+1}/N^{j'+1}) \xrightarrow{(\phi, -\psi)} P$$
 	is trivial since both $\phi g'$ and $\psi g''$ equal $g$. It follows that there exists a map
 	$$F:(N \otimes N)^{k+1}/(N \otimes N)^{k'+1} \rightarrow N^{i+1}/N^{i'+1} \otimes N^{j+1}/N^{j'+1} $$ 
 	such that the composite $({id \otimes \eta \atop \eta \otimes id}) \circ F$ equals $({g' \atop g''})$.
 	
 	Therefore, we have that
 	$$(\eta \otimes \eta) \circ F= (\eta \otimes id) \circ (id \otimes \eta) \circ F=(\eta \otimes id) \circ g'= f \circ \overline{\eta}.$$
 	
 	Let $P'$ be the homotopy pullback
 	$$
 	\xymatrix{
 		P'\ar@{->}[r] \ar@{->}[d] &  	N/N^{i'+1} \otimes N/N^{j+1}\ar@{->}[d]^{\eta \otimes id}\\
 		N/N^{i+1} \otimes N/N^{j'+1} \ar@{->}[r]_{id \otimes \eta} & 	N/N^{i+1} \otimes N/N^{j+1}}
 	$$
 	and consider the distinguished triangle
 	$$N^{i+1}/N^{i'+1} \otimes N^{j+1}/N^{j'+1} \xrightarrow{\eta \otimes \eta} N/N^{i'+1} \otimes N/N^{j'+1} \rightarrow P' \rightarrow N^{i+1}/N^{i'+1} \otimes N^{j+1}/N^{j'+1}[1].$$
 	Since there are no non-trivial morphisms from $(N \otimes N)^{k+1}/(N \otimes N)^{k'+1}$ to $P'[-1]$ by Remark \ref{needit}, we conclude that $F$ is uniquely determined.
 \end{proof}
 
 \begin{prop}\label{spp1}
 	For any $i\leq i'$, $j\leq j'$, $k \leq k'$, $l \leq l'$, $m \leq m'$ and $n \leq n'$ satisfying $i+j' \leq k$, $i'+j \leq k$, $i'+j' \leq k'$, $l+m' \leq n$, $l'+m \leq n$, $l'+m' \leq n'$, $i\leq l$, $j\leq m$, $k \leq n$, $i'\leq l'$, $j'\leq m'$ and $k' \leq n'$, the diagram
 	$$
 	\xymatrix{
 		(N \otimes N)^{n+1}/(N \otimes N)^{n'+1} \ar@{->}[r]^{\overline{\eta}} \ar@{->}[d]_{F} & (N \otimes N)^{k+1}/(N \otimes N)^{k'+1} \ar@{->}[d]^{F}\\
 		N^{l+1}/N^{l'+1} \otimes N^{m+1}/N^{m'+1} \ar@{->}[r]_{\eta \otimes \eta} & N^{i+1}/N^{i'+1} \otimes N^{j+1}/N^{j'+1}
 	}
 	$$
 	is commutative and the composites equal $F$.
 \end{prop}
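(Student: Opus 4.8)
The plan is to reduce everything to the uniqueness clause in Lemma \ref{b}. First I would observe that the inequalities in the hypothesis chain down — e.g. $i+j'\le k\le n$, $i'+j\le k\le n$ and $i'+j'\le k'\le n'$ — so that Lemma \ref{b} also provides a map, call it $F'$, from $(N\otimes N)^{n+1}/(N\otimes N)^{n'+1}$ to $N^{i+1}/N^{i'+1}\otimes N^{j+1}/N^{j'+1}$ attached to the sextuple $(i,i',j,j',n,n')$; this $F'$ is precisely the map that the two composites in the square are claimed to equal. The uniqueness of $F'$ in Lemma \ref{b} uses only the vanishing $\Hom((N\otimes N)^{n+1}/(N\otimes N)^{n'+1},P'[-1])\cong 0$ supplied by Remark \ref{needit}, where $P'$ is the homotopy pullback attached to the target indices $(i,i',j,j')$ (the same $P'$ as in Lemma \ref{b}, since those indices are unchanged). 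Hence \emph{any} morphism $(N\otimes N)^{n+1}/(N\otimes N)^{n'+1}\to N^{i+1}/N^{i'+1}\otimes N^{j+1}/N^{j'+1}$ is determined by its composite with $\eta\otimes\eta:N^{i+1}/N^{i'+1}\otimes N^{j+1}/N^{j'+1}\to N/N^{i'+1}\otimes N/N^{j'+1}$, so it suffices to prove that $F\circ\overline{\eta}$, $(\eta\otimes\eta)\circ F$ and $F'$ all become equal after post-composition with this $\eta\otimes\eta$.

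Then I would carry out the two diagram chases. For $F\circ\overline{\eta}$: post-composing with $\eta\otimes\eta$ and invoking the defining property of $F$ from Lemma \ref{b} turns it into $f\circ\overline{\eta}\circ\overline{\eta}$; the $N\otimes N$-analogue of Lemma \ref{cefunct} contracts $\overline{\eta}\circ\overline{\eta}$ to a single $\overline{\eta}$, and Lemma \ref{fitis} (equivalently Lemma \ref{a}) absorbs the change of top index into $f$, leaving the canonical $f\circ\overline{\eta}$. For $(\eta\otimes\eta)\circ F$: post-composing with the second $\eta\otimes\eta$ and using the functoriality of $\eta$ (Lemma \ref{cefunct}) fuses the two copies of $\eta\otimes\eta$ into one, after which the defining property of $F$ and Lemma \ref{a} again produce $f\circ\overline{\eta}$; finally $(\eta\otimes\eta)\circ F'=f\circ\overline{\eta}$ is the characterisation of $F'$. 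Since all three composites coincide, the uniqueness from the first step yields $(\eta\otimes\eta)\circ F=F\circ\overline{\eta}=F'$, which is the claim.

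The main obstacle I anticipate is purely bookkeeping: checking the Hom-vanishing $\Hom((N\otimes N)^{n+1}/(N\otimes N)^{n'+1},P'[-1])\cong 0$ needed in the first step (and the analogous vanishings that permit the uses of Lemma \ref{a} with the enlarged source). This is formally the same computation as in the proof of Lemma \ref{b}, but with a source whose slices begin one level higher; it is exactly here that the long list of inequalities in the statement, together with the collinearity hypothesis on $M(A_{\bullet})$, is used to ensure via Remark \ref{needit} that every slice of the source sits in a strictly larger bidegree — in the order of Definition \ref{ord} — than every slice of the target. Granting this, the remainder is a routine diagram chase through the functoriality relations of Lemmas \ref{eta1}, \ref{cefunct}, \ref{fitis} and \ref{a} and their $N\otimes N$-counterparts.
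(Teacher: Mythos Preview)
Your proposal is correct and is precisely the argument the paper has in mind: the proof in the paper reads in full ``It follows easily from Lemma \ref{b},'' and your reduction to the uniqueness clause of Lemma \ref{b}, together with the functoriality relations of Lemmas \ref{cefunct}, \ref{fitis} and \ref{a}, is exactly how one unpacks that sentence. The bookkeeping worry you flag is harmless, since the hypotheses $k\le n$ and $k'\le n'$ only push the slices of the source higher, so the Hom-vanishing from Remark \ref{needit} used in Lemma \ref{b} holds \emph{a fortiori}.
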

 \begin{proof}
 	It follows easily from Lemma \ref{b}.
 \end{proof}

 \begin{prop}\label{spp2}
 	For any $i$, $j$ and $r \geq 1$, in the diagram
 	$$
 	\xymatrixcolsep{0.001pc}
 	\xymatrix{
 		(N \otimes N)^{i+j-r}/(N \otimes N)^{i+j-r+1} \ar@{->}[rr]^{F} \ar@{->}[dd]_{F} \ar@{->}[dr]^{\overline{\delta}}& & N^{i}/N^{i+1} \otimes N^{j-r}/N^{j-r+1} \ar@{->}[dd]_{\eta \otimes \delta}\\
 		& (N\otimes N)^{i+j-r+1}/(N \otimes N)^{i+j+1}[1]\ar@{->}[dr]^{F}&\\
 		N^{i-r}/N^{i-r+1} \otimes N^{j}/N^{j+1} \ar@{->}[rr]_{\delta \otimes \eta} &&N^{i-r+1}/N^{i+1} \otimes N^{j-r+1}/N^{j+1}[1]
 	}
 	$$
 	the diagonal composite equals the sum of the
 	two outer composites.
 \end{prop}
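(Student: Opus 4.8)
The plan is to reduce the claimed identity to a rigidity statement and then verify it by chasing the octahedra already constructed in the proof of Lemma \ref{b}. Write $m=i+j-r$, let $S=(N\otimes N)^{m}/(N\otimes N)^{m+1}$ be the common source of the three composites in the hexagon, and set $U=N^{i-r+1}/N^{i+1}\otimes N^{j-r+1}/N^{j+1}$, so that $\alpha:=F\circ\overline\delta$ and $\beta:=(\eta\otimes\delta)\circ F+(\delta\otimes\eta)\circ F$ both lie in $\Hom_{\DM^-_{eff}(Y_{\bullet,\bullet},R)}(S,U[1])$. First I would record a rigidity lemma: a morphism $S\to U[1]$ which becomes trivial after composition with both projections $U[1]\to N^{i-r+1}/N^{i+1}\otimes N/N^{j+1}[1]$ and $U[1]\to N/N^{i+1}\otimes N^{j-r+1}/N^{j+1}[1]$ (obtained by applying $\eta$ to one tensor factor) is itself trivial. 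Indeed such a morphism factors through the total fibre of the corresponding commutative square, which by Lemma \ref{ceexact} is $\big(N/N^{i-r+1}\otimes N/N^{j-r+1}\big)[-1]$; every slice of this object lies in a bidegree strictly smaller (in the order of Definition \ref{ord}) than $(q_m)[p_m]$ --- this is where the collinearity of the weights of $M(A_\bullet)$ enters --- whereas every slice of $S$ lies in bidegree $(q_m)[p_m]$, so the $\Hom$ vanishes by Remark \ref{needit}. Hence it is enough to show that $\alpha$ and $\beta$ agree after composing with each of the two projections.

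I would do the first verification as follows, the second being symmetric. The bottom map $F$ of the hexagon is the morphism produced by Lemma \ref{b} with $i'=i$, $j'=j$ and the lemma's remaining indices equal to $i-r$ and $j-r$; by its construction one has $(\mathrm{id}\otimes\eta)\circ F=g'$, where $g'$ is the map in the octahedral diagram governed by $\delta\otimes\mathrm{id}$, so that $g'\circ\overline\delta_0=(\delta\otimes\mathrm{id})\circ f$ with $f$ the Lemma \ref{fitis} map and $\overline\delta_0$ the connecting map of the Cartan--Eilenberg system of $N\otimes N$ on $(N\otimes N)/(N\otimes N)^{m+1}$. Using $\overline\delta=\overline\delta_0\circ\eta$, which is legitimate by Lemma \ref{cenat}, this gives $(\mathrm{id}\otimes\eta)\circ\alpha=(\delta\otimes\mathrm{id})\circ f\circ\eta$. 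On the $\beta$ side, $(\mathrm{id}\otimes\eta)\circ(\eta\otimes\delta)\circ F=\big(\eta\otimes(\eta\circ\delta)\big)\circ F$ vanishes because $\eta\circ\delta=0$, consecutive maps of a distinguished triangle composing to zero, while $(\mathrm{id}\otimes\eta)\circ(\delta\otimes\eta)\circ F=(\delta\otimes\eta)\circ F$ after collapsing $\eta\circ\eta=\eta$ by Lemma \ref{cefunct}, the last $F$ now being the left-hand map of the hexagon. It then remains to identify $(\delta\otimes\mathrm{id})\circ f\circ\eta$ with $(\delta\otimes\eta)\circ F$, which I would do by factoring the first $\delta$ through the second via Lemma \ref{cenat} and appealing to the uniqueness clauses of Lemma \ref{fitis} and Lemma \ref{b} --- the same device that yields Lemma \ref{a} and Proposition \ref{spp1}.

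The hard part will be precisely this last identification, which is a compatibility between the morphisms $F$ attached by Lemma \ref{b} to different tuples of indices and which forces one to keep careful track of which slices of $S$ each of them detects; this, like the rigidity lemma above, genuinely uses the collinearity hypothesis. Collinearity also guarantees that all the Koszul signs occurring along the way are trivial, so that the two outer composites add rather than cancel, exactly as stated. Apart from that the argument is a long but routine diagram chase, assembled from Lemmas \ref{cefunct}, \ref{cenat} and \ref{ceexact} for $N$, their evident analogues for $N\otimes N$, and the explicit constructions in the proofs of Lemmas \ref{fitis}, \ref{a} and \ref{b} and of Proposition \ref{spp1}.
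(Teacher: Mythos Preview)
Your overall strategy --- detect $\alpha-\beta$ by composing with two ``projections'' $\text{id}\otimes\eta$ and $\eta\otimes\text{id}$ out of $U[1]$ --- rests on the rigidity lemma you state at the outset, and that lemma is where the argument breaks. In a triangulated category, knowing that $a\gamma=0$ and $b\gamma=0$ for a map $\gamma:S\to U[1]$ does \emph{not} force $\gamma$ to factor through the total fibre of the square. Concretely, write $X=N/N^{i+1}\otimes N^{j-r+1}/N^{j+1}$, $Y=N^{i-r+1}/N^{i+1}\otimes N/N^{j+1}$, $Z=N/N^{i+1}\otimes N/N^{j+1}$; the octahedron for $U\to P\to X\oplus Y$ (with $P$ the pullback) gives a triangle $A_0\otimes B_0\to\text{fib}(U\to X\oplus Y)\to Z[-1]$, so that $\Hom(S,\text{fib}(U\to X\oplus Y)[1])\cong\Hom(S,Z)$ after you use your vanishing for $A_0\otimes B_0$. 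But $Z$ has slices of weight up to $i+j=m+r>m$, hence $\Hom(S,Z)\ne 0$ in general; this is exactly the obstruction to lifting $\gamma$ through the total fibre, and it does not vanish. Your two verifications (that $(\text{id}\otimes\eta)\alpha=(\text{id}\otimes\eta)\beta$ and symmetrically) are fine, but they do not suffice to conclude $\alpha=\beta$.

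The paper avoids this by never projecting \emph{out} of $U[1]$. Instead it forms the homotopy pullback $P'$ of $N^{i-r}/N^{i+1}\otimes N^{j-r}/N^{j-r+1}\to N^{i-r}/N^{i-r+1}\otimes N^{j-r}/N^{j-r+1}\leftarrow N^{i-r}/N^{i-r+1}\otimes N^{j-r}/N^{j+1}$, so that $U\to N^{i-r}/N^{i+1}\otimes N^{j-r}/N^{j+1}\to P'\xrightarrow{\partial}U[1]$ is a triangle; the commutative square of $F$'s coming from Proposition~\ref{spp1} then produces a \emph{unique} $G:S\to P'$ on cones (uniqueness because $P'$ has no slice above weight $m$, so $\Hom((N\otimes N)^{m+1}/(N\otimes N)^{i+j+1}[1],P')=0$). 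A second uniqueness argument factors $G=(\alpha,\beta)\circ\binom{F}{F}$ through an explicit pair of maps into $P'$, and a third identifies $\partial\circ(\alpha,\beta)=(\eta\otimes\delta,\delta\otimes\eta)$; putting these together yields $F\circ\overline\delta=\partial\circ G=(\eta\otimes\delta)\circ F+(\delta\otimes\eta)\circ F$. The point is that every ``rigidity'' step in the paper involves only objects whose highest weight is at most $m$, which is what makes the needed $\Hom$-groups vanish; your square places the bottom-right corner $Z$ at weight $m+r$, and that is precisely why your detection principle fails. (Your closing remark that collinearity kills Koszul signs is also off: the signs in $\eta\otimes\delta$ and $\delta\otimes\eta$ are governed by the monoidal structure, not by the weights.)
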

 \begin{proof}
 	Let $P'$ be the homotopy pullback
 	$$
 	\xymatrix{
 		P'\ar@{->}[r]^{\psi'} \ar@{->}[d]_{\phi'} &  	N^{i-r}/N^{i-r+1} \otimes N^{j-r}/N^{j+1}\ar@{->}[d]^{id \otimes \eta}\\
 		N^{i-r}/N^{i+1} \otimes N^{j-r}/N^{j-r+1} \ar@{->}[r]_{\eta \otimes id} & 	N^{i-r}/N^{i-r+1} \otimes N^{j-r}/N^{j-r+1}}
 	$$
 	and consider the distinguished triangle
 	$$N^{i-r+1}/N^{i+1} \otimes N^{j-r+1}/N^{j+1} \xrightarrow{\eta \otimes \eta} N^{i-r}/N^{i+1} \otimes N^{j-r}/N^{j+1} \rightarrow P' \xrightarrow{\partial} N^{i-r+1}/N^{i+1} \otimes N^{j-r+1}/N^{j+1}[1].$$
 	Then, the commutative square
 	$$
 	\xymatrix{
 		(N \otimes N)^{i+j-r+1}/(N \otimes N)^{i+j+1} \ar@{->}[r]^{\overline{\eta}}\ar@{->}[d]_{F} & 	(N \otimes N)^{i+j-r}/(N \otimes N)^{i+j+1}\ar@{->}[d]^{F} \\
 		N^{i-r+1}/N^{i+1} \otimes N^{j-r+1}/N^{j+1} \ar@{->}[r]_{\eta \otimes \eta} & N^{i-r}/N^{i+1} \otimes N^{j-r}/N^{j+1}  
 	}
 	$$ 
 	induces a unique $G:(N \otimes N)^{i+j-r}/(N \otimes N)^{i+j-r+1} \rightarrow P'$ on the cones, since there are no non-trivial morphisms from $(N \otimes N)^{i+j-r+1}/(N \otimes N)^{i+j+1}[1]$ to $P'$ by Remark \ref{needit}. 
 	
 	Note that the map
 	$$(N^{i}/N^{i+1}\otimes N^{j-r}/ N^{j-r+1})\oplus(N^{i-r}/N^{i-r+1}\otimes N^{j}/ N^{j+1}) \xrightarrow{({\eta \otimes id \atop id \otimes \eta})} N^{i-r}/N^{i-r+1}\otimes N^{j-r}/N^{j-r+1}$$
 	factoring through $(N^{i-r}/N^{i+1}\otimes N^{j-r}/ N^{j-r+1})\oplus(N^{i-r}/N^{i-r+1}\otimes N^{j-r}/ N^{j+1})$ is trivial by Remark \ref{needit}. Hence, there exists a unique 
 	$$(\alpha,\beta): (N^{i}/N^{i+1}\otimes N^{j-r}/ N^{j-r+1})\oplus(N^{i-r}/N^{i-r+1}\otimes N^{j}/ N^{j+1}) \rightarrow P'$$
 	such that $({\phi' \atop \psi'}) \circ (\alpha,\beta) = ({\eta \otimes id \atop 0}{0 \atop id \otimes \eta})$. For a similar reason, there exists a unique $G':(N \otimes N)^{i+j-r}/(N \otimes N)^{i+j-r+1} \rightarrow P'$ such that $({\phi' \atop \psi'}) \circ G' = ({F \atop F})$. Since both $({\phi' \atop \psi'}) \circ G = ({F \atop F})$ and $({\phi' \atop \psi'}) \circ (\alpha,\beta) \circ ({F \atop F})= ({F \atop F})$, it follows that $G = (\alpha,\beta) \circ ({F \atop F})$.
 	
 	Therefore, we deduce that
 	$$F \circ \overline{\delta}=\partial \circ G= \partial \circ (\alpha,\beta) \circ \left({F \atop F}\right).$$
 	At this point we only need to identify $\partial \circ (\alpha,\beta)$ with $(\eta \otimes \delta,\delta \otimes \eta)$. 
 	
 	Note that the commutative square
 	$$
 	\xymatrix{
 		N^i/N^{i+1} \otimes N^{j-r+1}/N^{j+1}\ar@{->}[r]^{id \otimes \eta }\ar@{->}[d]_{\eta \otimes id} & N^i/N^{i+1} \otimes N^{j-r}/N^{j+1}\ar@{->}[d]^{\eta \otimes id} \\
 		N^{i-r+1}/N^{i+1} \otimes N^{j-r+1}/N^{j+1} \ar@{->}[r]_{\eta \otimes \eta} & N^{i-r}/N^{i+1} \otimes N^{j-r}/N^{j+1}
 	}
 	$$ 
 	induces a unique map $\alpha': N^i/N^{i+1} \otimes N^{j-r}/N^{j-r+1} \rightarrow P'$ on the cones since there are no non-trivial morphisms from $N^i/N^{i+1} \otimes N^{j-r+1}/N^{j+1}[1]$ to $P'$ by Remark \ref{needit}. For the same reason we have a unique $\beta': N^{i-r}/N^{i-r+1} \otimes N^{j}/N^{j+1} \rightarrow P'$, and $\partial \circ (\alpha',\beta')=(\eta \otimes \delta,\delta \otimes \eta)$. On the other hand $({\phi' \atop \psi'}) \circ (\alpha',\beta') = ({\eta \otimes id \atop 0}{0 \atop id \otimes \eta})$, hence $(\alpha',\beta')=(\alpha,\beta)$ that completes the proof.
 \end{proof}
 
 \begin{thm}\label{mult}
 	Let $\pi:X_{\bullet,\bullet} \rightarrow Y_{\bullet,\bullet}$ be a smooth coherent morphism satisfying all conditions of Proposition \ref{Serre} with a collinearly weighted $M(A_{\bullet})$. Then, the spectral sequence, induced by the Postnikov system parametrized with the corresponding maximal collinear set of weights, is multiplicative, i.e. there are products $\cdot_r:E^i_r \otimes E^j_r \rightarrow E^{i+j}_r$ such that
 	$$d_r^{i+j}(a\cdot_r b) = d_r^i(a)\cdot_r b + (-1)^{|a|}a \cdot_r d_r^j(b)$$
 	where $a$ and $b$ are classes in $E_r^i$ and $E_r^j$ respectively, and $|a|$ is the topological degree of $a$. 
 \end{thm}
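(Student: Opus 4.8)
The plan is to repackage the maps $f$ and $F$ constructed above, together with the morphism of Postnikov systems induced by the diagonal, into a pairing of cohomological Cartan--Eilenberg systems in the sense of \cite{rognes}, and then to invoke the general machinery which promotes such a pairing to a multiplicative structure on the associated spectral sequence obeying the Leibniz rule.

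First I would fix the two Postnikov systems to be compared: the one for $N=M(X_{\bullet,\bullet}\xrightarrow{\pi}Y_{\bullet,\bullet})$ given by Proposition \ref{Serre} and parametrized by the maximal collinear set $\{(qk)[pk]\}_{k\geq0}$ containing the weights of $M(A_{\bullet})$ (allowing empty index sets $I_k$), and the one for $N\otimes N$ parametrized in the same way, whose $k$-th slice is $\bigoplus_{i+j=k}T^i\otimes T^j$. This is the one place where collinearity is genuinely used: a summand of $T^i\otimes T^j$ has bidegree $(q(i+j))[p(i+j)]$, i.e. the $(i+j)$-th element of the collinear set, so these slices are again ordered direct sums of Tate motives and Proposition \ref{Serre} does apply to $N\otimes N$. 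The diagonal $\Delta:X_{\bullet,\bullet}\to X_{\bullet,\bullet}\times_{Y_{\bullet,\bullet}}X_{\bullet,\bullet}$ induces $N\to N\otimes N$ which, by Remark \ref{csta}, extends uniquely to a morphism of Postnikov systems $N^k\to(N\otimes N)^k$; by the uniqueness clause in Lemma \ref{eta1} this morphism is automatically compatible with all the maps $\eta$ and $\delta$.

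Next I would define the pairing on the Cartan--Eilenberg system $H^{**}(s,t)=H^{**}(N^{-t+1}/N^{-s+1})$ as the composite of three natural transformations: the external product of the tensor triangulated category $\DM^-_{eff}(Y_{\bullet,\bullet},R)$, which sends $a:M\to T(q)[p]$ and $b:M'\to T(q')[p']$ to $a\otimes b:M\otimes M'\to T(q+q')[p+p']$; the map $F^*$ induced by the map $F$ of Lemma \ref{b}; and the map induced by the diagonal morphism of Postnikov systems of the previous paragraph. On slices this reads
$$H^{**}(N^i/N^{i+1})\otimes H^{**}(N^j/N^{j+1})\to H^{**}(N^{i+j}/N^{i+j+1}),$$
and one checks that on $E_1$ it is induced by the cup product on $H^{**}(Y_{\bullet,\bullet})$ and the comultiplication $M(A_{\bullet})\to M(A_{\bullet})\otimes M(A_{\bullet})$ coming from the diagonal of $A_{\bullet}$. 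That this datum is a pairing of Cartan--Eilenberg systems is exactly what the preceding technical results provide: compatibility with the maps $\eta$ follows from Lemma \ref{a} and Proposition \ref{spp1} together with naturality of the external product, while the boundary relation --- the identity which, upon applying $H^{**}$ and the external product and extracting the suspension $[1]$ from a tensor factor (whence the Koszul sign $(-1)^{|a|}$), becomes the derivation property of $d_r$ --- is precisely Proposition \ref{spp2}.

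Finally I would invoke the general result of \cite{rognes} following \cite[Definition 6.1.1]{rognes}: a pairing of cohomological Cartan--Eilenberg systems induces, for every $r\geq1$, a pairing $\cdot_r:E_r^i\otimes E_r^j\to E_r^{i+j}$ of the associated spectral sequences for which each differential is a derivation, $d_r^{i+j}(a\cdot_r b)=d_r^i(a)\cdot_r b+(-1)^{|a|}a\cdot_r d_r^j(b)$, with $|a|$ the topological degree of $a$. Taking all three Cartan--Eilenberg systems in the pairing equal to the one built from the Postnikov system of $N$ yields the claim. The main obstacle, now that Lemmas \ref{a}, \ref{b} and Propositions \ref{spp1}, \ref{spp2} are available, is purely bookkeeping: verifying that the index ranges in those statements are exactly those demanded by the pairing axioms of \cite{rognes} under the reindexing $H^{**}(s,t)=H^{**}(N^{-t+1}/N^{-s+1})$, and that the external product of $\DM^-_{eff}(Y_{\bullet,\bullet},R)$ descends coherently to the subquotients $N^a/N^b$ --- the genuinely homotopy-coherent input having already been carried out in the lemmas above.
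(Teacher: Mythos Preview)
Your proposal is correct and follows essentially the same route as the paper: define the composite $\Phi:N^{i+j-r+1}/N^{i+j+1}\to (N\otimes N)^{i+j-r+1}/(N\otimes N)^{i+j+1}\xrightarrow{F} N^{i-r+1}/N^{i+1}\otimes N^{j-r+1}/N^{j+1}$ using the diagonal-induced morphism of Postnikov systems and the map $F$ of Lemma~\ref{b}, verify via Propositions~\ref{spp1} and~\ref{spp2} that $\Phi^*$ yields a pairing of Cartan--Eilenberg systems, and then invoke Rognes' general theorem. The only quibble is your citation: the relevant pairing notion is \cite[Definition~6.2.1]{rognes} and the result you need is \cite[Theorem~6.2.3]{rognes}, not Definition~6.1.1.
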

 \begin{proof}
 	For all $r \geq 1$, denote by $\Phi$ the composite
 	$$N^{i+j-r+1}/N^{i+j+1} \rightarrow (N \otimes N)^{i+j-r+1}/(N \otimes N)^{i+j+1} \xrightarrow{F} N^{i-r+1}/N^{i+1} \otimes N^{j-r+1}/N^{j+1}.$$ 
 	By Propositions $\ref{spp1}$ and \ref{spp2}, the induced morphisms in motivic cohomology $\Phi^*$ provide us with a pairing of Cartan-Eilenberg systems $\mu: ((H^{**},\eta^*,\delta^*),(H^{**},\eta^*,\delta^*)) \rightarrow (H^{**},\eta^*,\delta^*)$ \cite[Definition 6.2.1]{rognes}. Then, the result follows from \cite[Theorem 6.2.3]{rognes}.
 \end{proof}
 
 \section{The case of $BPGL_n$}
 
 In this section we want to apply the spectral sequence of Theorem \ref{Serre2} to approach the computation of the motivic cohomology of the Nisnevich classifying space of $PGL_n$. From now on we assume that the base field $k$ has characteristic not dividing $n$.
 
 First, we recall a few definitions. For a simplicial algebraic group $G_{\bullet}$ denote by $EG_{\bullet}$ the weakly contractible bisimplicial scheme defined by 
 $$(EG_{\bullet})_{i,h} = G_{h}^{i+1}$$
 with face and degeneracy maps given respectively by partial projections and partial diagonals along $i$ and by the appropriate power of face and degeneracy maps of $G_{\bullet}$ along $h$. Note that $EG_{\bullet}$ has a right free $G_{\bullet}$-action. Denote by $BG_{\bullet}$ the bisimplicial scheme obtained as a quotient of $EG_{\bullet}$ by this action. On each simplicial component $BG_{\bullet}$ looks like
 $$(BG_{\bullet})_{i,h} = G_{h}^{i}.$$
 By abuse of notation, we denote by $EG_{\bullet}$ and $BG_{\bullet}$ also the diagonals of the respective bisimplicial schemes.
 
 \begin{dfn}
 	The simplicial scheme $BG_{\bullet}$ is called the Nisnevich classifying space of $G_{\bullet}$ (\cite[Example 1.11]{morel.voevodsky}).
 \end{dfn}
 
 \begin{rem}
 	\normalfont
 	Note that the map of bisimplicial schemes $EG_{\bullet} \rightarrow BG_{\bullet}$ is smooth coherent and over the 0th simplicial component is the projection $G_{\bullet} \rightarrow Spec(k)$. This is the reason why we need to work with bisimplicial models. Indeed, these maps provide a rich source of examples where it is possible to apply the spectral sequence in Theorem \ref{Serre2}. In fact, if $G$ is a commutative algebraic group, then for any $n \geq 1$ there are simplicial commutative algebraic groups $B^nG$ each of which is the (diagonal of the) classifying space of the previous one. So, we get coherent morphisms of bisimplicial schemes $EB^nG \rightarrow B^{n+1}G$ with fiber $B^nG$. In particular, if $G=\Gm$, then $B^nG$ is a motivic Eilenberg-MacLane space $K(\Z,n+1,1)$, and we know from \cite{voevodsky.em} that their motives are cellular, i.e. direct sum of Tate motives. Hence, we get Serre spectral sequences for the motivic cohomology of Eilenberg-MacLane spaces of this type.
 \end{rem}
 
 From the short exact sequence of algebraic groups
 \begin{align}\label{cenext}
 	1 \rightarrow \Gm \rightarrow GL_n \rightarrow PGL_n \rightarrow 1
 \end{align}
 one gets the following fiber sequence
 $$B\Gm \rightarrow BGL_n \rightarrow BPGL_n.$$
 For our purposes, consider $EB\Gm \times BGL_n$ and $(EB\Gm \times BGL_n)/B\Gm$ as bisimplicial models for $BGL_n$ and $BPGL_n$ respectively (see \cite[Example 9.11]{jardine}). The smooth coherent morphisms of bisimplicial schemes
 $$EB\Gm \times BGL_n \rightarrow  (EB\Gm \times BGL_n)/B\Gm$$
 and 
 $$EB\Gm \rightarrow BB\Gm$$
 are both trivial projections over the 0th simplicial component with fiber $B\Gm$. Hence, we get a cartesian square of bisimplicial schemes
 \begin{align}\label{square}
 	\xymatrix{
 		EB\Gm\times BGL_n \ar@{->}[r] \ar@{->}[d] & (EB\Gm \times BGL_n)/B\Gm \ar@{->}[d]\\
 		EB\Gm \ar@{->}[r] & BB\Gm.
 	}
 \end{align}
 
 Recall from \cite[Proposition 3.7]{morel.voevodsky} that $B\Gm$ is $A^1$-homotopy equivalent to $P^{\infty}$ whose motive is cellular. Indeed, we have that $M(P^{\infty}) \cong \bigoplus_{j=0}^{\infty} T(j)[2j]$. Therefore, we can apply the spectral sequence constructed in Theorem \ref{Serre2} to the $BPGL_n$ case, which leads to the following result.
 
 \begin{thm}\label{bpg}
 	There exists a strongly convergent spectral sequence
 	$$E_1^{p,q,s}=  H^{p-2s,q-s}(BPGL_n) \Longrightarrow H^{p,q}(BGL_n)$$
 	with differentials $d_r^{p,q,s}:E_r^{p,q,s} \rightarrow E_r^{p+1,q,s-r}$. Moreover, the differential
 	$$d_1^{p,q,s}:H^{p-2s,q-s}(BPGL_n) \rightarrow H^{p-2s+3,q-s+1}(BPGL_n)$$
 	is the multiplication by $s\cdot d_1^{2,1,1}(1)$.
 \end{thm}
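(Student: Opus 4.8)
The plan is to obtain the spectral sequence as a direct instance of Theorem \ref{Serre2} and then to pin down $d_1$ by means of the multiplicative structure of Theorem \ref{mult}. First I would take the bisimplicial model $EB\Gm \times BGL_n \rightarrow (EB\Gm \times BGL_n)/B\Gm$ of $BGL_n \rightarrow BPGL_n$ afforded by the cartesian square \eqref{square} and check the hypotheses of Proposition \ref{Serre} and Theorem \ref{Serre2}. This morphism is smooth coherent and, over the $0$th simplicial component, is the trivial projection with fiber $B\Gm$, so condition (1) holds. Since $B\Gm$ is $A^1$-equivalent to $P^\infty$ with $M(P^\infty) \cong \bigoplus_{j \geq 0} T(j)[2j]$, condition (3) holds with each $T^j = T(j)[2j]$ a single Tate motive and the bidegrees $(q_j)[p_j] = (j)[2j]$ strictly increasing in the sense of Definition \ref{ord}. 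Condition (2), that $\omega^{M(X_{\bullet,\bullet} \to Y_{\bullet,\bullet})}_j$ be trivial, follows from the Remark preceding Proposition \ref{post}: all schemes appearing in the bisimplicial model of $BPGL_n$ are connected, so $|CC(Y_{\bullet,\bullet})|$ is contractible and $H^1(|CC(Y_{\bullet,\bullet})|, GL_1(\Z))$ vanishes. Finally the extra hypothesis of Theorem \ref{Serre2} is immediate, since $(q_l)[p_l] = (l)[2l]$ eventually dominates every bidegree. Thus Theorem \ref{Serre2} yields a strongly convergent spectral sequence with $E_1^{p,q,s} = \prod_{I_s} H^{p-p_s,q-q_s}(BPGL_n) = H^{p-2s,q-s}(BPGL_n)$ (each $I_s$ a singleton) abutting to $H^{p,q}(BGL_n)$, and the differentials $d_r^{p,q,s}: E_r^{p,q,s} \to E_r^{p+1,q,s-r}$ are dictated by the shape of the derived exact couple of Section \ref{spec}.

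For the computation of $d_1$, I would note that the set of weights $\{(j)[2j]\}_{j \geq 0}$ is collinear, since $p_i q_j = 2ij = q_i p_j$, and is itself the maximal collinear set it generates; hence Theorem \ref{mult} equips the spectral sequence with products $\cdot_r$ obeying the graded Leibniz rule. The crucial structural input is the ring structure of the $E_1$-page. Because each slice $T^s = T(s)[2s]$ is a single Tate twist and the product on slices is governed by the canonical isomorphism $T^{i+j} \cong T^i \otimes T^j$, the product $E_1^{**,i} \otimes E_1^{**,j} \to E_1^{**,i+j}$ becomes, after the identifications $E_1^{**,s} = H^{**}(BPGL_n)$, simply the cup product of $H^{**}(BPGL_n)$. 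Consequently $E_1^{**,s}$ is free of rank one over $E_1^{**,0} = H^{**}(BPGL_n)$, generated by $\sigma^s$, where $\sigma \in E_1^{2,1,1}$ is the class corresponding to $1 \in H^{0,0}(BPGL_n)$; equivalently the total $E_1$-term is the polynomial algebra $H^{**}(BPGL_n)[\sigma]$ with $\sigma$ of tridegree $(2,1,1)$, mirroring the decomposition of $M(P^\infty)$ into powers of $T(1)[2]$.

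With this in place the computation of $d_1$ is formal. Writing a general class of $E_1^{p,q,s}$ as $\sigma^s \cdot a$ with $a \in E_1^{p-2s,q-s,0}$, and using that $d_1$ vanishes on $E_1^{**,0}$ since its target $E_1^{**,-1}$ is zero, the Leibniz rule gives $d_1(\sigma^s \cdot a) = d_1(\sigma^s) \cdot a + (-1)^{|\sigma^s|}\,\sigma^s \cdot d_1(a) = d_1(\sigma^s) \cdot a$, with no sign contribution since $|\sigma^s| = 2s$ is even. A short induction, again via Leibniz and the evenness of $|\sigma| = 2$, gives $d_1(\sigma^s) = s \cdot d_1(\sigma) \cdot \sigma^{s-1}$, while $d_1(\sigma) = d_1^{2,1,1}(1) \in E_1^{3,1,0} = H^{3,1}(BPGL_n)$ by the definition of $\sigma$. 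Hence $d_1(\sigma^s \cdot a) = s \cdot d_1^{2,1,1}(1) \cdot \sigma^{s-1} \cdot a$, which, read through the identifications $E_1^{p,q,s} \cong H^{p-2s,q-s}(BPGL_n)$ and $E_1^{p+1,q,s-1} \cong H^{p-2s+3,q-s+1}(BPGL_n)$, says precisely that $d_1^{p,q,s}$ is multiplication by $s \cdot d_1^{2,1,1}(1)$.

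The step I expect to require the most care is the identification of the multiplicative structure of the $E_1$-page — concretely, verifying that $E_1^{**,s}$ is free over $H^{**}(BPGL_n)$ on $\sigma^s$ and that $\sigma^i \cdot \sigma^j = \sigma^{i+j}$ — which amounts to tracing the pairing of Cartan-Eilenberg systems underlying Theorem \ref{mult} through the single-Tate-twist slices and the diagonal map of Postnikov systems $N^k \to (N \otimes N)^k$. Together with keeping the factor $\sigma^s$ on the left so that the Leibniz sign disappears, everything else is bookkeeping.
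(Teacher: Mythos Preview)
Your proposal is correct and follows essentially the same approach as the paper: apply Theorem \ref{Serre2} to the bisimplicial model of $BGL_n \rightarrow BPGL_n$ with fiber $B\Gm$, then use the Leibniz rule of Theorem \ref{mult} together with an induction on $s$ to compute $d_1$. The paper's proof is terser, computing only $d_1^{2s,s,s}(1)$ inductively and leaving the $H^{**}(BPGL_n)$-linearity of $d_1$ implicit, whereas you spell out explicitly the polynomial structure $E_1 \cong H^{**}(BPGL_n)[\sigma]$, the verification of condition (2), and the passage from $d_1(\sigma^s)$ to the general $d_1(\sigma^s \cdot a)$; these are genuine elaborations rather than a different strategy.
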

 \begin{proof}
 	Applying Theorem \ref{Serre2} to the map $EB\Gm \times BGL_n \rightarrow  (EB\Gm \times BGL_n)/B\Gm$ gives the strongly convergent spectral sequence. The description of the first differential follows easily by induction on $s$. In fact, suppose $d_1^{2s-2,s-1,s-1}(1)=(s-1) \cdot d_1^{2,1,1}(1)$, then from Theorem \ref{mult} we deduce that
 	$$d_1^{2s,s,s}(1)=d_1^{2s-2,s-1,s-1}(1)+d_1^{2,1,1}(1)=s \cdot d_1^{2,1,1}(1)$$
 	which concludes the proof.
 \end{proof}
 
 Since the motivic cohomology of $BGL_n$ is known, i.e. $H^{**}(BGL_n) \cong H^{**}(k)[c_1,\dots,c_n]$, we can ``reverse-engineer" the previous spectral sequence in order to obtain information about the motivic cohomology of $BPGL_n$.
 
 Before proceeding, recall that the Chern class $c_i$ is in bidegree $(i)[2i]$ for any $i$, so $H^{p,q}(BGL_n) \cong 0$ for $p > 2q$.
 
 \begin{cor}\label{triv}
 	For all $p \geq 3q+1$ we have that $H^{p.q}(BPGL_n) \cong 0$.
 \end{cor}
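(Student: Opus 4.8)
The plan is to prove this by induction on the motivic weight $q$, propagating the known vanishing $H^{p,q}(BGL_n) \cong 0$ for $p > 2q$ through the spectral sequence of Theorem \ref{bpg}. Since all motivic cohomology groups of the simplicial scheme $BPGL_n$ vanish in negative weights, one reduces immediately to $q \geq 0$. I would set up the induction so that the base case $q = 0$ is absorbed into the general inductive step: at weight $0$ the only potential contributions to the relevant spot come from groups in negative weight, which vanish for free.

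For the inductive step, I would fix $q \geq 0$ and $p \geq 3q+1$, assume $H^{a,b}(BPGL_n) \cong 0$ for all $0 \leq b < q$ with $a \geq 3b+1$, and locate $H^{p,q}(BPGL_n)$ as the entry $E_1^{p,q,0}$ of the spectral sequence of Theorem \ref{bpg}. The two things to check are: (i) $E_\infty^{p,q,0} \cong 0$, which holds because, by the strong convergence in Theorem \ref{bpg}, $E_\infty^{p,q,0}$ is a subquotient of $H^{p,q}(BGL_n)$ and $p \geq 3q+1 > 2q$; and (ii) no differential enters or leaves the spot $(p,q,0)$ on any page, so that $E_1^{p,q,0} \cong E_\infty^{p,q,0}$. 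For (ii), the outgoing differential $d_r^{p,q,0}$ lands in $E_r^{p+1,q,-r}$, which is zero because the slice index $-r$ is negative; an incoming differential on page $r \geq 1$ comes from $E_r^{p-1,q,r}$, which is a subquotient of $E_1^{p-1,q,r} = H^{p-1-2r,\,q-r}(BPGL_n)$, and this vanishes — for weight reasons if $r > q$, and by the inductive hypothesis if $1 \leq r \leq q$, using $p - 1 - 2r \geq 3q - 2r \geq 3(q-r)+1$. Assembling (i) and (ii) gives $H^{p,q}(BPGL_n) = E_1^{p,q,0} \cong E_\infty^{p,q,0} \cong 0$.

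There is no serious obstacle here; the argument is essentially forced once the spectral sequence of Theorem \ref{bpg} is in hand. The only point requiring care is the degree bookkeeping for the incoming differentials, namely checking $p - 1 - 2r \geq 3(q-r)+1$ for all $r \geq 1$ — which is precisely the inequality that pins down the slope-$3$ vanishing line, reflecting that increasing the slice index $r$ by one shifts the source of an incoming differential down by one in weight and down by three in topological degree.
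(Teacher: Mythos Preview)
Your proof is correct and follows essentially the same approach as the paper: induction on $q$, using the spectral sequence of Theorem \ref{bpg}, with the same inequality $p-1-2r \geq 3q-2r \geq 3(q-r)+1$ to kill incoming differentials. The only cosmetic difference is that the paper separates out the base case $q=0$ and phrases the vanishing of $E_\infty^{p,q,0}$ by first showing all $E_1^{p,q,s}$ with $s\geq 1$ vanish (so the associated graded collapses to $E_\infty^{p,q,0}\cong H^{p,q}(BGL_n)$), whereas you invoke directly that $E_\infty^{p,q,0}$ is a subquotient of $H^{p,q}(BGL_n)$; both are equivalent.
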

 \begin{proof}
 	We proceed by induction on $q$. For $q=0$, it follows from an easy inspection of the spectral sequence that $H^{p,0}(BPGL_n) \cong H^{p,0}(BGL_n) \cong 0$ for all $p \geq 1$, which provides the induction basis.
 	
 	Now, suppose that the statement holds for all motivic weights less than $q$. The $E_1$-page of the spectral sequence is
 	$$E_1^{p,q,s}=  H^{p-2s,q-s}(BPGL_n).$$
 	Consider $p \geq 3q+1$, then $p-2s \geq 3q+1-2s \geq 3(q-s) +1$, Hence, by induction hypothesis $E_1^{p,q,s} \cong 0$ for all $s \geq 1$. It follows that the only piece of the spectral sequence that contributes for $p \geq 3q+1$ to $H^{p,q}(BGL_n) \cong 0$ comes from $E_1^{p,q,0}=  H^{p,q}(BPGL_n)$. But the differentials $d_r: E_r^{p-1,q,r} \rightarrow E_r^{p,q,0}$ are all trivial since $E_1^{p-1,q,r} \cong H^{p-1-2r,q-r}(BPGL_n) \cong 0$ as $p-1-2r \geq 3q-2r \geq 3(q-r)+1$.
 	
 	Therefore, 
 	$$H^{p,q}(BGL_n) \cong E_{\infty}^{p,q,0} \cong H^{p,q}(BPGL_n)$$
 	for $p \geq 3q+1$ that concludes the proof.
 \end{proof}
 
 Recall from \cite[Example 9.11]{jardine} that $BB\Gm$ is the Eilenberg-MacLane space $K(\Gm,2)$, so by adjunction there is a canonical element $\chi$ in $H^{3,1}(BB\Gm) \cong H_{Nis}^2(BB\Gm,\Gm) \cong [BB\Gm,BB\Gm]$ corresponding to the identity $BB\Gm \rightarrow BB\Gm$ (here by $[-,-]$ we mean hom-sets in ${\mathcal H}_s(k)$).
 
 \begin{lem}\label{bb}
 	We have that
 	$$H^{3,1}(BB\Gm) \cong \Z$$
 	generated by $\chi$.
 \end{lem}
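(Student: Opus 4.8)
The plan is to read the statement off the Serre spectral sequence of Theorem~\ref{Serre2} applied to the smooth coherent morphism $EB\Gm \to BB\Gm$. Over the $0$th simplicial component this is the trivial projection with fiber $B\Gm \simeq \mathbb{P}^\infty$, and $M(B\Gm) \cong \bigoplus_{j \geq 0} T(j)[2j]$ is collinearly weighted; the remaining hypotheses of Proposition~\ref{Serre} are verified exactly as in the proof of Theorem~\ref{bpg}, the triviality of each $\omega_j$ following from the fact that every bisimplicial level of $BB\Gm$ is a split torus, hence connected, so that the associated space of connected components is a point. Since $EB\Gm$ is weakly contractible, Theorem~\ref{Serre2} yields a strongly convergent spectral sequence
$$E_1^{p,q,s} = H^{p-2s,\,q-s}(BB\Gm) \Longrightarrow H^{p,q}(k)$$
with differentials $d_r^{p,q,s}\colon E_r^{p,q,s} \to E_r^{p+1,q,s-r}$.

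Next I would specialise to weight $q = 1$. The motivic cohomology of a simplicial scheme vanishes in negative weights, so the only nonzero columns of the $E_1$-page are $s = 0$, with $E_1^{p,1,0} = H^{p,1}(BB\Gm)$, and $s = 1$, with $E_1^{p,1,1} = H^{p-2,0}(BB\Gm)$. As every level of $BB\Gm$ is connected, $H^{*,0}(BB\Gm) \cong \Z$ is concentrated in degree $0$, so $E_1^{p,1,1} \cong \Z$ for $p = 2$ and vanishes otherwise; similarly $H^{*,1}(k) \cong k^*$ is concentrated in degree $1$. Since the differentials decrease $s$, the only possibly nonzero differential in the weight-$1$ spectral sequence is
$$d_1^{2,1,1}\colon E_1^{2,1,1} = H^{0,0}(BB\Gm) \cong \Z \longrightarrow E_1^{3,1,0} = H^{3,1}(BB\Gm).$$
Strong convergence to $H^{2,1}(k) = 0$ then forces $\ker d_1^{2,1,1} = E_\infty^{2,1,1} = 0$, and strong convergence to $H^{3,1}(k) = 0$ forces $\coker d_1^{2,1,1} = E_\infty^{3,1,0} = 0$; hence $d_1^{2,1,1}$ is an isomorphism and $H^{3,1}(BB\Gm) \cong \Z$.

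It remains to identify $\chi$ as a generator, for which I would invoke representability: $BB\Gm = K(\Gm,2)$ represents the functor $X \mapsto H^{3,1}(X) \cong H^2_{Nis}(X,\Gm)$ on $\mathcal{H}_s(k)$, with universal class $\chi$, so that the class in $H^{3,1}(X)$ attached to any $f \colon X \to BB\Gm$ equals $f^*(\chi)$. Applying this with $X = BB\Gm$ to a generator $\alpha$ of $H^{3,1}(BB\Gm) \cong \Z$, regarded as an endomorphism $a \colon BB\Gm \to BB\Gm$, gives $\alpha = a^*(\chi)$; since $a^*$ is an endomorphism of $\Z$, it is multiplication by some $c \in \Z$, so $\alpha = c\chi$, and writing $\chi = k\alpha$ with $k \in \Z$ we get $\chi = kc\chi$. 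Because $\chi \neq 0$ (otherwise the universal property would force $[X,BB\Gm] = 0$ for all $X$, contradicting $[BB\Gm,BB\Gm] \cong \Z$), we conclude $kc = 1$, so $k = \pm 1$ and $\chi$ generates $H^{3,1}(BB\Gm) \cong \Z$.

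The spectral-sequence bookkeeping in weight $1$ is routine once the vanishing of $H^{*,0}(BB\Gm)$ in positive degrees and of $H^{*,1}(k)$ away from degree $1$ is in hand; the one step that needs a little care is the final identification, where representability of $H^{3,1}$ by $BB\Gm$ is used to pass from ``$H^{3,1}(BB\Gm) \cong \Z$ with $\chi \neq 0$'' to ``$\chi$ is a generator''.
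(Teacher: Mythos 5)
Your proof is correct and follows essentially the same route as the paper: apply the spectral sequence of Theorem \ref{Serre2} to $EB\Gm \rightarrow BB\Gm$ with fiber $B\Gm$, note that in weight $1$ the only possible differential is $d_1^{2,1,1}:H^{0,0}(BB\Gm)\cong \Z \rightarrow H^{3,1}(BB\Gm)$, and use the vanishing of $H^{2,1}(EB\Gm)$ and $H^{3,1}(EB\Gm)$ to force this differential to be an isomorphism. The only differences are cosmetic: you justify $H^{p,0}(BB\Gm)\cong 0$ for $p\neq 0$ by connectedness of the simplicial levels where the paper reads it off the weight-$0$ part of the same spectral sequence, and you make explicit, via representability of $H^{3,1}$ by $K(\Gm,2)$, the identification of $\chi$ as a generator, a point the paper's proof leaves implicit.
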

 \begin{proof}
 	We can apply Theorem \ref{Serre2} to the coherent morphism $EB\Gm \rightarrow BB\Gm$ with fiber $B\Gm$. Note that for $q=0$ the differentials are all trivial and we get
 	$$H^{p,0}(EB\Gm) \cong E_{\infty}^{p,0,0} \cong H^{p,0}(BB\Gm)$$
 	from which it follows that $H^{0,0}(BB\Gm) \cong \Z$ and $H^{p,0}(BB\Gm) \cong 0$ for $p \neq 0$.
 	
 	In order to compute $H^{3,1}(BB\Gm)$, since $E_1^{3,1,1} \cong H^{1,0}(BB\Gm) \cong 0$, the part of the $E_1$-page we need consists only of the groups $E_1^{3,1,0} \cong H^{3,1}(BB\Gm)$ and $E_1^{2,1,1} \cong H^{0,0}(BB\Gm) \cong \Z$ linked by the differential $d_1^{2,1,1}:H^{0,0}(BB\Gm) \cong \Z \rightarrow H^{3,1}(BB\Gm)$. Hence, we obtain 
 	$$0\cong H^{3,1}(EB\Gm) \cong E_{\infty}^{3,1,0} \cong H^{3,1}(BB\Gm)/\Ima(d_1^{2,1,1}),$$
 	$$E_{\infty}^{2,1,0} \cong H^{2,1}(BB\Gm)$$
 	and
 	$$E_{\infty}^{2,1,1} \cong  \ker(d_1^{2,1,1}).$$
 	Therefore, from the short exact sequence
 	$$0 \rightarrow E_{\infty}^{2,1,0} \rightarrow H^{2,1}(EB\Gm) \cong 0 \rightarrow E_{\infty}^{2,1,1} \rightarrow 0$$
 	one gets that $d_1^{2,1,1}$ is an isomorphism, which completes the proof.
 \end{proof}
 
 The right vertical map in (\ref{square}) induces in ${\mathcal H}_s(k)$ a class of $[BPGL_n,BB\Gm] \cong H^{3,1}(BPGL_n)$ that classifies the central extension (\ref{cenext}) (see \cite[Theorem 1.2]{rolle}). Denote by $x$ this canonical element. Note that $x$ is nothing but the image of $\chi$ under the induced homomorphism $H^{3,1}(BB\Gm) \rightarrow H^{3,1}(BPGL_n)$.
 
 \begin{thm}\label{comp}
 	In motivic weights 0, 1 and 2 the following isomorphisms hold
 	\begin{align*}
 		H^{p,0}(BPGL_n) &\cong 
 		\begin{cases}
 			\Z & p=0\\
 			0 & otherwise
 		\end{cases}\\
 		H^{p,1}(BPGL_n) &\cong 
 		\begin{cases}
 			k^* & p=1\\
 			\Z/n \cdot x & p=3\\
 			0 & otherwise
 		\end{cases}\\
 		H^{p,2}(BPGL_n) &\cong 
 		\begin{cases}
 			H^{p,2}(k) & p\leq2\\
 			\mu_n(k) & p=3\\
 			k^*/n \cdot x \oplus \Z & p=4\\
 			\Z/2 \cdot x^2 & p=6 \: and \: n \: even\\
 			0 & otherwise.
 		\end{cases}
 	\end{align*}
 \end{thm}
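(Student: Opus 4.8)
The plan is to run the strongly convergent spectral sequence of Theorem~\ref{bpg},
$$E_1^{p,q,s}=H^{p-2s,q-s}(BPGL_n)\Longrightarrow H^{p,q}(BGL_n),$$
in reverse: the abutment $H^{**}(BGL_n)\cong H^{**}(k)[c_1,\dots,c_n]$ is known and vanishes for $p>2q$, so the $E_1$-page together with the differentials tightly constrain the unknown groups $H^{**}(BPGL_n)$. I would induct on the weight $q\in\{0,1,2\}$ and, within each weight, on the degree $p$, using Corollary~\ref{triv} to discard high-degree contributions and Theorem~\ref{bpg} (via Theorem~\ref{mult}) to control the differentials. Two inputs precede the bookkeeping. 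First, applying the functoriality of Proposition~\ref{Serre 3} to the cartesian square~(\ref{square}) and comparing with the spectral sequence of $EB\Gm\rightarrow BB\Gm$, in which $d_1^{2,1,1}$ is an isomorphism onto $H^{3,1}(BB\Gm)\cong\Z$ by Lemma~\ref{bb}, identifies $d_1^{2,1,1}(1)$ with the image of $\chi$ under the pullback $H^{3,1}(BB\Gm)\rightarrow H^{3,1}(BPGL_n)$, that is $d_1^{2,1,1}(1)=x$; hence by Theorem~\ref{bpg} the first differential is everywhere cup product with a multiple of $x$. Second, one needs $x$ to have order exactly $n$ in $H^{3,1}(BPGL_n)$: the relation $nx=0$ holds because $x$ is the pushforward along $\mu_n\hookrightarrow\Gm$ of the gerbe class of $1\rightarrow\mu_n\rightarrow SL_n\rightarrow PGL_n\rightarrow1$ (whose pushout along $\mu_n\hookrightarrow\Gm$ is~(\ref{cenext})) and $\mu_n\hookrightarrow\Gm\xrightarrow{n}\Gm$ is trivial, while the lower bound is \cite[Theorem~1.2]{rolle} (alternatively, restrict $x$ along a finite Heisenberg-type subgroup of $PGL_n$ to see its order).

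In weight $0$ only the column $s=0$ is nonzero, the differentials vanish, and $H^{p,0}(BPGL_n)\cong H^{p,0}(BGL_n)\cong H^{p,0}(k)$, which is $\Z$ for $p=0$ and $0$ otherwise. In weight $1$ only $s=0,1$ survive, and $E_1^{p,1,1}=H^{p-2,0}(BPGL_n)$ is nonzero only for $p=2$, where it is $\Z$, so the sole nontrivial differential is $d_1^{2,1,1}\colon\Z\rightarrow H^{3,1}(BPGL_n)$ sending $1\mapsto x$. Matching the weight-$1$ abutment, which is $k^*$ in degree $1$, $\Z c_1$ in degree $2$, and zero elsewhere, then yields degree by degree: $H^{1,1}(BPGL_n)\cong k^*$; $H^{2,1}(BPGL_n)=0$ (since $x$ has finite order, $\ker(d_1^{2,1,1})\cong\Z$ survives and already accounts for $\Z c_1$); $H^{3,1}(BPGL_n)=\Ima(d_1^{2,1,1})=\Z x\cong\Z/n$ (using $H^{3,1}(BGL_n)=0$); and $H^{p,1}(BPGL_n)=0$ for $p\neq1,3$.

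In weight $2$ the columns $s=0,1,2$ contribute; substituting the weight-$0$ and weight-$1$ answers gives $E_1^{3,2,1}=H^{1,1}(BPGL_n)=k^*$, $E_1^{5,2,1}=H^{3,1}(BPGL_n)=\Z/n$ and $E_1^{4,2,2}=H^{0,0}(BPGL_n)=\Z$, on which $d_1$ is cup product with $x$, $x$ and $2x$ respectively. For $p\leq2$ nothing interferes and $H^{p,2}(BPGL_n)\cong H^{p,2}(k)$. The remaining degrees are extracted against the weight-$2$ abutment ($k^*c_1$ in degree $3$, $\Z c_2\oplus\Z c_1^2$ in degree $4$, zero for $p\geq5$): the degree-$3$ filtration gives a short exact sequence
$$0\rightarrow H^{3,2}(BPGL_n)\rightarrow k^*\rightarrow\ker\bigl(\cdot x\colon H^{1,1}(BPGL_n)\rightarrow H^{4,2}(BPGL_n)\bigr)\rightarrow0,$$
and identifying the right-hand surjection with the $n$-th power map yields $H^{3,2}(BPGL_n)\cong\mu_n(k)$ and $\Ima(\cdot x)\cong k^*/(k^*)^n$; the degree-$4$ filtration, once the secondary differential $d_2^{4,2,2}$ is shown to vanish, gives $H^{4,2}(BPGL_n)\cong\bigl(k^*/(k^*)^n\bigr)\oplus\Z$ and, at the same time, $H^{5,2}(BPGL_n)=0$; Corollary~\ref{triv} gives $H^{p,2}(BPGL_n)=0$ for $p\geq7$; and the degree-$6$ filtration shows $H^{6,2}(BPGL_n)=\langle x^2\rangle$ is cyclic and a quotient of $H^{3,1}(BPGL_n)$, while graded-commutativity (the topological degree of $x$ being $3$) forces $2x^2=0$, so $H^{6,2}(BPGL_n)=0$ for $n$ odd and $H^{6,2}(BPGL_n)=\Z/2\cdot x^2$ for $n$ even provided $x^2\neq0$.

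The hardest points, in increasing difficulty, will be: (i) the order of $x$, which is invisible to the spectral sequence and must be imported from~(\ref{cenext}) together with \cite{rolle}; (ii) the weight-$2$ analysis, where turning the abstract filtration data into the stated groups forces a concrete grip on the cup-product/module structure — in particular, identifying $\ker(\cdot x)$ on $H^{1,1}(BPGL_n)$ with $n$-th powers and showing the one secondary differential $d_2^{4,2,2}$ vanishes; and (iii) the non-vanishing of $x^2$ for even $n$, which I would prove by reduction modulo $2$ and restriction to $PGL_2\cong SO_3$, or by a Steenrod-operation argument in the spirit of \cite{gu2} and \cite{gu3}.
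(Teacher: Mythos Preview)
Your overall strategy---reverse-engineer the spectral sequence of Theorem~\ref{bpg} against the known abutment $H^{**}(BGL_n)\cong H^{**}(k)[c_1,\dots,c_n]$, inducting on the weight---is exactly the paper's. The identification $d_1^{2,1,1}(1)=x$ via Lemma~\ref{bb} and Proposition~\ref{Serre 3}, and the use of Theorem~\ref{mult} to control $d_1$, also match.

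Where you diverge is in the three points you flag as hardest, and the paper handles all of them \emph{internally} to the spectral sequence by tracking explicit Chern-class restrictions, rather than by external input. For the order of $x$: the paper does not import it from \cite{rolle}. Instead, the edge map $H^{2,1}(BGL_n)\rightarrow H^{2,1}(N^1)\cong H^{2,1}(B\Gm)$ in the filtration sends $c_1$ to $nc$ (because $B\Gm\rightarrow BGL_n$ factors through the diagonal $(B\Gm)^n$), so the four-term exact sequence
\[
0\rightarrow H^{2,1}(BPGL_n)\rightarrow\Z\xrightarrow{\ \cdot n\ }\Z\rightarrow H^{3,1}(BPGL_n)\rightarrow 0
\]
already forces $H^{2,1}=0$ and $H^{3,1}\cong\Z/n$. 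The same map $c_1\mapsto nc$ in weight~$2$ identifies $H^{3,2}(BGL_n)\rightarrow H^{3,2}(N^1)$ with $k^*\xrightarrow{\cdot n}k^*$, giving $H^{3,2}(BPGL_n)\cong\mu_n(k)$ without a separate analysis of $\ker(\cdot x)$. For $d_2^{4,2,2}=0$: the paper shows $H^{4,2}(BGL_n)\rightarrow H^{4,2}(N^1)$ is surjective by exhibiting explicit preimages ($c_1^2-2c_2$ or $\tfrac{n}{2}c_1^2-(n+1)c_2$ according to parity) of a generator of $H^{4,2}(N^1)$, the latter being pinned down via the injection $H^{4,2}(N^1)\hookrightarrow H^{4,2}(N^2)\cong\Z c^2$. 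For $H^{6,2}$: the paper reads off directly that the complex
\[
\Z\xrightarrow{\ \cdot 2x\ }\Z/n\xrightarrow{\ \cdot x\ }H^{6,2}(BPGL_n)\rightarrow 0
\]
is exact, so $H^{6,2}\cong(\Z/n)/2(\Z/n)$ is $0$ for $n$ odd and $\Z/2\cdot x^2$ for $n$ even; no graded-commutativity or restriction to $PGL_2$ is needed, and non-vanishing of $x^2$ is automatic.

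In short, your plan works, but the paper's key observation---that the relevant edge maps are given by restricting Chern classes along $B\Gm\hookrightarrow BGL_n$---closes all three of your open points in one stroke and keeps the argument entirely inside the spectral sequence.
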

 \begin{proof}
 	The result follows from the spectral sequence in Theorem \ref{bpg}. 
 	
 	Let us start from the case $q=0$. Then, the only possibly non-trivial groups in the $E_1$-page are $E_1^{p,0,0} \cong H^{p,0}(BPGL_n)$ for $p \geq 0$. In this case differentials are all trivial and we get
 	$$H^{p,0}(BGL_n) \cong E_{\infty}^{p,0,0} \cong H^{p,0}(BPGL_n)$$
 	from which it follows the motivic weight $0$ case.
 	
 	For the case $q=1$, the non-trivial part of the $E_1$-page possibly consists of the groups $E_1^{p,1,0} \cong H^{p,1}(BPGL_n)$ for $p \geq 1$ and $E_1^{2,1,1} \cong H^{0,0}(BPGL_n) \cong \Z$. There is only one non-zero differential $d_1^{2,1,1}:H^{0,0}(BPGL_n) \cong \Z \rightarrow H^{3,1}(BPGL_n)$. Hence, we obtain 
 	$$H^{p,1}(BGL_n) \cong E_{\infty}^{p,1,0} \cong H^{p,1}(BPGL_n)$$
 	for $p \neq 2,3$,
 	$$0\cong H^{3,1}(BGL_n) \cong E_{\infty}^{3,1,0} \cong H^{3,1}(BPGL_n)/\Ima(d_1^{2,1,1}),$$
 	$$E_{\infty}^{2,1,0} \cong H^{2,1}(BPGL_n)$$
 	and
 	$$E_{\infty}^{2,1,1} \cong  \ker(d_1^{2,1,1}).$$
 	Therefore, from the short exact sequence
 	$$0 \rightarrow E_{\infty}^{2,1,0} \rightarrow H^{2,1}(BGL_n) \rightarrow E_{\infty}^{2,1,1} \rightarrow 0$$
 	one gets the exact sequence
 	$$0 \rightarrow H^{2,1}(BPGL_n) \rightarrow \Z \rightarrow \Z \xrightarrow{d_1^{2,1,1}} H^{3,1}(BPGL_n) \rightarrow 0.$$
 	At this point, we only need to understand the homomorphism in the middle $\Z \rightarrow \Z$. Note that the latter is just the homomorphism $H^{2,1}(BGL_n) \rightarrow H^{2,1}(N^1)$ induced by the Postnikov system generating the spectral sequence. Recall that $H^{2,1}(BGL_n)$ is generated by the first Chern class $c_1$ while $H^{2,1}(N^1) \cong H^{2,1}(B\Gm)$ is generated by the Chern class $c$. Since the map $B\Gm \rightarrow BGL_n$ factors through $(B\Gm)^n$ we have that the previous homomorphism maps $c_1$ to $nc$. It follows that $H^{2,1}(BPGL_n) \cong 0$ and $H^{3,1}(BPGL_n) \cong \Z/n$ is generated by $x=d_1^{2,1,1}(1)$, by Lemma \ref{bb} and the functoriality of the spectral sequence.
 	
 	For the case $q=2$, we have $E_1^{p,2,0} \cong H^{p,2}(BPGL_n)$, $E_1^{3,2,1} \cong H^{1,1}(BPGL_n) \cong k^*$, $E_1^{5,2,1} \cong H^{3,1}(BPGL_n) \cong \Z/n$ and $E_1^{4,2,2} \cong H^{0,0}(BPGL_n) \cong \Z$. The possibly non-trivial differentials on the $E_1$-page are $d_1^{4,2,2}$, $d_1^{3,2,1}$ and $d_1^{5,2,1}$. Note that, by Theorem \ref{bpg}, $d_1^{4,2,2}$ is the multiplication by $2x$ and $d_1^{5,2,1}$ is surjective since $H^{6,2}(BGL_n)$ is trivial.
 	
 	From the short exact sequence
 	$$0 \rightarrow E_{\infty}^{5,2,0}\rightarrow H^{5,2}(BGL_n) \rightarrow E_{\infty}^{5,2,1} \rightarrow 0$$
 	and since $H^{5,2}(BGL_n) \cong 0$ we get that both  $E_{\infty}^{5,2,0} \cong H^{5,2}(BPGL_n)/ \Ima(d_2^{4,2,2})$ and $E_{\infty}^{5,2,1} \cong \ker(d_1^{5,2,1})/ \Ima(d_1^{4,2,2})$ are trivial. In particular, $d_2^{4,2,2}$ is surjective and the complex
 	$$H^{0,0}(BPGL_n) \xrightarrow{\cdot 2x} H^{3,1}(BPGL_n) \xrightarrow{\cdot x} H^{6,2}(BPGL_n) \rightarrow 0$$
 	is exact. The first homomorphism of the latter complex is $\Z \xrightarrow{\cdot 2} \Z/n$. Hence, when $n$ is odd, it is surjective and $H^{6,2}(BPGL_n) \cong 0$, while, when $n$ is even, its image is $\Z/({\frac n 2})$ and $H^{6,2}(BPGL_n) \cong \Z/2$ generated by $x^2$. 
 	
 	From the short exact sequence
 	$$0 \rightarrow E_{\infty}^{3,2,0}\rightarrow H^{3,2}(BGL_n) \rightarrow E_{\infty}^{3,2,1} \rightarrow 0$$
 	and by recalling that $c_1$ is mapped to $nc$ via $H^{2,1}(BGL_n) \rightarrow H^{2,1}(N^1)$, we get the exact sequence
 	$$0 \rightarrow H^{3,2}(BPGL_n) \rightarrow k^* \xrightarrow{\cdot n} k^* \rightarrow k^*/n \rightarrow 0$$
 	where the homomorphism in the middle can be identified with $H^{3,2}(BGL_n) \rightarrow H^{3,2}(N^1)$. Hence, $H^{3,2}(BPGL_n) \cong \mu_n(k)$.
 	
 	Finally, from the short exact sequence
 	$$0 \rightarrow E_{\infty}^{4,2,0}\rightarrow H^{4,2}(BGL_n) \rightarrow E_{\infty}^{4,2,2} \rightarrow 0$$
 	we get the exact sequence
 	$$0 \rightarrow H^{4,2}(BPGL_n)/\Ima(d_1^{3,2,1}) \rightarrow \Z \oplus \Z \rightarrow E_{\infty}^{4,2,2} \rightarrow 0.$$
 	Note that $E_{\infty}^{4,2,2}$ is a subgroup of $H^{4,2}(N^2)\cong \Z$. The latter is generated by $c^2$ and the homomorphism $H^{4,2}(BGL_n) \rightarrow E_{\infty}^{4,2,2}$ maps $c_1^2$ to $n^2c^2$ and $c_2$ to ${\frac{n(n-1)}2} c^2$. At this point we want to prove that $d_2^{4,2,2}$ is trivial. To this end, it is enough to prove that the homomorphism $H^{4,2}(BGL_n) \rightarrow H^{4,2}(N^1)$ is surjective. First, note that since $H^{2,1}(BPGL_n) \cong 0$ then the homomorphism $H^{4,2}(N^1) \rightarrow H^{4,2}(N^2)$ induced by the Postnikov system is injective. Hence, we get an exact sequence
 	$$0 \rightarrow H^{4,2}(N^1) \cong \Z \rightarrow H^{4,2}(N^2) \cong \Z \xrightarrow{\cdot 2} H^{3,1}(BPGL_n) \cong \Z/n$$
 	from which it follows that $H^{4,2}(N^1) \cong \Z$ is generated by an element $z$ mapping to $nc^2$, if $n$ is odd, and to ${\frac n 2}c^2$, if $n$ is even, in $H^{4,2}(N^2)$. But $c_1^2-2c_2$ in $H^{4,2}(BGL_n)$ maps to $nc^2$ in $H^{4,2}(N^2)$ if $n$ is odd, while ${\frac n 2}c_1^2-(n+1)c_2$ maps to ${\frac n 2}c^2$ if $n$ is even. Therefore, $d_2^{4,2,2}$ is trivial and surjective, so $H^{5,2}(BPGL_n) \cong 0$. It immediately follows that $H^{4,2}(BPGL_n) \cong k^*/n \cdot x \oplus \Z$ where the generator of $\Z$ maps to $(n-1)c_1^2-2nc_2$ if $n$ is even, and to ${\frac {n-1} 2}c_1^2-nc_2$ if $n$ is odd. This concludes the proof. 
 \end{proof}
 
 The next result tells us that, as expected, the interesting part of $H^{**}(BPGL_n)$ is $n$-torsion.
 
 \begin{prop}
 	There are isomorphisms of $H^{**}(k,\Z[{\frac 1 n}])$-algebras
 	$$H^{**}(BPGL_n,\Z[{\tfrac 1 n}]) \cong H^{**}(BSL_n,\Z[{\tfrac 1 n}]) \cong  H^{**}(k,\Z[{\tfrac {1} {n}}])[c_2,\dots,c_n].$$
 \end{prop}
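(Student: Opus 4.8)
The plan is to deduce the first isomorphism from a $B\mu_n$-fibration over $BPGL_n$; the second isomorphism I would treat as the classical computation of the motivic cohomology of $BSL_n$, which holds over an arbitrary coefficient ring (it follows, exactly as for $BGL_n$, from the coefficient-independent cellular structure of $BSL_n$).

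Since $\operatorname{char}(k)\nmid n$, the sequence $1\to\mu_n\to SL_n\to PGL_n\to1$ is a central extension with $\mu_n$ a finite commutative group scheme. Exactly as in the treatment of $BGL_n\to BPGL_n$ above, I would take $EB\mu_n\times BSL_n\to(EB\mu_n\times BSL_n)/B\mu_n$ as bisimplicial models for $BSL_n$ and $BPGL_n$; this is a smooth coherent morphism $\pi\colon X_{\bullet,\bullet}\to Y_{\bullet,\bullet}$ which over the $0$th simplicial component is the trivial projection with fiber $B\mu_n$. The crucial input is that $M(B\mu_n,\Z[\tfrac1n])\cong T$. To prove this, I would use that $B\mu_n=E\Gm/\mu_n$ maps to $B\Gm=E\Gm/\Gm$ as a principal $\Gm/\mu_n$-bundle, which via the isomorphism $\Gm/\mu_n\cong\Gm$ (given by $\bar z\mapsto z^n$) becomes a $\Gm$-torsor; hence $B\mu_n$ is $\mathbb A^1$-equivalent to the complement of the zero section in a line bundle $\mathcal L$ on $B\Gm\simeq\mathbb P^\infty$ whose first Chern class is, up to an immaterial sign, $n$ times the generator $c$ of $H^{2,1}(B\Gm)\cong\Z$. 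The resulting Gysin triangle reads
$$M(B\mu_n)\to M(\mathbb P^\infty)\xrightarrow{\ \cup\,(\pm nc)\ }M(\mathbb P^\infty)(1)[2]\to M(B\mu_n)[1].$$
For $n=1$ one has $B\mu_1\simeq\operatorname{pt}$, so $M(B\mu_1)\cong T$ is the fiber of $\cup c$; since multiplication by $n$ is an automorphism of $M(\mathbb P^\infty)(1)[2]$ over $\Z[\tfrac1n]$, the maps $\cup c$ and $\cup(\pm nc)$ have isomorphic fibers, so $M(B\mu_n,\Z[\tfrac1n])\cong T$.

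Granting this, the coherent motive $M(X_{\bullet,\bullet}\xrightarrow{\pi}Y_{\bullet,\bullet})=L\pi_{\#}T$ in $\DM_{coh}^-(BPGL_n,\Z[\tfrac1n])$ has all of its simplicial components isomorphic to $T$. Taking $T^0=T$ and $T^j=0$ for $j\geq1$ — all the order conditions are vacuous, and $\omega^{M(X_{\bullet,\bullet}\to Y_{\bullet,\bullet})}_0$ is trivial since $|CC(BPGL_n)|$ is a point — Proposition \ref{post}, in its finite form (see the remark following it), produces a Postnikov system reducing to an isomorphism $M(X_{\bullet,\bullet}\xrightarrow{\pi}Y_{\bullet,\bullet})\xrightarrow{\sim}T$ in $\DM^-_{eff}(BPGL_n,\Z[\tfrac1n])$. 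Applying $H^{**}(-,\Z[\tfrac1n])$ together with the adjunction $\Hom_{\DM^-_{eff}(Y_{\bullet,\bullet},R)}(L\pi_{\#}T,T(q)[p])\cong\Hom_{\DM^-_{eff}(X_{\bullet,\bullet},R)}(T,\pi^*T(q)[p])$ then gives $H^{**}(BPGL_n,\Z[\tfrac1n])\cong H^{**}(BSL_n,\Z[\tfrac1n])$, the isomorphism being $\pi^*$ and hence one of $H^{**}(k,\Z[\tfrac1n])$-algebras. The only delicate step is the triviality of $M(B\mu_n,\Z[\tfrac1n])$; once that is in hand the rest is the degenerate case of the machinery already developed, much simpler than the analysis underlying Theorem \ref{bpg}. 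As an alternative to the explicit computation of $M(B\mu_n,\Z[\tfrac1n])$, one could instead run the spectral sequence of Theorem \ref{bpg} with $\Z[\tfrac1n]$-coefficients, where the first differential is multiplication by $s\cdot x$ with $x\in H^{3,1}(BPGL_n,\Z[\tfrac1n])=0$ (by Theorem \ref{comp}) and therefore vanishes; but the vanishing of the higher differentials is less transparent that way, so I would favour the $B\mu_n$-fibration argument.
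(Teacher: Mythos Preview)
Your argument is correct and takes a genuinely different route from the paper's.

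The paper does not pass to the $B\mu_n$-fibration. Instead it stays with the $B\Gm$-fibration $BGL_n\to BPGL_n$ already used in Theorem~\ref{bpg}, and compares it, via functoriality (Proposition~\ref{Serre 3}), with the trivial $B\Gm$-fibration $BSL_n\times B\Gm\to BSL_n$ along the map $BSL_n\to BPGL_n$. The key computation in the paper is that the action map $\tilde\alpha\colon SL_n\times\Gm\to GL_n$, $(s,z)\mapsto sz$, induces an isomorphism $B\tilde\alpha^{*}\colon H^{**}(BGL_n,\Z[\tfrac1n])\xrightarrow{\sim} H^{**}(BSL_n,\Z[\tfrac1n])\otimes_{H^{**}(k,\Z[\tfrac1n])}H^{**}(B\Gm,\Z[\tfrac1n])$, checked explicitly on Chern classes. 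One then argues by induction on the motivic weight: the filtration pieces of $H^{p,q}(BPGL_n,\Z[\tfrac1n])$ and $H^{p,q}(BSL_n,\Z[\tfrac1n])$ coming from the two compatible spectral sequences agree in lower weight by induction and in the top piece by the isomorphism $B\tilde\alpha^{*}$, so the five lemma finishes the step.

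Your approach is more conceptual: you replace an infinite cellular fiber by one that collapses to $T$ after inverting $n$, so the machinery degenerates to a single isomorphism and no induction or five-lemma bookkeeping is needed. The paper's approach has the virtue of reusing exactly the spectral sequence already set up for Theorem~\ref{bpg}, without introducing a second fibration or computing $M(B\mu_n,\Z[\tfrac1n])$. Both hinge on the same underlying arithmetic (invertibility of $n$), expressed either as the collapse of the Gysin triangle for $\cup\, nc$ or as the invertibility of the Chern-class substitution $c_\bullet(t)\mapsto c_\bullet(t+c)$.
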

 \begin{proof}
 	The second isomorphism is well-known (already with $\Z$-coefficients), so we only need to show the first one. 
 	
 	Since the standard morphism $GL_n \rightarrow PGL_n$ is a $\Gm$-torsor we have a cartesian square
 	$$
 	\xymatrix{
 		GL_n \times \Gm \ar@{->}[r]^{\pi} \ar@{->}[d]_{\alpha} & GL_n \ar@{->}[d]\\
 		GL_n \ar@{->}[r] & PGL_n
 	}
 	$$
 	where $\pi$ is the projection and $\alpha$ is the $\Gm$-action. The latter induces in turn a cartesian square
 	$$
 	\xymatrix{
 		SL_n \times \Gm \ar@{->}[r]^{\pi} \ar@{->}[d]_{\tilde{\alpha}} & SL_n \ar@{->}[d]\\
 		GL_n \ar@{->}[r] & PGL_n
 	}
 	$$
 	where the morphism $SL_n \rightarrow PGL_n$ (factoring through $GL_n$) is the usual $\mu_n$-torsor.
 	
 	Note that $\tilde{\alpha}$ induces a homomorphism on the motivic cohomology of the respective classifying spaces $H^{**}(BGL_n) \rightarrow H^{**}(BSL_n) \otimes_{H^{**}(k)} H^{**}(B\Gm)$ that maps the total Chern class $c_{\bullet}(t)=\sum_{i=0}^n c_it^{n-i}$ to $c_{\bullet}(t+c)$. Hence, we get an isomorphism 
 	$$B\tilde{\alpha}^{*}:H^{**}(BGL_n,\Z[{\tfrac 1 n}]) \cong H^{**}(BSL_n,\Z[{\tfrac 1 n}]) \otimes_{H^{**}(k,\Z[{\frac 1 n}])}H^{**}(B\Gm,\Z[{\tfrac 1 n}]).$$
 	
 	Now we want to prove by induction on the motivic weight that the homomorphism 
 	$$H^{**}(BPGL_n,\Z[{\tfrac 1 n}]) \rightarrow H^{**}(BSL_n,\Z[{\tfrac 1 n}])$$ 
 	is an isomorphism. We use the functoriality of the Postnikov systems provided by Proposition \ref{Serre 3}. For $q=0$ and all $p$, our spectral sequence implies that 
 	$$H^{p,0}(BPGL_n,\Z[{\frac 1 n}]) \cong H^{p,0}(BGL_n,\Z[{\frac 1 n}]) \cong H^{p,0}(BSL_n,\Z[{\frac 1 n}])$$
 	which provides the induction basis. Suppose that $H^{p,q'}(BPGL_n,\Z[{\frac 1 n}]) \cong H^{p,q'}(BSL_n,\Z[{\frac 1 n}])$ for all $q' < q$ and all $p$. Since both $H^{p,q}(BPGL_n,\Z[{\frac 1 n}])$ and $H^{p,q}(BSL_n,\Z[{\frac 1 n}])$ can be reconstructed respectively from compatible extensions of $H^{p,q'}(BPGL_n,\Z[{\frac 1 n}])$ and $H^{p,q'}(BSL_n,\Z[{\frac 1 n}])$ for $q' < q$ and $H^{p,q}(BGL_n,\Z[{\frac 1 n}])$, five lemma implies that 
 	$$H^{p,q}(BPGL_n,\Z[{\tfrac 1 n}]) \rightarrow H^{p,q}(BSL_n,\Z[{\tfrac 1 n}])$$
 	is an isomorphism that is what we aimed to show.
 \end{proof}
 
 \section{The motive of a Severi-Brauer variety}
 
 The purpose of this section is to apply previous results to obtain a description of the motive of a Severi-Brauer variety.
 
 Let $A$ be a central simple algebra of degree $n$ and $\check C(\seb(A))$ be the \v{C}ech simplicial scheme of the respective Severi-Brauer variety $\seb(A)$, i.e. $\check C(\seb(A))_n=\seb(A)^{n+1}$ with face and degeneracy maps given respectively by partial projections and diagonals. Moreover, denote by $\X_A$ the motive of $\check C(\seb(A))$ in $\DM_{eff}^{-}(k)$ and by $X_A$ the $PGL_n$-torsor associated to $A$, i.e. $X_A = {\mathrm Iso}\{A \leftrightarrow M_n(k)\}$. 
 
 \begin{rem}\label{long}
 	\normalfont
 	Since $X_A$ is a form of $PGL_n$, then the scheme $X_A/P$ is a Severi-Brauer variety for $A$, i.e.
 	$$\seb(A) \cong X_A/P$$
 	where $P$ is the parabolic subgroup of $PGL_n$ that stabilizes the point $[0,\dots,0,1]$ in $P^{n-1}$. Similarly, let $\widetilde{P}$ be the parabolic subgroup of $GL_n$ that stabilizes $[0,\dots,0,1]$. Hence, we have a cartesian square 
 	$$
 	\xymatrix{
 		\widetilde{P} \ar@{->}[r] \ar@{->}[d] & P \ar@{->}[d]\\
 		GL_n \ar@{->}[r] & PGL_n
 	}
 	$$
 	where the top horizontal map is a split $\Gm$-torsor, since $\widetilde{P}$ and $P$ can be respectively identified with $A^{n-1}\rtimes (GL_{n-1} \times \Gm)$ and $A^{n-1}\rtimes GL_{n-1}$. It follows that the inclusion $P \hookrightarrow PGL_n$ factors through $GL_n$ so that we have a sequence of maps
 	$$GL_{n-1} \rightarrow P \rightarrow GL_n \rightarrow PGL_n$$
 	where the first morphism is an $A^1$-weak equivalence and the composition of the first two maps is the inclusion of $GL_{n-1} \hookrightarrow GL_n$. In particular, the restriction $H^{2,1}(BGL_n) \rightarrow H^{2,1}(BP) \cong H^{2,1}(BGL_{n-1})$ is an isomorphism. Therefore, we can consider the following diagram of long exact sequences
 	$$
 	\xymatrix{
 		H^{2,1}(BPGL_n) \cong 0 \ar@{->}[r] \ar@{=}[d] & 	H^{2,1}(BGL_n) \ar@{->}[r] \ar@{->}[d]^{\cong} &	H^{2,1}(\overline{N}) \ar@{->}[r] \ar@{->}[d] &	H^{3,1}(BPGL_n) \ar@{->}[r] \ar@{=}[d] &	H^{3,1}(BGL_n) \cong 0 \ar@{->}[d]\\
 		H^{2,1}(BPGL_n) \cong 0\ar@{->}[r]  & 	H^{2,1}(BP) \ar@{->}[r]  &	H^{2,1}(N) \ar@{->}[r]  &	H^{3,1}(BPGL_n) \ar@{->}[r]  &	H^{3,1}(BP) \cong 0 
 	}
 	$$
 	where $\overline{N}$ is $Cone(M(BGL_n \rightarrow BPGL_n)\rightarrow T)[-1]$ and $N$ is $Cone(M(BP \rightarrow BPGL_n)\rightarrow T)[-1]$ in $\DM_{eff}^{-}(BPGL_n)$. By five lemma the homomorphism $H^{2,1}(\overline{N}) \rightarrow H^{2,1}(N)$ is an isomorphism, which means that the class $x$ in $H^{3,1}(BPGL_n)$ is the image of a generator of $H^{2,1}(N) \cong \Z$.
 \end{rem}
 
 Let us denote $\ker(H^p_{\acute{e}t}(k,\mu_n^{\otimes p-1}) \rightarrow H^p_{\acute{e}t}(k(\seb(A)),\mu_n^{\otimes p-1}))$ simply by $\ker_p$. Also, in the following results, we denote by $H^{**}_{\acute et}(-)$ the \'etale motivic cohomology. So, in particular, we have that 
 $$H^{p,q}_{\acute et}(-,\Z/n) \cong H^p_{\acute{e}t}(-,\mu_n^{\otimes q}).$$
 
 \begin{prop}
 	We have the following isomorphisms: 
 	$$H^{p,q}(\X_A) \cong H^{p,q}(k)$$ 
 	for all $p \leq q$. Moreover,
 	$$H^{p,p-1}(\X_A) \cong 0$$
 	and 
 	$$H^{p+1,p-1}(\X_A) \cong \ker_p$$
 	for all $p$.
 \end{prop}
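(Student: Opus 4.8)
The plan is to reduce everything to standard properties of \v{C}ech simplicial schemes, working with the reduced \v{C}ech motive $\widetilde{\X}_A = Cone(\X_A \to T)[-1]$. I will use three ingredients. (a) The motive $\widetilde{\X}_A$ dies after base change to any splitting field of $A$ --- in particular to the generic point $k(\seb(A))$, over which $\seb(A)$ acquires a rational point and $\check C(\seb(A))$ becomes contractible. (b) A transfer argument shows that $H^{**}(\widetilde{\X}_A,\Z)$ is killed by $\mathrm{ind}(A)$, hence by $n$: if $L/k$ is a separable splitting field of $A$ of degree $\mathrm{ind}(A)$ (which exists since $k$ is perfect), then restriction to $L$ annihilates $H^{**}(\widetilde{\X}_A,\Z)$ by (a), while corestriction composed with restriction is multiplication by $[L:k]$. (c) The Beilinson--Lichtenbaum comparison in the sharper range available for \v{C}ech motives: the natural map $H^{a,b}(\X_A,\Z/n) \to H^{a,b}_{\acute{e}t}(\X_A,\Z/n)$ is an isomorphism for $a \le b$, and $H^{b+1,b}(\X_A,\Z/n) \cong \ker\big(H^{b+1}_{\acute{e}t}(k,\mu_n^{\otimes b}) \to H^{b+1}_{\acute{e}t}(k(\seb(A)),\mu_n^{\otimes b})\big) = \ker_{b+1}$. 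Only ingredient (c) is substantial.

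\textbf{Bookkeeping.} Granting (a)--(c), the remainder is formal. With $\Z/n$-coefficients, the distinguished triangle $\widetilde{\X}_A \to \X_A \to T \to \widetilde{\X}_A[1]$ together with the vanishing $H^{a,b}(k,\Z/n)=0$ for $a>b$ turns (c) into: $H^{a,b}(\widetilde{\X}_A,\Z/n)=0$ for $a \le b$, and $H^{b+1,b}(\widetilde{\X}_A,\Z/n) \cong \ker_{b+1}$. Now pass to $\Z$-coefficients. By (b), $n$ kills $H^{**}(\widetilde{\X}_A,\Z)$, so the Bockstein long exact sequence collapses to short exact sequences $0 \to H^{a,b}(\widetilde{\X}_A,\Z) \to H^{a,b}(\widetilde{\X}_A,\Z/n) \to H^{a+1,b}(\widetilde{\X}_A,\Z) \to 0$. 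Feeding in the previous line gives $H^{a,b}(\widetilde{\X}_A,\Z)=0$ for all $a \le b+1$ and $H^{b+2,b}(\widetilde{\X}_A,\Z) \cong \ker_{b+1}$. Plugging these into the integral triangle, and using $H^{a,b}(k,\Z)=0$ for $a>b$, one obtains the three claims at once: $H^{p,q}(\X_A) \cong H^{p,q}(k)$ for $p \le q$ since both $H^{p,q}(\widetilde{\X}_A,\Z)$ and $H^{p-1,q}(\widetilde{\X}_A,\Z)$ vanish in that range; $H^{p,p-1}(\X_A) \cong 0$ since it injects into $H^{p,p-1}(\widetilde{\X}_A,\Z)=0$; and $H^{p+1,p-1}(\X_A) \cong H^{p+1,p-1}(\widetilde{\X}_A,\Z) \cong \ker_p$.

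\textbf{Main obstacle.} The hard part is ingredient (c), a now-standard property of \v{C}ech simplicial schemes at the heart of the Bloch--Kato circle of ideas, which I would establish as follows. The simplicial scheme $\check C(\seb(A))$ is \'etale-locally contractible --- any finite separable splitting field of $A$ furnishes a section of $\seb(A) \to Spec(k)$ \'etale-locally on $Spec(k)$ --- so $H^{**}_{\acute{e}t}(\X_A,\Z/n) \cong H^{**}_{\acute{e}t}(k,\Z/n)$. One then compares the skeletal (\v{C}ech) descent spectral sequences computing $H^{**}(\X_A,\Z/n)$ and $H^{**}_{\acute{e}t}(\X_A,\Z/n)$ out of the (\'etale) motivic cohomology of the smooth schemes $\seb(A)^{\times(s+1)}$, both obtained from the skeletal filtration of $M(\check C(\seb(A)))$ as in Section~\ref{spec}; on $E_1$-terms the motivic-to-\'etale comparison is an isomorphism in topological degree $\le b$ and a monomorphism in degree $b+1$ by the Beilinson--Lichtenbaum theorem for those smooth schemes, and this propagates to the abutments, yielding the isomorphism range $a \le b$ and the injectivity of $H^{b+1,b}(\X_A,\Z/n) \to H^{b+1}_{\acute{e}t}(k,\mu_n^{\otimes b})$.

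\textbf{The delicate point.} For the single surviving error group in degree $b+1$, naturality with respect to restriction along $Spec(k(\seb(A))) \to Spec(k)$ --- over which $\X_A$ is contractible, so that $H^{b+1,b}(\X_A,\Z/n)$ maps to $0 = H^{b+1,b}(k(\seb(A)),\Z/n)$ --- forces the image of $H^{b+1,b}(\X_A,\Z/n)$ in $H^{b+1}_{\acute{e}t}(k,\mu_n^{\otimes b})$ to land inside $\ker_{b+1}$. Showing that this injection actually surjects onto all of $\ker_{b+1}$ is the genuinely delicate step: it amounts to the statement that every class in $H^{b+1}_{\acute{e}t}(k,\mu_n^{\otimes b})$ vanishing at the generic point of $\seb(A)$ is already defined over $\check C(\seb(A))$ Nisnevich-locally, and this is exactly where the $\otimes$-idempotence of $\X_A$ is used (it localizes the question to the function field). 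This is the part I would either cite or derive from the truncation/slice comparison between Nisnevich and \'etale motivic cohomology; everything else in the proof is the K\"unneth, Bockstein and triangle manipulations indicated above.
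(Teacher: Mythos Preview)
Your argument is correct and follows essentially the same route as the paper: Beilinson--Lichtenbaum comparison with \'etale cohomology, \'etale contractibility of $\check C(\seb(A))$, and a Bockstein argument using that the relevant groups are $n$-torsion. The paper is just more economical: it invokes the integral comparison $H^{p,q}(\X_A)\cong H^{p,q}_{\acute et}(\X_A)\cong H^{p,q}_{\acute et}(k)\cong H^{p,q}(k)$ for $p\le q+1$ directly from \cite[Theorems~6.16--6.18]{voevodsky.motivic} (giving the first two claims at once), and cites \cite[Remark~5.3]{voevodsky.motivic} for the identification $H^{p,p-1}(\X_A,\Z/n)\cong\ker_p$ --- exactly your ``delicate point'' --- rather than rederiving it through the skeletal spectral sequence; your transfer argument (b) makes explicit the $n$-torsion fact that the paper simply uses.
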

 \begin{proof}
 	By Bloch-Kato conjecture (see \cite[Theorems 6.16, 6.17 and 6.18]{voevodsky.motivic}), one has that $H^{p,q}(\X_A) \cong H^{p,q}_{\acute{e}t}(\X_A)$ and $H^{p,q}(k) \cong H^{p,q}_{\acute{e}t}(k)$ for $p \leq q+1$. Since $H^{p,q}_{\acute{e}t}(\X_A) \cong H^{p,q}_{\acute{e}t}(k)$, we get the first two isomorphisms of the statement.
 	
 	Regarding the last one, again by Bloch-Kato conjecture we have that
 	$$H^{p,p-1}(\X_A,\Z/n) \cong \ker_p$$
 	(see \cite[Remark 5.3]{voevodsky.motivic}). The short exact sequence
 	$$0 \rightarrow \Z \xrightarrow{\cdot n} \Z \rightarrow \Z/n \rightarrow 0$$
 	induces a long exact sequence in motivic cohomology
 	$$ \dots \rightarrow H^{p,p-1}(\X_A) \rightarrow H^{p,p-1}(\X_A,\Z/n) \rightarrow H^{p+1,p-1}(\X_A) \xrightarrow{\cdot n} H^{p+1,p-1}(\X_A) \rightarrow \dots.$$  
 	Therefore, since $H^{p,q}(\X_A)$ is $n$-torsion for $p \geq q+1$ and $H^{p,p-1}(\X_A) \cong 0$, one obtains
 	$$H^{p,p-1}(\X_A,\Z/n) \cong H^{p+1,p-1}(\X_A)$$
 	that conludes the proof.
 \end{proof}
 
 Recall from \cite[2.3.11 and Proposition 2.3.14]{smirnov.vishik} that in ${\mathcal H}_s(k)$
 $$\check C(\seb(A)) \cong (X_A \times EPGL_n)/PGL_n.$$ 
 Thus, there is a natural homomorphism $\alpha_A^*:H^{**}(BPGL_n) \rightarrow H^{**}(\X_A)$ induced by the map $\alpha_A: (X_A \times EPGL_n)/PGL_n \rightarrow BPGL_n$.
 
 \begin{prop}\label{xA}
 	We have that 
 	$$\alpha_A^*(x)=[A],$$
 	where $x$ is the canonical class in $H^{3,1}(BPGL_n)$ from Theorem \ref{comp} and $[A]$ is the Brauer class of $A$ in $H^{3,1}(\X_A) \cong \ker_2$.
 \end{prop}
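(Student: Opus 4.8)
The plan is to unwind both sides into a single class in $H^2_{\acute et}(k,\mu_n)$, exploiting that the construction of $x$ makes it the universal obstruction to lifting the structure group along the central extension (\ref{cenext}). Recall that $x$ is, by definition, the image of the tautological class $\chi\in H^{3,1}(BB\Gm)$ under the homomorphism induced by $BPGL_n\to BB\Gm$, and that by \cite[Theorem 1.2]{rolle} the class $x$ classifies (\ref{cenext}): for any simplicial smooth $k$-scheme $Z$ and any $PGL_n$-torsor over $Z$ with classifying map $f\colon Z\to BPGL_n$, the class $f^*(x)\in H^{3,1}(Z)\cong H^2_{Nis}(Z,\Gm)$ is the boundary of that torsor under the connecting map $H^1(Z,PGL_n)\to H^2(Z,\Gm)$ attached to (\ref{cenext}).

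First I would identify the $PGL_n$-torsor classified by $\alpha_A$. Using the equivalence $\check C(\seb(A))\cong(X_A\times EPGL_n)/PGL_n$ from \cite[2.3.11 and Proposition 2.3.14]{smirnov.vishik}, together with the fact that $\alpha_A$ is induced by the $PGL_n$-equivariant projection $X_A\times EPGL_n\to EPGL_n$, the torsor $\alpha_A^*(EPGL_n\to BPGL_n)$ is $X_A\times EPGL_n\to(X_A\times EPGL_n)/PGL_n$, i.e. the tautological torsor obtained by spreading $X_A$ over $\check C(\seb(A))$. Hence $\alpha_A^*(x)$ is the obstruction to lifting this torsor to a $GL_n$-torsor; passing to the \'etale topology, over which $X_A$ is locally trivial, this obstruction is the Brauer class $[A]$. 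Since $A$ has degree $n$ and splits over $k(\seb(A))$, the class $[A]$ lies in $\ker_2\subseteq H^2_{\acute et}(k,\mu_n)$, and under the comparison isomorphism $H^{3,1}(\X_A)\cong H^{2,1}(\X_A,\Z/n)\cong\ker_2$ of the previous proposition — which is induced by the change-of-topology map, hence carries Nisnevich boundary classes to \'etale ones — the class $\alpha_A^*(x)$ gets identified with $[A]$.

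To make the final identification rigorous I would anchor it on Remark \ref{long}, where $x$ is exhibited as the image of a generator of $H^{2,1}(N)\cong\Z$ with $N=Cone(M(BP\to BPGL_n)\to T)[-1]$ in $\DM^-_{eff}(BPGL_n)$, together with the functoriality of the Postnikov systems from Proposition \ref{Serre 3}: pulling $BP\to BPGL_n$ back along $\alpha_A$ produces, by $\seb(A)\cong X_A/P$, the tautological bundle over $\check C(\seb(A))$, so $\alpha_A^*(x)$ is the image of the pulled-back generator under the induced boundary map into $H^{3,1}(\X_A)$. One then checks, using the low-weight computations of Theorem \ref{comp} and the comparison of the previous proposition, that this image is the generator of $\ker_2$ measuring the failure of $\seb(A)$ to acquire a rational point, which is exactly $[A]$. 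The genuinely delicate point — and the one on which I expect to spend the most effort — is precisely this last step: controlling generators versus nonzero multiples, and ruling out a spurious twist, through the equivalence $\check C(\seb(A))\simeq(X_A\times EPGL_n)/PGL_n$ and through the Nisnevich/\'etale comparison; everything upstream is formal.
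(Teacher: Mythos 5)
Your first argument --- identifying the torsor pulled back along $\alpha_A$ as the tautological one built from $X_A$, passing to \'etale cohomology where \cite[Theorem 1.2]{rolle} identifies the restriction of the extension class with $[A]$, and concluding via the change-of-topology square because $\ker_2 \hookrightarrow H^2_{\acute{e}t}(k,\mu_n)$ is injective --- is essentially the paper's own proof (the paper phrases it with $\Z/n$-coefficients, using that $H^{3,1}$ of both spaces is $n$-torsion). The Postnikov-system ``anchor'' in your last paragraph is an unnecessary detour: the \'etale computation together with that injectivity already pins down $\alpha_A^*(x)=[A]$ exactly, with no generator-versus-multiple ambiguity left to resolve.
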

 \begin{proof}
 	First note that, since $H^{3,1}(BPGL_n)$ and $H^{3,1}(\X_A)$ are both $n$-torsion, they are respectively isomorphic to $H^{2,1}(BPGL_n,\Z/n)$ and $H^{2,1}(\X_A,\Z/n)$. 
 	
 	The change of topology from Nisnevich to \'etale gives a commutative square
 	$$
 	\xymatrix{
 		H^{2,1}(BPGL_n,\Z/n) \ar@{->}[r] \ar@{->}[d] & H_{\acute{e}t}^{2,1}(BPGL_n,\Z/n) \cong H^2_{\acute{e}t}(BPGL_n,\mu_n) \ar@{->}[d]\\
 		H^{2,1}(\X_A,\Z/n) \cong \ker_2 \ar@{->}[r] & H_{\acute{e}t}^{2,1}(\X_A,\Z/n) \cong H^2_{\acute{e}t}(k,\mu_n)
 	}
 	$$
 	where the bottom horizontal morphism is the inclusion of $\ker_2$ in the Brauer group of $k$. By \cite[Theorem 1.2]{rolle}, the right vertical morphism maps the central extension
 	$$1 \rightarrow \mu_n \rightarrow SL_n \rightarrow PGL_n \rightarrow 1$$
 	(that is the image of $x$ under the top horizontal homomorphism: see Lemma \ref{ce} below for more details) to the class $[A]$ in the Brauer group. Hence, we deduce that the left vertical morphism does the same, as we aimed to show.
 \end{proof}
 
 \begin{prop}\label{pssb}
 	There exists a Postnikov system in $\DM_{eff}^-(k)$
 	$$
 	\xymatrix{
 		\X_A(n-1)[2n-2] \ar@{->}[r]  &M^{n-2} \ar@{->}[r] \ar@{->}[d] &  \dots \ar@{->}[r]   & M^2 \ar@{->}[r] \ar@{->}[d]	 &M^1 \ar@{->}[r] \ar@{->}[d]  &M(\seb(A)) \ar@{->}[d]\\
 		&	\X_A(n-2)[2n-4] \ar@{->}[ul]^{[1]} & & \X_A(2)[4] \ar@{->}[ul]^{[1]}  &	\X_A(1)[2] \ar@{->}[ul]^{[1]} & \X_A \ar@{->}[ul]^{[1]}
 	}
 	$$
 	where the composition $\X_A \rightarrow M^1[1] \rightarrow \X_A(1)[3]$ is the class $[A]$ in $H^{3,1}(\X_A)$.
 \end{prop}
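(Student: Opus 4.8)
The plan is to realise the asserted Postnikov system as the image, under the triangulated functor $Lc_{\#}L\alpha_A^{*}$, of a Postnikov system living over $BPGL_n$.

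First I would construct, in $\DM_{eff}^{-}(BPGL_n)$, a finite Postnikov system for $M(BP\rightarrow BPGL_n)$ with Tate slices $T^{j}=T(j)[2j]$ for $0\leq j\leq n-1$ and $T^{j}=0$ otherwise. Taken in the Borel-construction model $BP\simeq EPGL_n/P$, the morphism $\pi\colon BP\rightarrow BPGL_n$ is smooth coherent and over the $0$-th simplicial component is the trivial projection with fiber the homogeneous space $PGL_n/P\cong P^{n-1}$; its motive $M(P^{n-1})\cong\bigoplus_{j=0}^{n-1}T(j)[2j]$ is collinearly weighted and $(j)[2j]\prec(j+1)[2j+2]$. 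Since every scheme occurring in $BPGL_n$ is connected, $|CC(BPGL_n)|$ is a point and $H^{1}(|CC(BPGL_n)|,GL_{1}(\Z))$ is trivial, so all the classes $\omega_{j}^{M(BP\rightarrow BPGL_n)}$ vanish; hence Proposition \ref{Serre}, together with the remark following Proposition \ref{post} (only finitely many slices are non-zero), yields a Postnikov system $N^{n-1}\rightarrow\dots\rightarrow N^{1}\rightarrow N^{0}=M(BP\rightarrow BPGL_n)$ with slices $T^{j}$ and with $N^{n-1}\cong T(n-1)[2n-2]$. Its first differential $T^{0}\rightarrow N^{1}[1]\rightarrow T^{1}[1]$ lies in $\Hom_{\DM_{eff}^{-}(BPGL_n)}(T,T(1)[3])\cong H^{3,1}(BPGL_n)$; since $N^{1}=Cone(M(BP\rightarrow BPGL_n)\rightarrow T)[-1]$ is the object $N$ appearing in Remark \ref{long}, and a short computation with the weight filtration of Proposition \ref{filtr} shows that the slice map $N^{1}\rightarrow T(1)[2]$ generates $H^{2,1}(N^{1})\cong\Z$, Remark \ref{long} identifies this first differential with the canonical class $x$.

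Next I would transport this system. Using the models $\check C(\seb(A))\simeq(X_A\times EPGL_n)/PGL_n$ and $BP\simeq EPGL_n/P$, the pullback $q\colon W\rightarrow\check C(\seb(A))$ of $\pi$ along $\alpha_A$ satisfies $W\cong(X_A\times EPGL_n)/P$, which is $A^{1}$-equivalent to $X_A/P\cong\seb(A)$ by contractibility of $EPGL_n$. Hence, by smooth base change, $L\alpha_A^{*}M(BP\rightarrow BPGL_n)\cong Lq_{\#}T$ and $Lc_{\#}Lq_{\#}T\cong M(\seb(A))$, where $c\colon\check C(\seb(A))\rightarrow Spec(k)$. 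Applying the triangulated functor $L\alpha_A^{*}$ to the Postnikov system above gives a Postnikov system over $\check C(\seb(A))$ with slices $L\alpha_A^{*}T^{j}\cong T(j)[2j]$, and applying $Lc_{\#}$ — using the projection formula $Lc_{\#}c^{*}(-)\cong(-)\otimes Lc_{\#}T=(-)\otimes\X_A$ — yields a Postnikov system in $\DM_{eff}^{-}(k)$ for $M(\seb(A))$ with $M^{j}:=Lc_{\#}L\alpha_A^{*}N^{j}$ and slices $\X_A(j)[2j]$; it is finite with $M^{n-1}\cong\X_A(n-1)[2n-2]$, so it is exactly the diagram in the statement. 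Finally, its first differential equals $Lc_{\#}L\alpha_A^{*}(x)=\alpha_A^{*}(x)$, which is the Brauer class $[A]$ by Proposition \ref{xA}, the resulting map $\X_A\rightarrow\X_A(1)[3]$ being the image of $[A]\in H^{3,1}(\X_A)$ under the homomorphism induced by the coalgebra structure on $\X_A$ coming from the diagonal of $\check C(\seb(A))$.

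The hard part will be the bisimplicial bookkeeping of the first step: verifying that $BP\rightarrow BPGL_n$ in the Borel-construction model is genuinely a smooth coherent morphism of (bi)simplicial schemes whose fiber over the $0$-th simplicial component is $PGL_n/P$ (so that Proposition \ref{Serre} applies verbatim), and that the base-change identity $\check C(\seb(A))\times_{BPGL_n}BP\simeq\seb(A)$ holds already at the level of the relevant coherent motives in $\DM_{eff}^{-}(\check C(\seb(A)))$, not merely after pushing forward to $Spec(k)$. Once these compatibilities are in place, the rest is the formal transport of a Postnikov system along the triangulated functors $L\alpha_A^{*}$ and $Lc_{\#}$, and the computation of the first differential is essentially contained in Remark \ref{long} and Proposition \ref{xA}.
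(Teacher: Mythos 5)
Your proposal follows essentially the same route as the paper's proof: construct the Postnikov system for the coherent morphism $EPGL_n/P \to EPGL_n/PGL_n$ with fiber $P^{n-1}$ via Proposition \ref{Serre}, identify its first differential with $x$ through Remark \ref{long}, and transport the system along the cartesian square over $\check C(\seb(A))$ down to $\DM^-_{eff}(k)$, using Proposition \ref{xA} (together with the functoriality of Proposition \ref{Serre 3}) to recognise the class $[A]$. The only loosely justified point, which is exactly the step you flag as the hard part, is the identification $(X_A \times EPGL_n)/P \simeq \seb(A)$: this does not follow from contractibility of $EPGL_n$ alone, but from Zariski-local triviality of the $P$-torsors $X_A \to X_A/P$ and $PGL_n \to PGL_n/P \cong P^{n-1}$ (as $P \cong \mathbb{A}^{n-1} \rtimes GL_{n-1}$ is special), which is how the paper settles it.
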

 \begin{proof}
 	First, note that the projection to the second factor $(EPGL_n \times EP)/P \rightarrow EP/P$ has contractible fiber $EPGL_n$ over each simplicial component, so it is an isomorphism in ${\mathcal H}_s(k)$. On the other hand, the projection to the first factor $(EPGL_n \times EP)/P \rightarrow EPGL_n/P$ is also an isomorphism in ${\mathcal H}_s(k)$ since the $P$-torsor $PGL_n \rightarrow PGL_n/P\cong P^{n-1}$ is Zariski-locally trivial. So, we can fix $EPGL_n/P$ as a simplicial model for $BP$ and apply Proposition \ref{Serre} to the coherent morphism $EPGL_n/P \rightarrow EPGL_n/PGL_n$ with fiber $P^{n-1}$. This way we obtain a Postnikov system for the motive of $BP \rightarrow BPGL_n$ in $\DM_{eff}^-(BPGL_n)$
 	$$
 	\xymatrix{
 		T(n-1)[2n-2] \ar@{->}[r]  &N^{n-2} \ar@{->}[r] \ar@{->}[d] &  \dots \ar@{->}[r]   & N^2 \ar@{->}[r] \ar@{->}[d]	 &N^1 \ar@{->}[r] \ar@{->}[d]  &M(BP \rightarrow BPGL_n) \ar@{->}[d]\\
 		&	T(n-2)[2n-4] \ar@{->}[ul]^{[1]} & & T(2)[4] \ar@{->}[ul]^{[1]}  &	T(1)[2] \ar@{->}[ul]^{[1]} & T \ar@{->}[ul]^{[1]}
 	}
 	$$
 	where the composition $T \rightarrow N^1[1] \rightarrow T(1)[3]$ is $x$ by Remark \ref{long}.

 	Note that there is a cartesian square
 	$$
 	\xymatrix{
 		(X_A \times EPGL_n)/P \ar@{->}[r] \ar@{->}[d] & (X_A \times EPGL_n)/PGL_n \ar@{->}[d]\\
 		EPGL_n/P \ar@{->}[r] & EPGL_n/PGL_n.
 	}
 	$$
 	Since the $P$-torsor $X_A \rightarrow X_A/P$ is Zariski-locally trivial, we have a sequence of isomorphisms in ${\mathcal H}_s(k)$
 	$$(X_A \times EPGL_n)/P \leftarrow (X_A \times EP)/P \cong \check{C}(X_A \rightarrow X_A/P) \rightarrow X_A/P \cong \seb(A).$$
 	Therefore, by restricting the previous Postnikov system for $M(BP \rightarrow BPGL_n)$ along the functor $\DM_{eff}^-(BPGL_n) \rightarrow \DM_{eff}^-(\check C(\seb(A)))$ and by applying the forgetful functor to $\DM_{eff}^-(k)$, one obtains the needed Postnikov system for the motive of $\seb(A)$. Then, Proposition \ref{Serre 3} applied to the cartesian square above and Proposition \ref{xA} guarantee that the composition $\X_A \rightarrow M^1[1] \rightarrow \X_A(1)[3]$ is the class $[A]$ in $H^{3,1}(\X_A)$.
 \end{proof}
 
 \begin{thm}\label{sba}
 	There exists a strongly convergent spectral sequence
 	$$E_1^{p,q,s}= 
 	\begin{cases}
 	H^{p-2s,q-s}(\X_A) & 0 \leq s \leq n-1\\
 	0 & otherwise
 	\end{cases}
 	\Longrightarrow H^{p,q}(\seb(A))$$
 	with differentials $d_r^{p,q,s}:E_r^{p,q,s} \rightarrow E_r^{p+1,q,s-r}$. Moreover, the differential
 	$$d_1^{p,q,s}:H^{p-2s,q-s}(\X_A) \rightarrow H^{p-2s+3,q-s+1}(\X_A)$$
 	is the multiplication by $s[A]$ for $1 \leq s \leq n-1$.
 \end{thm}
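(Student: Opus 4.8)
The argument closely mirrors the proof of Theorem~\ref{bpg}, the only new inputs being the Postnikov system of Proposition~\ref{pssb} and the Brauer-class computation of Proposition~\ref{xA}. The plan is to apply the exact-couple construction of Section~\ref{spec}, with $\mathcal{C}=\DM_{eff}^-(k)$ and cohomological functor $H^{**}(-)=\Hom_{\DM_{eff}^-(k)}(-,T(q)[\bullet])$, to the Postnikov system of Proposition~\ref{pssb}. Since the $s$-th graded piece of that system is $\X_A(s)[2s]$ for $0\le s\le n-1$ and is zero otherwise, the resulting bigraded spectral sequence has
$$E_1^{p,q,s}=H^{p,q}(\X_A(s)[2s])\cong H^{p-2s,q-s}(\X_A)$$
for $0\le s\le n-1$ and $E_1^{p,q,s}=0$ for $s<0$ or $s>n-1$, where the isomorphism uses Voevodsky's cancellation theorem together with the convention that motivic cohomology vanishes in negative weights. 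The differentials take the stated shape $d_r^{p,q,s}:E_r^{p,q,s}\to E_r^{p+1,q,s-r}$ by the standard cohomological indexing of the spectral sequence of an exact couple, and the abutment is $H^{**}(M(\seb(A)))\cong H^{**}(\seb(A))$.

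For the strong convergence I would simply note that the Postnikov system of Proposition~\ref{pssb} is finite, i.e. the partial object $X_{s+1}$ vanishes once $s\ge n$; hence the induced filtration of $H^{**}(\seb(A))$ is bounded and therefore exhaustive, Hausdorff and complete. Equivalently, $\varinjlim_m H^{**}(X_m)\cong 0$ holds trivially, so Theorem~\ref{SC} applies.

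It remains to identify $d_1$. The case $s=1$ is exactly Proposition~\ref{pssb}: the composition $\X_A\to M^1[1]\to\X_A(1)[3]$ is the class $[A]$, and this composition induces the first differential relating the layers $s=1$ and $s=0$ evaluated at the unit, so $d_1^{p,q,1}:H^{p-2,q-1}(\X_A)\to H^{p+1,q}(\X_A)$ is multiplication by $[A]$. For $s\ge 2$ I would argue by induction on $s$, exactly as in the proof of Theorem~\ref{bpg}. Multiplicativity is available here because the Postnikov system of Proposition~\ref{pssb} is obtained, by restriction to $\check C(\seb(A))$ and the forgetful functor to $\DM_{eff}^-(k)$ (both monoidal), from the Postnikov system produced by Proposition~\ref{Serre} for the coherent morphism $EPGL_n/P\to EPGL_n/PGL_n$ with fiber $PGL_n/P\cong P^{n-1}$, whose motive $\bigoplus_{s=0}^{n-1}T(s)[2s]$ is collinearly weighted (all its bidegrees lie on the line $p=2q$); hence Theorem~\ref{mult} endows the spectral sequence with products satisfying the Leibniz rule. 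Writing $h\in E_1^{2,1,1}$ for the hyperplane class of $P^{n-1}$, so that the layer-$s$ part of $E_1$ is the free $H^{**}(\X_A)$-module on $h^s$, and using that elements of the bottom row $H^{**}(\X_A)=E_1^{\ast,\ast,0}$ are $d_1$-cycles together with $d_1(h)=[A]$, the Leibniz rule gives $d_1(x\,h^s)=\pm\,s\,x\,h^{s-1}d_1(h)=\pm\,s[A]\,x\,h^{s-1}$; thus $d_1^{p,q,s}$ is multiplication by $s[A]$ for $1\le s\le n-1$.

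I expect the main obstacle to be the bookkeeping in this last step: verifying that the Cartan-Eilenberg pairing of Theorem~\ref{mult} really does survive the restriction $\DM_{eff}^-(BPGL_n)\to\DM_{eff}^-(\check C(\seb(A)))$ and the forgetful functor to $\DM_{eff}^-(k)$, and matching the multiplicative generator of the fiber cohomology with $h$ so that the layer-$s$ unit is $h^s$. Once the $E_1$-page is identified with $H^{**}(\X_A)[h]/(h^n)$ and $d_1(h)=[A]$, the remaining induction is verbatim that of Theorem~\ref{bpg}. As an alternative, one can first establish the multiplicative statement over $BPGL_n$ for the spectral sequence of $BP\to BPGL_n$ (where $d_1$ at level one is the class $x$ of Theorem~\ref{comp}) and then transport it along the morphism of spectral sequences induced by $\alpha_A$, using $\alpha_A^*(x)=[A]$ from Proposition~\ref{xA} and the functoriality of Proposition~\ref{Serre 3}.
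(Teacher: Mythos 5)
Your proposal is correct and follows essentially the same route as the paper: the paper's (very terse) proof likewise obtains the spectral sequence by applying motivic cohomology to the finite Postnikov system of Proposition~\ref{pssb} and identifies $d_1$ by the same Leibniz-rule induction used in Theorem~\ref{bpg}, anchored at $s=1$ by the fact that the first connecting map $\X_A \rightarrow \X_A(1)[3]$ is $[A]$. Your extra care about transporting the multiplicative structure through the restriction and forgetful functors (or, alternatively, establishing it over $BPGL_n$ and pulling back along $\alpha_A$ via Propositions~\ref{Serre 3} and~\ref{xA}) is exactly the implicit content of the paper's phrase ``the same arguments of the proof of Theorem~\ref{bpg}''.
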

 \begin{proof}
 	The spectral sequence is obtained by applying motivic cohomology to the Postnikov system in Proposition \ref{pssb}. The first differential is computed by using the same arguments of the proof of Theorem \ref{bpg}.
 \end{proof}
 
 \begin{cor}
 	For all $p \geq 3q+1$ we have that $H^{p.q}(\X_A) \cong 0$.
 \end{cor}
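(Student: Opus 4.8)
The plan is to imitate the proof of Corollary \ref{triv} essentially verbatim, replacing the spectral sequence of Theorem \ref{bpg} by the one of Theorem \ref{sba}: the two have the same shape and the same differentials $d_r^{p,q,s}:E_r^{p,q,s}\to E_r^{p+1,q,s-r}$, and the only new input is the vanishing $H^{p,q}(\seb(A))=0$ for $p>2q$, which holds because $\seb(A)$ is smooth, so that $H^{p,q}(\seb(A))$ is a higher Chow group $CH^q(\seb(A),2q-p)$ and hence vanishes once $2q-p<0$. I would argue by induction on $q$, the statement being vacuous for $q<0$.

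For the base case $q=0$, I would observe that $E_1^{p,0,s}=H^{p-2s,-s}(\X_A)=0$ for every $s\geq 1$, since the motivic cohomology of a simplicial scheme vanishes in negative weights; for the same reason $E_1^{p-1,0,r}=0$ and $E_1^{p+1,0,-r}=0$ for all $r\geq 1$, so no differential enters or leaves $E_1^{p,0,0}$. Strong convergence then gives $H^{p,0}(\seb(A))\cong E_\infty^{p,0,0}\cong H^{p,0}(\X_A)$, and the left-hand side is $0$ for $p\geq 1$.

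For the inductive step, assume the claim in all weights $<q$ and fix $p\geq 3q+1$. The numerical heart of the argument is that for $s\geq 1$ one has $p-2s\geq 3q+1-2s=3(q-s)+s+1\geq 3(q-s)+1$; hence $E_1^{p,q,s}=H^{p-2s,q-s}(\X_A)$ vanishes for every $s\geq 1$, either because $q-s<0$, or by the base case when $q-s=0$, or by the inductive hypothesis when $0<q-s<q$. The same inequality with $p$ replaced by $p-1$ shows $E_1^{p-1,q,r}=0$ for $r\geq 1$, while $E_1^{p+1,q,-r}=0$ because its third index is negative; so no differential of any page touches $E_1^{p,q,0}$, whence $E_\infty^{p,q,0}=E_1^{p,q,0}=H^{p,q}(\X_A)$ and all the other $E_\infty^{p,q,s}$ vanish. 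By strong convergence $H^{p,q}(\seb(A))\cong H^{p,q}(\X_A)$, and since $3q+1>2q$ for $q\geq 0$ the left-hand side is $0$, as desired.

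I do not anticipate any genuine obstacle, the argument being purely formal once Theorem \ref{sba} is available; the only points needing a moment's care are the bookkeeping at the boundary cases $q-s=0$ and $q-s<0$ in the inductive step, and recording that smoothness of $\seb(A)$ supplies the vanishing range $p>2q$ which launches the induction.
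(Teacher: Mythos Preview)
Your proposal is correct and follows exactly the approach the paper intends: its proof reads in full ``The proof is the same as Corollary \ref{triv}'', and you have reproduced that argument verbatim with the spectral sequence of Theorem \ref{sba} in place of Theorem \ref{bpg}, correctly supplying the one new ingredient, namely the vanishing $H^{p,q}(\seb(A))=0$ for $p>2q$ coming from smoothness of $\seb(A)$.
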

 \begin{proof}
 	The proof is the same as Corollary \ref{triv}.
 \end{proof}
 
 As an immediate consequence of the spectral sequence for the Severi-Brauer variety we obtain a description of the Chow group $CH^2(\seb(A))$.
 
 \begin{prop}
 	There is a short exact sequence
 	$$0 \rightarrow \coker(k^* \xrightarrow{\cdot [A]}\ker_3)   \rightarrow CH^2(\seb(A)) \rightarrow \Z \rightarrow 0.$$
 \end{prop}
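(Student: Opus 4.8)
The plan is to extract $CH^2(\seb(A)) \cong H^{4,2}(\seb(A))$ from the strongly convergent spectral sequence of Theorem \ref{sba} in the single total bidegree $(p,q)=(4,2)$. First I would record which $E_1$-terms can contribute. Since $E_1^{4,2,s}=H^{4-2s,\,2-s}(\X_A)$, the columns with $s\geq 3$ vanish because of negative weights, and for $s=0,1,2$ the preceding description of $H^{**}(\X_A)$ gives $E_1^{4,2,0}=H^{4,2}(\X_A)\cong\ker_3$, $E_1^{4,2,1}=H^{2,1}(\X_A)\cong 0$, and $E_1^{4,2,2}=H^{0,0}(\X_A)\cong\Z$ (this last term being present precisely when $n\geq 3$). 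So only the slices $s=0$ and $s=2$ can survive to $E_\infty$.

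Second, I would compute the slice $s=0$. The only differential meeting this spot on the $E_1$-page is $d_1\colon E_1^{3,2,1}\to E_1^{4,2,0}$, whose source is $E_1^{3,2,1}=H^{1,1}(\X_A)\cong k^*$, and by the description of the first differential in Theorem \ref{sba} this $d_1$ is cup product with $[A]$. Every higher incoming differential originates from $E_r^{3,2,r}=H^{3-2r,\,2-r}(\X_A)=0$ for $r\geq 2$, and every outgoing differential from $E_r^{4,2,0}$ lands in a column $s<0$; hence nothing else touches this spot, and $E_\infty^{4,2,0}\cong\coker\bigl(k^*\xrightarrow{\,\cdot[A]\,}\ker_3\bigr)$.

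Third, I would show $E_\infty^{4,2,2}\cong\Z$. There are no incoming differentials, since they would have to come from $E_1^{3,2,2+r}=H^{-1-2r,\,-r}(\X_A)=0$. The first outgoing differential $d_1\colon\Z=H^{0,0}(\X_A)\to H^{3,1}(\X_A)\cong\ker_2$ is, again by Theorem \ref{sba}, cup product with $2[A]$; since $[A]$ is a Brauer class and so of finite order, the kernel of this map is a nonzero subgroup of $\Z$, whence $E_2^{4,2,2}\cong\Z$. The second outgoing differential $d_2\colon E_2^{4,2,2}\to E_2^{5,2,0}=H^{5,2}(\X_A)$ maps into a torsion group, because $H^{p,q}(\X_A)$ is $n$-torsion for $p\geq q+1$, so its kernel is once again a nonzero subgroup of $\Z$; all further differentials out of this spot vanish for degree reasons. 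Therefore $E_\infty^{4,2,2}\cong\Z$, while $E_\infty^{4,2,1}=0$.

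Finally I would conclude by strong convergence. The spectral sequence equips $H^{4,2}(\seb(A))$ with a finite increasing filtration whose associated graded pieces are $E_\infty^{4,2,0}$, $E_\infty^{4,2,1}=0$, and $E_\infty^{4,2,2}$, and $E_\infty^{4,2,0}$ is the bottom step, being exactly the image of the map $H^{4,2}(\X_A)\to H^{4,2}(\seb(A))$ induced by $\seb(A)=\check C(\seb(A))_0\to\check C(\seb(A))$. Combining the two computations gives the short exact sequence
$$0 \rightarrow \coker\bigl(k^*\xrightarrow{\,\cdot[A]\,}\ker_3\bigr) \rightarrow CH^2(\seb(A)) \rightarrow \Z \rightarrow 0.$$
The bookkeeping of $E_1$-terms and differentials is entirely routine; the step that requires a little care is the identification $E_\infty^{4,2,2}\cong\Z$, where one must check that the two differentials leaving $H^{0,0}(\X_A)$ cannot annihilate it — this is exactly where the finite order of $2[A]$ and the torsionness of $H^{5,2}(\X_A)$ enter — together with confirming that $E_\infty^{4,2,0}$, rather than $E_\infty^{4,2,2}$, is the sub-object of the resulting two-step extension.
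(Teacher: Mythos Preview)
Your proposal is correct and follows essentially the same approach as the paper: both run the spectral sequence of Theorem \ref{sba} at $(p,q)=(4,2)$, identify the three relevant $E_1$-terms via the known values of $H^{**}(\X_A)$, use the description of $d_1$ as cup product with $s[A]$ to compute $E_2$, and invoke the $n$-torsionness of $H^{5,2}(\X_A)$ to see that $d_2$ out of $E_2^{4,2,2}\cong\Z$ has kernel $\Z$. Your write-up is slightly more explicit than the paper's in justifying why nonzero subgroups of $\Z$ remain $\Z$ and in noting the implicit hypothesis $n\geq 3$, but the argument is the same.
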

 \begin{proof}
 	We use the spectral sequence of Theorem \ref{sba}. In this case the $E_1$-page is given by
 	$$E_1^{4,2,0} \cong H^{4,2}(\X_A) \cong \ker_3,$$
 	$$E_1^{4,2,1} \cong H^{2,1}(\X_A) \cong 0,$$
 	$$E_1^{4,2,2} \cong H^{0,0}(\X_A) \cong \Z.$$
 	In order to compute the $E_2$-page we also need
 	$$E_1^{3,2,1} \cong H^{1,1}(\X_A) \cong k^*,$$
 	$$E_1^{5,2,1} \cong H^{3,1}(\X_A) \cong \ker_2.$$
 	Note that $E_2^{4,2,2}$ is the kernel of the differential $d_1^{4,2,2}: E_1^{4,2,2} \rightarrow E_1^{5,2,1}$, i.e. $$E_2^{4,2,2} \cong \ker(\Z \xrightarrow{\cdot 2[A]} \ker_2) \cong \Z,$$
 	while $E_2^{4,2,0}$ is the cokernel of $d_1^{3,2,1}:E_1^{3,2,1} \rightarrow E_1^{4,2,0}$, i.e. the cokernel of the homomorphism $k^* \xrightarrow{\cdot [A]} \ker_3$. Since $E_2^{5,2,0} \cong E_1^{5,2,0} \cong H^{5,2}(\X_A)$ is $n$-torsion, we have that $E_{\infty}^{4,2,2} \cong E_3^{4,2,2} \cong \Z$. Moreover, $E_{\infty}^{4,2,0} \cong E_2^{4,2,0}$ and we get a filtration
 	$$F^{4,2,0} \hookrightarrow F^{4,2,1} \hookrightarrow F^{4,2,2} \cong H^{4,2}(\seb(A))$$
 	such that $E_{\infty}^{4,2,0} \cong F^{4,2,0}$, $E_{\infty}^{4,2,1} \cong F^{4,2,1}/F^{4,2,0}$ and $E_{\infty}^{4,2,2} \cong F^{4,2,2}/F^{4,2,1}$. Since $E_{\infty}^{4,2,1} \cong 0$, we obtain a short exact sequence
 	$$0 \rightarrow E_{\infty}^{4,2,0} \rightarrow F^{4,2,2} \rightarrow E_{\infty}^{4,2,2} \rightarrow 0$$
 	that is exactly the one we aimed to get.
 \end{proof}
 
 The previous result was already obtained by Peyre in \cite{peyre} by using different techniques. We have reported this new proof anyways as an example of a possible approach to the computation of Chow groups (and, more generally, motivic cohomology groups) of Severi-Brauer varieties by means of the spectral sequence in Theorem \ref{sba}. Of course, in order to get any information on the torsion of $CH^i(\seb(A))$ for $i \geq 3$ by using our spectral sequence one should first compute $H^{p,q}(\X_A)$ for $p \geq q+3$, which are generally unknown, at the best of our knowledge.
 
 \section{Torsion classes in $H^{**}(BPGL_n)$}
 
 In this section, following \cite{gu2} and \cite{gu3}, we find torsion classes in the motivic cohomology of $BPGL_n$ . This allows also to generalise some results about the Chow groups of $B_{\acute et}PGL_n$ from the complex numbers (see \cite[Theorem 1.1]{gu2} and \cite[Theorem 1]{gu3}) to more general fields. Indeed, we only require that the base field $k$ has characteristic not dividing $n$ and contains a primitive $n$th root of unity.
 
 First, let $n=p$ be an odd prime and consider the finite subgroup $C_p \times \mu_p$ of  $PGL_p$ described in \cite[Section 5]{vistoli}. Recall that $C_p$ is the subgroup of the symmetric group $S_p \subset PGL_p$ generated by the cycle $\sigma = (1 \: 2 \: \dots \: p)$ and $\mu_p$ is the subgroup of $PGL_p$ generated by the diagonal matrix $\rho =[\omega,\dots,\omega^{p-1},1]$, where $\omega$ is a primitive $p$th root of unity. Note that $\rho \sigma= \omega \sigma \rho$ in $GL_p$, so the two generators commute in $PGL_p$. The inclusion $\iota: C_p \times \mu_p \rightarrow PGL_p$ induces a homomorphism $B\iota^*:H^{**}(BPGL_p,\Z/p) \rightarrow H^{**}(B(C_p \times \mu_p),\Z/p)$. 
 
 Recall from Theorem \ref{bpg} that $H^{2,1}(BPGL_p)\cong 0$ and $H^{3,1}(BPGL_p) \cong \Z/p$, so the Bockstein homomorphism $\bock: H^{2,1}(BPGL_p,\Z/p) \rightarrow H^{3,1}(BPGL_p)$ is an isomorphism. Let $z$ be the class in $H^{2,1}(BPGL_p,\Z/p)$ such that $x=\bock(z)$. By \cite[Theorem 1.1]{rolle}, we know that $H^{2,1}_{\acute et}(BPGL_p,\Z/p) \cong H^2_{\acute et}(BPGL_p,\mu_p)$ is the group of central extensions of $PGL_p$ by $\mu_p$. 
 
 Before proceeding note also that the change of topology homomorphisms
 $$H^{2,1}(-,\Z/p) \rightarrow H^{2,1}_{\acute et}(-,\Z/p)$$
 and 
 $$H^{2,1}(-) \rightarrow H^{2,1}_{\acute et}(-)$$
 are respectively a monomorphism and an isomorphism for all simplicial schemes by \cite[Theorems 6.17 and 6.18]{voevodsky.motivic}.
 
 \begin{lem}\label{ce}
 	The change of topology homomorphism $H^{2,1}(BPGL_p,\Z/p) \rightarrow H^{2,1}_{\acute et}(BPGL_p,\Z/p)$ sends $z$ to the central extension
 	$$1 \rightarrow \mu_p \rightarrow SL_p \rightarrow PGL_p \rightarrow 1.$$
 \end{lem}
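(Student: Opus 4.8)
The plan is to deduce the statement from naturality of the Bockstein with respect to the change of topology, after which everything reduces to an identification of central extensions on the étale side. Write $\phi$ for the change-of-topology homomorphism; it is compatible with the Bocksteins attached to $0 \to \Z \xrightarrow{\cdot p} \Z \to \Z/p \to 0$ in weight one, since étale sheafification is exact. As $\bock(z)=x$ by the definition of $z$, this gives $\bock_{\acute et}(\phi(z)) = \phi(\bock(z)) = \phi(x)$ inside $H^{3,1}_{\acute et}(BPGL_p) \cong H^2_{\acute et}(BPGL_p,\Gm)$. Now $x$ is, by \cite[Theorem 1.2]{rolle}, the class of the central extension (\ref{cenext}), and the classification of central extensions by degree-two cohomology is natural in the topology; hence $\phi(x)$ is the class of $1 \to \Gm \to GL_p \to PGL_p \to 1$ viewed étale-cohomologically.

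Next I would identify the étale Bockstein $\bock_{\acute et}\colon H^2_{\acute et}(BPGL_p,\mu_p) \to H^2_{\acute et}(BPGL_p,\Gm)$ with the homomorphism $\iota_*$ induced by the inclusion $\iota\colon \mu_p \hookrightarrow \Gm$. Indeed, étale-locally the weight-one motivic complex is $\Gm[-1]$, so the triangle $\Z(1) \xrightarrow{\cdot p} \Z(1) \to \Z/p(1) \to \Z(1)[1]$ becomes the Kummer triangle $\Gm[-1] \xrightarrow{\cdot p} \Gm[-1] \to \mu_p \to \Gm$, whose connecting map is $\iota$ itself; under the classification of degree-two classes by central extensions, $\iota_*$ is pushout along $\iota$. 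A direct computation gives $(SL_p \times \Gm)/\mu_p \cong GL_p$ via $(g,t) \mapsto tg$, the anti-diagonal copy $\zeta \mapsto (\zeta I, \zeta^{-1})$ of $\mu_p$ being the kernel; hence pushing $1 \to \mu_p \to SL_p \to PGL_p \to 1$ out along $\iota$ yields precisely $1 \to \Gm \to GL_p \to PGL_p \to 1$. Writing $[SL_p]$ for the class of the former in $H^2_{\acute et}(BPGL_p,\mu_p)$, the previous paragraph therefore gives $\iota_*(\phi(z)) = \iota_*([SL_p])$.

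To conclude that $\phi(z) = [SL_p]$ it remains to check $\iota_*$ is injective. The Kummer sequence yields the exact piece
$$\mathrm{Pic}(BPGL_p) \xrightarrow{\cdot p} \mathrm{Pic}(BPGL_p) \to H^2_{\acute et}(BPGL_p,\mu_p) \xrightarrow{\iota_*} H^2_{\acute et}(BPGL_p,\Gm),$$
so $\ker(\iota_*) \cong \mathrm{Pic}(BPGL_p)/p$; since $\mathrm{Pic}(BPGL_p) = H^{2,1}(BPGL_p) \cong 0$ by Theorem \ref{comp}, the map $\iota_*$ is injective and $\phi(z) = [SL_p]$, as desired. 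The main point requiring care is the identification of the étale Bockstein with the pushout map $\iota_*$ and, relatedly, checking that Rolle's description of $H^2$ of $BPGL_p$ by central extensions is natural both in the topology and in the coefficient group; the remaining verifications are routine.
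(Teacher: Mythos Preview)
Your proof is correct and follows essentially the same approach as the paper: both use the commutative square relating the Bockstein and the change-of-topology map, reduce to showing that $\phi(z)$ and the $SL_p$-extension have the same image under the \'etale Bockstein (namely the $GL_p$-extension), and conclude by injectivity of that Bockstein, which in both arguments comes from $H^{2,1}_{\acute et}(BPGL_p)\cong H^{2,1}(BPGL_p)\cong 0$ (your $\mathrm{Pic}(BPGL_p)/p=0$ is the same vanishing read through the Kummer sequence). Your write-up is more explicit than the paper's---you spell out the identification of the \'etale Bockstein with the pushout $\iota_*$ and compute $(SL_p\times\Gm)/\mu_p\cong GL_p$ directly---but the underlying strategy is identical.
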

 \begin{proof}
 	We have a commutative square
 	$$
 	\xymatrix{
 		H^{2,1}(BPGL_p,\Z/p) \ar@{->}[r] \ar@{->}[d]_{\bock} & H_{\acute{e}t}^{2,1}(BPGL_p,\Z/p) \cong H^2_{\acute et}(BPGL_p,\mu_p) \ar@{->}[d]^{\bock}\\
 		H^{3,1}(BPGL_p)  \ar@{->}[r] & H^{3,1}_{\acute et}(BPGL_p)\cong H^2_{\acute et}(BPGL_p,\Gm) .
 	}
 	$$
 	Note that $H_{\acute{e}t}^{2,1}(BPGL_p) \cong H^{2,1}(BPGL_p) \cong 0$, so the Bockstein on the right is a monomorphism. Now, the statement immediately follows from the fact that $x=\bock(z)$ maps to the central extension
 	$$1 \rightarrow \Gm \rightarrow GL_p \rightarrow PGL_p \rightarrow 1$$
 	in $H^{3,1}_{\acute et}(BPGL_p) \cong H^2_{\acute et}(BPGL_p,\Gm)$.
 \end{proof}
 
 It follows from \cite[Lemma 2.3]{rolle} that $$H^{**}(B(C_p \times \mu_p),\Z/p) \cong H^{**}(k)[a,b,u,v]/(a^2=b^2=0)$$
 with $a$ and $b$ in bidegree $(0)[1]$, $u$ and $v$ in bidegree $(0)[2]$, such that $\beta(a)=u$ and $\beta(b)=v$ (where $\beta$ is the reduction mod $p$ of $\bock$).
 
 \begin{lem}
 	We have that
 	$$B\iota^*(z)=\lambda \tau ab,$$
 	where $\lambda $ is a non-zero element in $\Z/p$ and $\tau$ is the class in $H^{0,1}(k,\Z/p) \cong \mu_p(k)$ corresponding to the primitive $p$th root of unity $\omega$.
 \end{lem}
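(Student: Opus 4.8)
The plan is to translate the computation of $B\iota^*(z)$ into the identification of a central extension of $C_p \times \mu_p$ by $\mu_p$, and then to read off the class from the presentation of $H^{**}(B(C_p \times \mu_p),\Z/p)$ recalled above. First I would use Lemma \ref{ce} together with the fact that the change of topology map $H^{2,1}(-,\Z/p) \to H^{2,1}_{\acute et}(-,\Z/p)$ is injective on simplicial schemes: this shows that $z$ is detected by the central extension $1 \to \mu_p \to SL_p \to PGL_p \to 1$, so, by functoriality of the classification of central extensions by $H^2_{\acute et}(B-,\mu_p)$ (\cite[Theorem 1.1]{rolle}), $B\iota^*(z)$ is detected by the pulled-back extension $1 \to \mu_p \to \widetilde G \to C_p \times \mu_p \to 1$, where $\widetilde G \subset SL_p$ is the preimage of $C_p \times \mu_p$.

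Next I would pin down this extension using only two features. Since $p$ is odd, $\det(\sigma) = (-1)^{p-1} = 1$ and $\det(\rho) = \omega^{1+2+\dots+(p-1)} = \omega^{p(p-1)/2} = 1$, so both $\sigma$ and $\rho$ already lie in $SL_p$, with $\sigma^p = \rho^p = I$, and the cyclic subgroups they generate map isomorphically onto $C_p$ and $\mu_p$ inside $PGL_p$; hence the central extensions obtained by restricting $\widetilde G$ along $C_p \hookrightarrow C_p \times \mu_p$ and along $\mu_p \hookrightarrow C_p \times \mu_p$ are split. On the other hand, from $\rho\sigma = \omega\sigma\rho$ in $GL_p$ one gets $[\rho,\sigma] = \omega I \in \mu_p \setminus \{I\}$, so $\widetilde G$ is nonabelian, hence the extension above is nonsplit, hence its class --- and therefore $B\iota^*(z)$ --- is nonzero.

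Then I would bring in the presentation $H^{**}(B(C_p \times \mu_p),\Z/p) \cong H^{**}(k)[a,b,u,v]/(a^2=b^2=0)$ and write, against the obvious $H^{**}(k,\Z/p)$-module basis, $B\iota^*(z) = w + c_1 a + c_2 b + \rho_1 u + \rho_2 v + \rho_3 ab$ with $w \in H^{2,1}(k,\Z/p)$, $c_i \in H^{1,1}(k,\Z/p)$ and $\rho_i \in H^{0,1}(k,\Z/p) \cong \mu_p(k)$ (one checks by a weight count that these are all the summands of $H^{2,1}(B(C_p\times\mu_p),\Z/p)$). Restricting along the inclusion of the second factor $B\mu_p \hookrightarrow B(C_p\times\mu_p)$ annihilates $a$, $u$ and $ab$; using that the structure map $H^{**}(k,\Z/p) \to H^{**}(B\mu_p,\Z/p)$ is split injective, that $H^{**}(B\mu_p,\Z/p)$ is free over $H^{**}(k,\Z/p)[v]$ on $\{1,b\}$, and that this restriction of $B\iota^*(z)$ vanishes (split subextension), I would read off $w = 0$, $c_2 = 0$, $\rho_2 = 0$; restricting along $BC_p \hookrightarrow B(C_p\times\mu_p)$ gives symmetrically $c_1 = 0$, $\rho_1 = 0$. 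Hence $B\iota^*(z) = \rho_3 ab$, and since $\tau$ generates $H^{0,1}(k,\Z/p) \cong \mu_p(k)$ (it corresponds to a primitive $p$th root) we have $\rho_3 = \lambda\tau$ for a unique $\lambda \in \Z/p$, with $\lambda \neq 0$ by the nonvanishing established in the previous paragraph.

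The main obstacle I expect is the bookkeeping around the comparison maps rather than any deep input: one must check that the change of topology $H^{2,1}(-,\Z/p) \to H^{2,1}_{\acute et}(-,\Z/p)$, the identifications $H^{2,1}_{\acute et}(B-,\Z/p) \cong H^2_{\acute et}(B-,\mu_p) \cong \{\text{central extensions by }\mu_p\}$, and the restriction maps along $BC_p, B\mu_p \hookrightarrow B(C_p \times \mu_p)$ are all mutually compatible, and that the root-of-unity twist $\tau$ ends up in the right place --- equivalently, that one keeps careful track of the difference between central extensions by $\mu_p$ and by the constant group $\Z/p$. This amounts to chasing \cite[Lemma 2.3]{rolle} and \cite[Theorem 1.1]{rolle}; the purely group-theoretic inputs (the two determinant computations and the commutator identity) are routine.
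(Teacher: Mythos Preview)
Your proposal is correct and follows essentially the same approach as the paper: identify $B\iota^*(z)$ with the pulled-back central extension, use that it is nonsplit but splits over each factor, expand $B\iota^*(z)$ in the monomial basis of $H^{2,1}(B(C_p\times\mu_p),\Z/p)$, and kill all terms except $\lambda\tau ab$ by restricting to $BC_p$ and $B\mu_p$. The only differences are cosmetic: you supply the explicit group-theoretic verifications (the determinant and commutator computations) that the paper leaves implicit, and you carry an extra constant term $w\in H^{2,1}(k,\Z/p)$, which is harmless since that group vanishes for a field.
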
	
 \begin{proof}
 	Note that $B\iota^*(z)$ is the class in $H^{2,1}(B(C_p \times \mu_p),\Z/p)$ that maps via the change of topology homomorphism to the central extension
 	$$1 \rightarrow \mu_p \rightarrow G \rightarrow C_p \times \mu_p \rightarrow 1$$
 	induced by the one in Lemma \ref{ce}. The class $B\iota^*(z)$ is non-zero since the previous extension is non-split, but restricts to a split extension both of $C_p$ and of $\mu_p$.
 	
 	By degree reasons $B\iota^*(z)$ has the following general form
 	$$B\iota^*(z)=\lambda \tau ab +\lambda_u \tau u +\lambda_v \tau v + \{r_a\} a + \{r_b\} b$$
 	where $\lambda$, $\lambda_u$ and $\lambda_v$ are in $\Z/p$ and $\{r_a\}$ and $\{r_b\}$ are in $K^M_{1}(k)/p$. Since $B\iota^*(z)$ restricts to zero both in $H^{2,1}(BC_p,\Z/p)$ and in $H^{2,1}(B\mu_p,\Z/p)$, we deduce that $\lambda_u=\lambda_v=0$ and $\{r_a\}=\{r_b\}=0$. Therefore, $B\iota^*(z)= \lambda \tau ab$ that concludes the proof.
 \end{proof}
 
 \begin{prop}
 	There are non-trivial classes $z_{p,k}$ in $H^{2p^{k+1}+1,p^{k+1}}(BPGL_p,\Z/p)$ for all $k \geq 0$.
 \end{prop}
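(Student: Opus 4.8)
The plan is to obtain the classes $z_{p,k}$ by applying motivic cohomology operations to the class $z \in H^{2,1}(BPGL_p,\Z/p)$ considered above, and to detect their non-triviality after restriction along $B\iota$ to $B(C_p \times \mu_p)$, whose mod-$p$ motivic cohomology has just been made explicit. Let $Q_0 = \beta$ be the mod-$p$ Bockstein and let $Q_1, Q_2, \dots$ be the motivic Milnor primitives, defined inductively from $Q_i$ and the reduced power operation $P^{p^i}$ as in topology; recall that $Q_i$ has bidegree $(p^i-1)[2p^i-1]$, is a graded derivation, and anticommutes with $Q_0$. The definition will be $z_{p,k} = Q_{k+1}(z)$: a bidegree count immediately gives $z_{p,k} \in H^{2p^{k+1}+1,\,p^{k+1}}(BPGL_p,\Z/p)$, so the whole content of the statement is the non-vanishing of $z_{p,k}$.

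Since $B\iota^*$ is a ring homomorphism commuting with stable cohomology operations and $B\iota^*(z) = \lambda \tau ab$ with $\lambda \neq 0$ by the preceding lemma, one gets
$$B\iota^*(z_{p,k}) = \lambda\, Q_{k+1}(\tau ab) \in H^{**}(B(C_p \times \mu_p),\Z/p) \cong H^{**}(k,\Z/p)[a,b,u,v]/(a^2 = b^2 = 0).$$
The next step is to compute $Q_{k+1}(\tau ab)$ using the derivation property. First $Q_{k+1}(\tau) = 0$, since this class would lie in $H^{2p^{k+1}-1,\,p^{k+1}}(k,\Z/p)$, which vanishes because the topological degree exceeds the weight. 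On each factor $B\mu_p \cong B\Z/p$ (here one uses that $k$ contains a primitive $p$-th root of unity, so $\mu_p$ is the constant group scheme $\Z/p$ and $a,u$, resp. $b,v$, are the usual weight-zero polynomial generators), the classical formula for Milnor primitives, corrected by the power of $\tau$ forced by the weight, gives $Q_{k+1}(a) = \tau^{p^{k+1}-1} u^{p^{k+1}}$ and $Q_{k+1}(b) = \tau^{p^{k+1}-1} v^{p^{k+1}}$ (up to a unit in $\Z/p$). Hence
$$Q_{k+1}(\tau ab) = \tau\, Q_{k+1}(ab) = \tau \bigl(Q_{k+1}(a)\,b - a\,Q_{k+1}(b)\bigr) = \tau^{p^{k+1}} \bigl(u^{p^{k+1}} b - a\, v^{p^{k+1}}\bigr).$$

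To conclude, note that $u^{p^{k+1}} b$ and $a\, v^{p^{k+1}}$ are two distinct elements of the standard $H^{**}(k,\Z/p)$-basis $\{a^{\epsilon_1} b^{\epsilon_2} u^i v^j\}$ of the ring $H^{**}(k,\Z/p)[a,b,u,v]/(a^2 = b^2 = 0)$, so the class displayed above is non-zero as soon as $\tau^{p^{k+1}} \neq 0$ in $H^{0,p^{k+1}}(k,\Z/p) \cong \mu_p^{\otimes p^{k+1}}(k) \cong \Z/p$, which is guaranteed by the presence of a primitive $p$-th root of unity in $k$. Since $\lambda \neq 0$, this shows $B\iota^*(z_{p,k}) \neq 0$, and therefore $z_{p,k} \neq 0$.

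The step I expect to be the main obstacle is the precise description of the action of the motivic Milnor primitives on $H^{**}(B\mu_p,\Z/p)$ — keeping track of the exact powers of $\tau$, and confirming the vanishings $Q_{k+1}(\tau) = 0$ and $Q_{k+1}(u) = Q_{k+1}(v) = 0$ — together with the naturality of these operations with respect to $\iota$ and to the change-of-topology maps. Once these inputs are in place the argument is purely formal. It is worth recording that the bidegree bookkeeping singles out $z_{p,k} = Q_{k+1}(z)$ among all iterated operations one could apply to $z$, so there is essentially no choice involved.
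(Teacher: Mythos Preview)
Your proof is correct and follows the same overall strategy as the paper: produce $z_{p,k}$ from $z$ by Steenrod-type operations, restrict along $B\iota^*$ to $H^{**}(B(C_p\times\mu_p),\Z/p)$, and read off non-vanishing there. The one substantive difference is the choice of operation. The paper sets
\[
z_{p,k}=\Pa^{p^k}\Pa^{p^{k-1}}\cdots \Pa^p\Pa^1\beta(z)
\]
and proves by induction on $k$, using the Cartan formula and the identities $\Pa^1(u)=\tau^{p-1}u^p$, $\Pa^{p^j}(u^{p^j})=\tau^{(p-1)p^j}u^{p^{j+1}}$, that $B\iota^*(z_{p,k})=\lambda\tau^{p^{k+1}}(u^{p^{k+1}}b-av^{p^{k+1}})$. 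You instead take $z_{p,k}=Q_{k+1}(z)$ and use the derivation property of the Milnor primitive to compute $Q_{k+1}(\tau ab)$ in one stroke, landing on exactly the same restricted class. The two operations $Q_{k+1}$ and $\Pa^{p^k}\cdots\Pa^1\beta$ are not equal in general (they differ by terms involving $\beta\Pa^{p^j}$), so your $z_{p,k}$ and the paper's may be different elements of $H^{**}(BPGL_p,\Z/p)$; but both have the same nonzero image under $B\iota^*$, which is all that the proposition requires. Your verification that $Q_{k+1}(a)=\tau^{p^{k+1}-1}u^{p^{k+1}}$ unwinds to precisely the same inductive Cartan-formula computation the paper does, so the difference is largely one of packaging: your argument is marginally cleaner because the derivation property absorbs the induction, while the paper's version avoids invoking the Milnor primitives and their properties (Voevodsky, \S13) as a black box.
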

 \begin{proof}
 	For all $k \geq 0$, define classes
 	$$z_{p,k}=\Pa^{p^k}\Pa^{p^{k-1}} \cdots \Pa^p\Pa^1\beta(z)$$
 	where $\Pa^i$ are the motivic Steenrod $p$th power operations constructed in \cite{voevodsky.reduced}.
 	
 	Since $\beta$ is a graded derivation \cite[(8.1)]{voevodsky.reduced}, we have that $\beta(z)$ is mapped by $B\iota^*$ to $\lambda \tau(ub-av)$. Now, we want to prove by induction on $k$ that
 	$$B\iota^*(z_{p,k})= \lambda \tau^{p^{k+1}} (u^{p^{k+1}}b-av^{p^{k+1}})$$
 	for any $k\geq 0$. For $k=0$, this reduces to
 	$$B\iota^*(z_{p,0})=B\iota^*(\Pa^1\beta(z))=\Pa^1(\lambda \tau(ub-av))=\lambda \tau^p (u^pb-av^p)$$
 	that follows from \cite[Proposition 9.7, Lemma 9.8 and Lemma 9.9]{voevodsky.reduced}. Suppose by induction hypothesis that $B\iota^*(z_{p,k-1})= \lambda \tau^{p^k} (u^{p^{k}}b-av^{p^{k}})$, then
 	$$B\iota^*(z_{p,k})=B\iota^*(\Pa^{p^k}(z_{p,k-1}))=\Pa^{p^k}(\lambda \tau^{p^k}(u^{p^{k}}b-av^{p^{k}}))=\lambda \tau^{p^{k+1}} (u^{p^{k+1}}b-av^{p^{k+1}})$$
 	again by \cite[Proposition 9.7, Lemma 9.8 and Lemma 9.9]{voevodsky.reduced}.
 	
 	Hence, $z_{p,k}$ is non-trivial for all $k$ that is what we aimed to show.
 \end{proof}
 
 \begin{prop}
 	There are non-trivial $p$-torsion classes $y_{p,k}$ in $H^{2p^{k+1}+2,p^{k+1}}(BPGL_p)$ for all $k \geq 0$.
 \end{prop}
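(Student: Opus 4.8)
The plan is to take $y_{p,k}$ to be the image of $z_{p,k}$ under the integral Bockstein homomorphism
\[
\bock\colon H^{2p^{k+1}+1,p^{k+1}}(BPGL_p,\Z/p)\longrightarrow H^{2p^{k+1}+2,p^{k+1}}(BPGL_p)
\]
associated to $0\to\Z\xrightarrow{\cdot p}\Z\to\Z/p\to 0$. The bidegree is then the one asserted, and since $y_{p,k}$ lies in the image of $\bock$ it is killed by $p$, hence automatically $p$-torsion. By exactness of the long Bockstein sequence, $y_{p,k}=\bock(z_{p,k})$ vanishes precisely when $z_{p,k}$ is the mod $p$ reduction of an integral class, which in turn forces $\beta(z_{p,k})=0$, where $\beta=(\text{reduction mod }p)\circ\bock$ is the mod $p$ Bockstein. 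So the whole problem reduces to showing $\beta(z_{p,k})\neq 0$ in $H^{**}(BPGL_p,\Z/p)$; this is the motivic counterpart of the argument in \cite{gu2} and \cite{gu3}.

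To detect $\beta(z_{p,k})$ I would restrict along $\iota\colon C_p\times\mu_p\hookrightarrow PGL_p$. By naturality of Bocksteins, $B\iota^*(\beta(z_{p,k}))=\beta(B\iota^*(z_{p,k}))$, and from the proof of the previous proposition $B\iota^*(z_{p,k})=\lambda\tau^{p^{k+1}}(u^{p^{k+1}}b-av^{p^{k+1}})$ with $\lambda\in\Z/p$ nonzero. Now apply the Leibniz rule for the graded derivation $\beta$ \cite[(8.1)]{voevodsky.reduced}, using $\beta(a)=u$, $\beta(b)=v$, $\beta(u)=\beta(v)=0$ (as $u,v$ are themselves Bocksteins), and $\beta(\tau^{p^{k+1}})=p^{k+1}\tau^{p^{k+1}-1}\beta(\tau)=0$ since $p\mid p^{k+1}$ for $k\geq 0$ (note this holds irrespective of the value of $\beta(\tau)$). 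This gives
\[
B\iota^*(\beta(z_{p,k}))=\lambda\,\tau^{p^{k+1}}\bigl(u^{p^{k+1}}v-u\,v^{p^{k+1}}\bigr).
\]

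It remains to check the right-hand side is nonzero. In $H^{**}(B(C_p\times\mu_p),\Z/p)$, which by the presentation recalled above is free over $H^{**}(k,\Z/p)$ on the monomials in $a,b,u,v$ subject only to $a^2=b^2=0$, the monomials $u^{p^{k+1}}v$ and $uv^{p^{k+1}}$ are distinct (because $p^{k+1}>1$) and $u,v$ are polynomial generators; thus it suffices that the coefficient $\lambda\tau^{p^{k+1}}\in H^{0,p^{k+1}}(k,\Z/p)$ be nonzero. This holds because $k$ contains a primitive $p$-th root of unity, so $H^{0,p^{k+1}}(k,\Z/p)$ is free of rank one over $\Z/p$ generated by $\tau^{p^{k+1}}$ and $\lambda\neq 0$. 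Hence $B\iota^*(\beta(z_{p,k}))\neq 0$, so $\beta(z_{p,k})\neq 0$, so $y_{p,k}=\bock(z_{p,k})\neq 0$.

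The only real obstacle is the computation in the last two paragraphs: one must push the Leibniz rule carefully through $\tau^{p^{k+1}}(u^{p^{k+1}}b-av^{p^{k+1}})$ — where the divisibility $p\mid p^{k+1}$ is what makes the $\tau$-power transparent — and then be sure that no element of the coefficient ring $H^{**}(k,\Z/p)$ annihilates the surviving monomials, which is precisely the role of the hypothesis that $k$ contains a primitive $n$-th (here $p$-th) root of unity. Everything else is formal: naturality of Bocksteins under $B\iota^*$, exactness of the Bockstein sequence, and the structural inputs already recorded.
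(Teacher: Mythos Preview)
Your proof is correct and follows the same approach as the paper: define $y_{p,k}=\bock(z_{p,k})$, then show nontriviality by computing that its mod $p$ reduction $\beta(z_{p,k})$ restricts under $B\iota^*$ to $\lambda\tau^{p^{k+1}}(u^{p^{k+1}}v-uv^{p^{k+1}})$, which is nonzero. The paper simply asserts this image (relying on the Leibniz rule implicitly), while you spell out the computation and the reason $\tau^{p^{k+1}}\neq 0$; but the strategy is identical.
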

 \begin{proof}
 	Define $y_{p,k}$ as $\bock(z_{p,k})$ where $\bock:H^{**}(BPGL_p,\Z/p) \rightarrow H^{**}(BPGL_p)$ is the Bockstein homomorphism. Note that the reduction mod $p$ of $y_{p,k}$ is nothing but $\beta(z_{p,k})$ which is non-trivial since maps to $\lambda \tau^{p^{k+1}} (u^{p^{k+1}}v-uv^{p^{k+1}})$ via $B\iota^*$. This finishes the proof.
 \end{proof}
 
 Note that the classes $z$, $\beta(z)$, $z_{p,k}$ and $\beta(z_{p,k})$ are not $\tau$-torsion, since their images under $B\iota^*$ are not $\tau$-torsion.
 
 Recall from \cite{morel.voevodsky} that the \'etale classifying space $B_{\acute et}G$ is defined as the object ${\mathrm R}\pi_*\pi^*(BG)$ in ${\mathcal H}_s(k)$, where $(\pi^*,{\mathrm R}\pi_*)$ is the couple of adjoint functors induced by the morphism of sites $\pi:(Sm/k)_{\acute et} \rightarrow (Sm/k)_{Nis}$. 
 
 \begin{prop}
 	There are non-trivial $p$-torsion classes ${\upsilon}_{p,k}$ in $CH^{p^{k+1}+1}(B_{\acute et}PGL_p)$ for all $k \geq 0$.
 \end{prop}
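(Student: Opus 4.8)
The plan is to transport the classes $y_{p,k}$ from $BPGL_p$ to $B_{\acute et}PGL_p$ along the canonical map $\phi\colon BPGL_p \to B_{\acute et}PGL_p$, exploiting the naturality of the motivic Steenrod operations $\Pa^i$ and of the Bocksteins. First I would lift the mod-$p$ class $z\in H^{2,1}(BPGL_p,\Z/p)$ to a class $\tilde z\in H^{2,1}(B_{\acute et}PGL_p,\Z/p)$ with $\phi^*(\tilde z)=z$; then I would set
$$\tilde z_{p,k}=\Pa^{p^k}\Pa^{p^{k-1}}\cdots\Pa^p\Pa^1\beta(\tilde z)\in H^{2p^{k+1}+1,p^{k+1}}(B_{\acute et}PGL_p,\Z/p)$$
and finally $\upsilon_{p,k}=\bock(\tilde z_{p,k})\in H^{2p^{k+1}+2,p^{k+1}}(B_{\acute et}PGL_p)=CH^{p^{k+1}+1}(B_{\acute et}PGL_p)$, where $\bock$ denotes the integral Bockstein.

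The lifting relies on the commutative square of change-of-topology homomorphisms
$$
\xymatrix{
H^{2,1}(B_{\acute et}PGL_p,\Z/p) \ar@{->}[r]^{\phi^*} \ar@{->}[d] & H^{2,1}(BPGL_p,\Z/p) \ar@{->}[d]\\
H^{2,1}_{\acute et}(B_{\acute et}PGL_p,\Z/p) \ar@{->}[r]_{\phi^*_{\acute et}} & H^{2,1}_{\acute et}(BPGL_p,\Z/p)
}
$$
together with three inputs: the right vertical map is a monomorphism and the left vertical map is an isomorphism in the borderline bidegree $(2,1)$ (Beilinson--Lichtenbaum; cf. the discussion before Lemma \ref{ce} and \cite[Theorems 6.17 and 6.18]{voevodsky.motivic}); and $\phi$ is an $\acute et$-local equivalence because $B_{\acute et}PGL_p=\mathrm{R}\pi_*\pi^*(BPGL_p)$, so $\phi^*_{\acute et}$ is an isomorphism. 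By Lemma \ref{ce} the image of $z$ in $H^{2,1}_{\acute et}(BPGL_p,\Z/p)\cong H^2_{\acute et}(BPGL_p,\mu_p)$ is the class of the central extension $1\to\mu_p\to SL_p\to PGL_p\to1$, which is a genuinely global extension of algebraic groups and hence comes from a class in $H^2_{\acute et}(B_{\acute et}PGL_p,\mu_p)$; pulling that class back through the (iso) left vertical arrow and chasing the square produces $\tilde z$, and a rescaling (legitimate since $H^{2,1}(BPGL_p,\Z/p)\cong\Z/p$ by Theorem \ref{bpg}) arranges $\phi^*(\tilde z)=z$.

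The remaining steps are routine. The operations $\Pa^i$, the reduced Bockstein $\beta$ and the integral Bockstein $\bock$ are stable cohomology operations, hence natural with respect to $\phi^*$, so $\phi^*(\tilde z_{p,k})=z_{p,k}$ and $\phi^*(\upsilon_{p,k})=\bock(\phi^*(\tilde z_{p,k}))=\bock(z_{p,k})=y_{p,k}$. The previous proposition gives $y_{p,k}\neq0$, whence $\upsilon_{p,k}\neq0$; and $\upsilon_{p,k}$ is $p$-torsion because it lies in the image of $\bock\colon H^{**}(-,\Z/p)\to H^{**}(-)$, which is annihilated by $p$. The degree count ($\beta$ adds $(1,0)$, each $\Pa^{p^j}$ adds $(2p^j(p-1),p^j(p-1))$, and $\bock$ adds $(1,0)$) confirms $\upsilon_{p,k}\in H^{2m,m}(B_{\acute et}PGL_p)$ with $m=p^{k+1}+1$, i.e. in $CH^{p^{k+1}+1}(B_{\acute et}PGL_p)$.

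I expect the lifting step to be the genuine obstacle: one must know that the class $z$, a priori living only on the Nisnevich classifying space, already exists on $B_{\acute et}PGL_p$, which rests both on the $\acute et$-local-equivalence property of $\phi$ and on Beilinson--Lichtenbaum in the borderline bidegree $(2,1)$. Some care is needed because $B_{\acute et}PGL_p$ need not be the diagonal of a simplicial scheme, so these comparison statements must be applied in $\mathcal{H}_s(k)$ (for instance after writing $B_{\acute et}PGL_p$ as a suitable homotopy colimit) rather than scheme by scheme. Everything after the lift --- naturality of the operations, the torsion claim, and the degree bookkeeping --- is then immediate.
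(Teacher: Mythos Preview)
Your construction has a genuine gap in the degree bookkeeping, and it is exactly the point where the paper diverges from your approach. Starting from $\tilde z\in H^{2,1}$, applying $\beta$ lands in $H^{3,1}$; applying $\Pa^1,\Pa^p,\ldots,\Pa^{p^k}$ adds $\bigl(2(p^{k+1}-1),\,p^{k+1}-1\bigr)$ in total, giving $H^{2p^{k+1}+1,\,p^{k+1}}$; and the integral Bockstein then gives $H^{2p^{k+1}+2,\,p^{k+1}}$. This is \emph{not} on the Chow diagonal: $CH^{p^{k+1}+1}=H^{2(p^{k+1}+1),\,p^{k+1}+1}$ requires weight $p^{k+1}+1$, whereas your class has weight $p^{k+1}$. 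So your $\upsilon_{p,k}$ does not lie in any Chow group.

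The paper fixes this by lifting not $z$ but $\tau z\in H^{2,2}(BPGL_p,\Z/p)$, obtaining $\zeta\in H^{2,2}(B_{\acute et}PGL_p,\Z/p)$ and then setting $\upsilon_{p,k}=\bock\,\Pa^{p^k}\cdots\Pa^1\beta(\zeta)$. This shift of one in weight puts the final class exactly on the Chow diagonal. It simultaneously resolves the lifting issue you flagged: in bidegree $(2,1)$ Beilinson--Lichtenbaum gives only a monomorphism from Nisnevich to \'etale motivic cohomology (so your step~4, pulling back through the left vertical arrow, is not justified as stated), whereas in bidegree $(2,2)$ it gives an isomorphism, and the lift of $\tau z$ is immediate. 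Non-triviality is then checked as you do, since the reduction mod $p$ of $\upsilon_{p,k}$ maps to $\tau\beta(z_{p,k})$, which was shown to be nonzero.
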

 \begin{proof}
 	By \cite[Theorem 6.17]{voevodsky.motivic} we have an isomorphism 
 	$$H^{2,2}(B_{\acute et}PGL_p,\Z/p) \rightarrow H^{2,2}(BPGL_p,\Z/p).$$
 	Let $\zeta$ be the class in $H^{2,2}(B_{\acute et}PGL_p,\Z/p)$ lifting $\tau z$ and define
 	$${\upsilon}_{p,k}=\bock \Pa^{p^k}\Pa^{p^{k-1}} \cdots \Pa^p\Pa^1\beta ({\zeta}).$$
 	The classes ${\upsilon}_{p,k}$ are non-trivial since their reductions mod $p$ map to $\tau \beta(z_{p,k})$.
 \end{proof}
 
 Let $p$ be an odd prime dividing $n$. Then, the diagonal map $\Delta: PGL_p \rightarrow PGL_n$ induces a homomorphism $H^{**}(BPGL_n) \rightarrow H^{**}(BPGL_p)$ that maps $x$ to $x$. Since the classes $z_{p,k}$, $y_{p,k}$ and $\upsilon_{p,k}$ for $BPGL_p$ are constructed starting from $\beta(z)$ (that is the reduction mod $p$ of $x$), we can define in the same way classes for $BPGL_n$. This immediately implies the following result.
 
 \begin{cor}\label{gentor}
 	For any odd prime $p$ dividing $n$ and $k \geq 0$, there are non-trivial $p$-torsion classes:
 	
 	1) $z_{p,k}$ in $H^{2p^{k+1}+1,p^{k+1}}(BPGL_n,\Z/p)$;
 	
 	2) $y_{p,k}$ in $H^{2p^{k+1}+2,p^{k+1}}(BPGL_n)$;
 	
 	3) ${\upsilon}_{p,k}$ in $CH^{p^{k+1}+1}(B_{\acute et}PGL_n)$.
 \end{cor}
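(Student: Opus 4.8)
The plan is to make precise the remark preceding the statement. For $p\mid n$ write $n=pm$ and take the diagonal (Kronecker) homomorphism $\Delta\colon PGL_p\to PGL_n$ induced on matrix algebras by $a\mapsto a\otimes I_m$ under $M_p(k)\otimes M_m(k)\cong M_n(k)$; this descends to the $PGL$'s because scalar matrices are sent to scalar matrices. Composing with the inclusion $\iota\colon C_p\times\mu_p\hookrightarrow PGL_p$ of Vistoli's subgroup yields $\iota_n:=\Delta\circ\iota\colon C_p\times\mu_p\hookrightarrow PGL_n$. The key geometric input is that $B\Delta^*\colon H^{**}(BPGL_n)\to H^{**}(BPGL_p)$ sends $x$ to $x$: the matrix embedding lifts to $GL_p\to GL_n$ restricting to the identity on the central $\Gm$'s, so the central extension $1\to\Gm\to GL_n\to PGL_n\to 1$ pulls back along $\Delta$ to $1\to\Gm\to GL_p\to PGL_p\to 1$, and by \cite[Theorem 1.2]{rolle} (equivalently, by the definition of $x$) the two canonical classes in $H^{3,1}$ coincide. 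Hence $B\iota_n^*(x)=B\iota^*(x)$, already computed to be non-zero in this section, and likewise with $\Z/p$-coefficients; passing to étale classifying spaces via the functoriality of $B_{\acute et}(-)$ and the change-of-topology isomorphism in bidegree $(2)[2]$, the same holds over $B_{\acute et}$.

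Next I would copy the three constructions over $BPGL_n$ and $B_{\acute et}PGL_n$ verbatim, starting from the mod-$p$ reduction $\bar x\in H^{3,1}(BPGL_n,\Z/p)$ of $x$ — for $n=p$ this is the class written $\beta(z)$, and in general $B\iota_n^*(\bar x)=\lambda\tau(ub-av)\neq0$. Set $z_{p,k}=\Pa^{p^k}\cdots\Pa^1(\bar x)$, $y_{p,k}=\bock(z_{p,k})$, and $\upsilon_{p,k}=\bock\,\Pa^{p^k}\cdots\Pa^1\beta(\zeta)$ for the étale class $\zeta\in H^{2,2}(B_{\acute et}PGL_n,\Z/p)$ playing the role of the one used over $BPGL_p$. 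The same bidegree bookkeeping as in the prime case — $\Pa^i$ raises the topological degree by $2i(p-1)$ and the motivic weight by $i(p-1)$, while $\bock$ raises the topological degree by one — places these classes in $H^{2p^{k+1}+1,p^{k+1}}(BPGL_n,\Z/p)$, $H^{2p^{k+1}+2,p^{k+1}}(BPGL_n)$ and $CH^{p^{k+1}+1}(B_{\acute et}PGL_n)$ respectively.

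The non-triviality is then formal: the power operations $\Pa^i$, the mod-$p$ Bockstein $\beta$ and the integral Bockstein $\bock$ are all natural, so $B\iota_n^*$ (and its étale analogue) commutes with them, and applying $B\iota_n^*$ to $z_{p,k}$ gives $\Pa^{p^k}\cdots\Pa^1(\lambda\tau(ub-av))=\lambda\tau^{p^{k+1}}(u^{p^{k+1}}b-av^{p^{k+1}})\neq0$ by the same computation (using \cite[Proposition 9.7, Lemma 9.8 and Lemma 9.9]{voevodsky.reduced}) as over $BPGL_p$; hence $z_{p,k}\neq0$, and $y_{p,k}\neq0$ since its mod-$p$ reduction $\beta(z_{p,k})$ maps to $\lambda\tau^{p^{k+1}}(u^{p^{k+1}}v-uv^{p^{k+1}})\neq0$, and similarly for $\upsilon_{p,k}$ over $B_{\acute et}PGL_n$. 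Finally $z_{p,k}$ is $p$-torsion as an element of a $\Z/p$-module, while $y_{p,k}$ and $\upsilon_{p,k}$ lie in the image of the integral Bockstein — the connecting map of $0\to\Z\xrightarrow{\cdot p}\Z\to\Z/p\to0$ — whose image is annihilated by $p$. The standing hypotheses ($p$ odd, $\mathrm{char}\,k\nmid n$, $k\supset\mu_n$) are inherited from the $BPGL_p$ statements.

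The real content is confined to the first paragraph, namely checking that the Kronecker embedding descends to $PGL_p\to PGL_n$ and induces the identity on the relevant copies of $\Gm$, so that $B\Delta^*(x)$ is precisely $x$ (and not a nonzero multiple of it) and $B\iota_n^*(x)\neq0$. One subtlety to keep in mind when $p^2\mid n$: one should feed the reduction $\bar x$ of $x$ into the construction directly rather than a class $z\in H^{2,1}(BPGL_n,\Z/p)$ with $\bock(z)=x$, which no longer exists once $n\neq p$; and for $\upsilon_{p,k}$ one must make sure the analogous class $\zeta$ over $B_{\acute et}PGL_n$ still restricts non-trivially to $B_{\acute et}(C_p\times\mu_p)$. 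Beyond these points everything is the naturality of motivic cohomology operations together with the non-vanishing calculations over $B(C_p\times\mu_p)$ already established in this section.
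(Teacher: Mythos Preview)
Your proposal is correct and follows precisely the paper's own approach: use the diagonal embedding $\Delta\colon PGL_p\to PGL_n$, check that $B\Delta^*(x)=x$, and then define the classes for $BPGL_n$ by applying the same Steenrod--Bockstein word to the mod-$p$ reduction of $x$, detecting non-triviality via $B\iota_n^*$ and the explicit computation in $H^{**}(B(C_p\times\mu_p),\Z/p)$. Your write-up is in fact more careful than the paper's one-paragraph argument, in that you spell out why $B\Delta^*(x)=x$ (the Kronecker lift to $GL$ is the identity on the central $\Gm$) and flag the point that for $n\neq p$ one must start from $\bar x$ rather than from a class $z$ with $\bock(z)=x$.
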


\footnotesize{
	}

\noindent {\scshape Fachbereich Mathematik, Technische Universit{\"a}t Darmstadt}\\
fabio.tanania@gmail.com

\end{document}